\documentclass[10pt]{article}
\usepackage{amsmath, amsfonts, amsthm, amssymb, verbatim,dsfont}
\usepackage{hyperref}
\usepackage{graphicx}
\usepackage[mathcal]{euscript}
\usepackage[applemac]{inputenc}
\usepackage{dsfont} 
\usepackage{mathrsfs}
\usepackage[usenames]{color}
\usepackage{tikz}
\usetikzlibrary{decorations.pathreplacing}

\DeclareGraphicsRule{*}{png}{*}{}

\usepackage[T1]{fontenc}%

\theoremstyle{plain}
        \newtheorem{theorem}{Theorem}[section]
        \newtheorem{proposition}[theorem]{Proposition}
        \newtheorem{lemma}[theorem]{Lemma}
        \newtheorem{corollary}[theorem]{Corollary}
\theoremstyle{definition}
        \newtheorem{definition}[theorem]{Definition}
        \newtheorem{remark}[theorem]{Remark}
\theoremstyle{plain}
        
\numberwithin{equation}{section}

\newcommand \be           {\begin{equation}}
\newcommand \ee            {\end{equation}}

\newcommand \Pbold           {\mathbf{P}} 
\newcommand \PP \Pbold

\newcommand \del           \partial
\newcommand \eps            \epsilon

\newcommand\ud{\, \mathrm {d}}

\DeclareMathOperator    \dive {\nabla\cdot}
%

%

%

%
%

%
%

%

%

\newcommand{\cref}{c_{\mathrm{ref}}}
\newcommand{\nn}{{\mathbf{n}}}

%

%

\def\XXint#1#2#3{{\setbox0=\hbox{$#1{#2#3}{\int_{\Omega}}$}
\vcenter{\hbox{$#2#3$}}\kern-.5\wd0}}

%
%
\let\oldmarginpar\marginpar
\renewcommand\marginpar[1]{\-\oldmarginpar[\raggedleft\footnotesize #1]%
{\raggedright\footnotesize #1}}
\def\build#1_#2^#3{\mathrel{
\mathop{\kern 0pt#1}\limits_{#2}^{#3}}}
\allowdisplaybreaks[1]

\hyphenation{phe-ro-mone}
\hyphenation{phe-ro-mones}

\begin{document}

\title{Analysis of a chemotaxis system modeling ant foraging}

\author{Ricardo Alonso$^1$, Paulo Amorim$^2$ and Thierry Goudon$^3$}
\footnotetext[1]{
Department of Mathematics, PUC-Rio, Rua Marqu\^es de S\~ao Vicente, 225 -Gavea, RJ - Brasil.  Email: {\tt ralonso@mat.puc-rio.br}}
\footnotetext[2]{
Instituto de Matem\'atica, Universidade Federal do Rio de Janeiro,
Av. Athos da Silveira Ramos 149,
Centro de Tecnologia - Bloco C,
Cidade Universit\'aria - Ilha do Fund\~ ao,
Caixa Postal 68530, 21941-909 Rio de Janeiro,
RJ - Brasil. Email: {\tt paulo@im.ufrj.br}. Web page: \url{http://www.im.ufrj.br/~paulo/}}
\footnotetext[3]{Inria,  Sophia Antipolis M\'editerran\'ee Research Centre, Project COFFEE
\& Univ. Nice Sophia Antipolis, CNRS, Labo. J.-A. Dieudonn\'e, UMR 7351,
Parc Valrose, F-06108 Nice, France. Email: {\tt thierry.goudon@inria.fr}}

\date{}

\maketitle 


\begin{abstract} 
In this paper we analyze a 
 system
of PDEs recently introduced
in [P. Amorim, {\it Modeling ant foraging: a {chemotaxis} approach with pheromones and trail formation}], in order  to describe the dynamics of ant foraging.
The system is made of convection-diffusion-reaction equations, and the coupling is driven by chemotaxis mechanisms.
  We establish the well-posedness for the model, 
  and investigate the regularity issue for a large class of integrable data.
  Our main focus is on the (physically relevant) two-dimensional case with boundary conditions, where we prove that the solutions remain bounded for all times. 
  The proof  involves a series of fine \emph{a priori} estimates in Lebesgue spaces.  
\end{abstract}

{\small\noindent
{\bf Keywords.}  Ant foraging, Chemotaxis,  Animal movement, Reaction-diffusion equations.
\\[.3cm]

\noindent{\bf Math.~Subject Classification.}  92D25, 
92D50   
92C17,   	
35K55   
35B65   
}

\section{Introduction}
One of the most interesting phenomena arising in the collective behavior of ants is the formation of trails. Indeed, while each individual ant has a very limited cognitive ability, the population as a whole is capable of complex, organized collective behavior, such as brood rearing, waste management or fungus gardening. Even more striking is the fact that many of these activities, especially trail formation (which occurs during foraging, migration, or aggression), are essentially leaderless and yet highly organized. 

Many tools have evolved in ant societies to allow for this sort of complex, so-called \emph{emergent} behavior. One of the most important is the use of pheromones as a means of communication between individuals. Pheromones are chemical compounds secreted by ants (and many other species of animals) which are used to convey information between individuals and to signal different states, such as presence of food or alarm. Each pheromone triggers a corresponding behavior in individuals:  when an ant detects alarm pheromone, it becomes itself alarmed and secretes more pheromone, leading to a chain reaction among the population, whose effect is to elicit an apparently organized defense response. 

We are interested here in the trail-following behavior of ants, which is triggered in part by trails  of pheromones. When an individual ant, foraging at random, encounters a food source, it typically travels back to the nest leaving on the substrate a trail of pheromone. When other ants encounter the trail, they follow it in the direction of the food, and upon finding the food, they head back to the nest and deposit more pheromone. Thus, as long as food is available, the trail will be reinforced and the food will be removed efficiently. Conversely, when the food is exhausted, the evaporation and diffusion of the pheromone quickly erase the trail, when it is no longer being reinforced.

The dynamics of ant foraging behavior has recently come under increased interest from mathematicians trying to find suitable frameworks in which to analyze this behavior. We refer the reader  to the recent works \cite{Amorim,Bertozzi2014,Boissard2012,Ryan}, among others. In \cite{Amorim} (see also the independent work \cite{Bertozzi2014}), 
a system of PDEs, see  \eqref{1000} below, is introduced in order to describe the dynamics of ant foraging.
Roughly speaking, the population splits into two parts: the ants searching for food, and the ants going back to the nest, and the pheromone production is interpreted as a chemotactic mechanism 
that drives the population to privileged 
directions. The discussion on the modeling 
issues in \cite{Amorim} is complemented by a set of convincing numerical simulations 
that illustrate the ability  
of the model to reproduce relevant  behaviors of ant populations.
Here, we aim at analysing the mathematical properties of these equations.

More generally, finding relevant models able to reproduce the formation of  the space-time heterogeneous patterns
 observed in life sciences is 
becoming a very active field, particularly motivated by the landmark contributions of  T. Vicsek {\it et al} \cite{Vic}, and F. Cucker and S. Smale \cite{CuS2, CuS1}
about the formation of flocks in large populations of birds or fish.
The key feature relies on  the transmission by  the individuals of the information contained in their close  environment, so that the
 whole population organizes according to remarkable patterns.
 In this vein, various models have been introduced, which have led to
 original problems for mathematical analysis and  fascinating numerical simulations that reproduce certain features of natural phenomena; we refer the reader
 to the surveys 
 \cite{Coll, Vic2} for an overview on the subject.
Here, following \cite{Amorim}, we are interested in continuum models, where populations are described by their local concentrations.
The interaction between the individuals can be though of through a certain potential, which is defined self--consistently, thus depending on the variations of the 
concentrations.  
This is reminiscent to the theory of chemotaxis, which dates back  to C.~Patlak \cite{Patlak} and E.~Keller--L.~Segel \cite{KellerSegel70,KellerSegel71} to   model
the behavior of certain bacteria and slime molds, which are attracted to chemical substances that they themselves secrete. 
In particular, the possible formation  in finite time of singularities in the solutions of the Keller--Segel equations, the concentration into Dirac masses corresponding to the aggregation of the population into a single location, has motivated a huge amount of mathematical works, see for instance \cite{GA, JL, RZ}. 
By now, the mathematical theory of the Keller--Segel system 
 is well established, and we refer the reader to the surveys \cite{HillenPainter2009,Horstmann1,Horstmann2} for further information and references.
 To describe interaction mechanisms between living organisms by chemotactic principles has been successfully 
 adapted to many different situations, see e.~g.~\cite{Kol1,Fasano,Kol2,GNRR,Horst2,MKB,TW}  to mention  a few.

To be more specific, in this paper, we study the basic mathematical theory for nonnegative solutions $(t,x)\mapsto (u,w,p,c)(t,x)$ of the following model for ant foraging
\be
\label{1000}\tag{\bf {SPD}}
\left\{
\begin{aligned}
&\partial_t u -  \Delta u + \dive\big( u\, \chi\nabla p  \big) = - uc + w N,
\\[.3cm]
& \partial_t w - D_w \Delta w + \dive\big( w\,  {\nabla v}   \big) =  uc - w N,
\\[.3cm]
& \partial_t p -  D_p \Delta p = P w - \delta p,
\\[.3cm]
& \partial_t c = - u\, c.
\end{aligned} \right.
\ee
In this system of PDEs, the unknowns are
\begin{itemize}
\item[$\bullet$] the density of foraging ants $u$,
\item[$\bullet$] the density of returning ants $w$:
it describes the  ants which have found  food and are returning to the nest,
\item[$\bullet$] the concentration of the pheromone $p$,
\item[$\bullet$] the distribution of the food $c$.
\end{itemize}
These nonnegative quantities depend on  time $(t\geq 0)$ and space $(x\in\Omega\subset \mathbb R^n)$ variables.
The data of the problem are
\begin{itemize}
\item[$\bullet$] the site of the nest, embodied into the function $x\mapsto N(x)$,
\item[$\bullet$] a function $x \mapsto P(x)$ that describes the  pheromone deposition as returning ants approach the nest; typically this function decreases as the distance to
the nest decreases,
\item[$\bullet$] a nest-bound vector field $x\mapsto \nabla v(x)$ representing the speed of the ants when returning to the nest
(it might contain information on the topography, obstacles...),
\item[$\bullet$] Diffusion, sensitivity and evaporation coefficients $D_w,D_p, \chi,$ and $\delta$, which  are all positive constants.
\end{itemize}
The system \eqref{1000} will be addressed in the sequel as the \textbf{Slow  Pheromone Diffusion}  model as the pheromone diffusion time scale is comparable to that of the dynamics of the ant foraging.
We refer the reader to \cite{Amorim} for  details on the biological motivation for system \eqref{1000}.
In what follows, we will take $P\equiv 1$ and  $D_p=D_w= \chi =1$ for the sake of simplicity. 
We will study the system \eqref{1000} on the whole Euclidean spatial domain $\mathbb{R}^{2}$  with a few comments applying to  dimension $n=3$.

We will also work with the simplified situation where the pheromone diffusion time scale is small compared to the dispersal of the ants. In addition, we suppose that there exists a very abundant, or renewable, food source $0\leq c:=c(x),$ so that we can assume it is a given function of space.
These simplifying assumptions lead us to the following reduced system, with unknowns
$(t,x)\mapsto (u,w,p)(t,x)$
\be
\label{2000}\tag{\bf{FPD}}
\left\{
\begin{aligned}
&\partial_t u -  \Delta u + \dive\big( u\, \nabla p \big) = - uc + w N,
\\[.3cm]
& \partial_t w -  \Delta w + \dive\big( w\,  {\nabla v}   \big) =  uc - w N,
\\[.3cm]
& -  \Delta p =  w - \delta p .
\end{aligned} \right.
\ee
 It will be referred to  as the \textbf{Fast Pheromone Diffusion}  model.
This system will be considered in a  domain $\Omega\subset\mathbb{R}^{2}$ having a smooth boundary $\del\Omega$. 
In order to conserve  the total mass of ants, we impose the following zero-flux boundary conditions
\be
\label{2500}
\aligned
 \nabla u \cdot \nn\Big|_{\partial\Omega} = (w\nabla v- \nabla w) \cdot \nn\Big|_{\del\Omega}  = \nabla p \cdot \nn \Big|_{\del\Omega} = 0,
\endaligned
\ee
where $\nn$ stands for the outward unit normal vector to $\del\Omega$.  
While this is not crucial for most of the analysis, we can bear in mind the fact that 
physically relevant velocity fields satisfy
\begin{equation}\label{nv+}
{\nabla v} \cdot \nn\big|_{\del\Omega} \leq  0\end{equation} since $\nabla v$ is  pointing towards the nest.  Assumption \eqref{nv+} will play a role when proving uniform propagation of the $L^{\infty}$--norm for solutions, namely, estimate \eqref{Bound1} and later, for the local existence of classical solutions.  Therefore,  ants do not escape the domain $\Omega$.   The initial data 
\be
\label{CIFPD}
u\Big|_{t=0}=u_o,\qquad w\Big|_{t=0}=w_o
\ee
will be assumed, naturally, as nonnegative integrable functions.  As a consequence, we have
\be
\label{MCFPD}
\displaystyle\int_\Omega (u+w)(t,x)\ud x=\displaystyle\int_\Omega (u_o+w_o)(x)\ud x.
\ee
We shall 
prove the global well-posedness in dimension $n=2$ for
both models \eqref{1000} and \eqref{2000}. In fact, weak solutions of such systems are bounded
in $(0,T]\times\Omega$ for any $T>0$.  
This is in contrast with the situation known for the usual  
Keller--Segel system
\be
\label{KS}\left\{
\aligned
&\partial_t \rho+\dive(\rho\nabla\Phi)-\Delta \rho=0,
\\[.3cm]
&-\Delta \Phi=\rho.
\endaligned\right.
\ee
It is indeed well--known that, if the initial mass $\int \rho(0,x)\ud x$ exceeds a certain threshold (which depends on the domain and the space dimension $n\geq 2$),
then the solution of \eqref{KS} blows up: $\|\rho(t,x)\|_\infty \to\infty$ as $t\to T^\star<\infty$, typically exhibiting a concentration to a certain location.
Roughly speaking, this effect can be seen as a result of the competing effects between diffusion, and the explosive behavior of the ODE $\dot y=y^2$ (which is obtained neglecting diffusion and looking
at the solution $\rho$ evaluated along the characteristics associated to the field $\nabla\Phi$).
At first sight, the structure of the systems  \eqref{1000} and \eqref{2000} is quite close to that of the Keller--Segel system \eqref{KS} 
and one may wonder whether of not such threshold phenomena occur to produce the blow up of the solutions.
However, we shall see that  the introduction of an additional population (the returning ant population $w$), which is itself 
subjected to a regularizing  parabolic equation, prevents the blow up formation.

The paper is organized as follows.  In Section \ref{2} we set up  the assumptions necessary to the analysis  and we state the main results. 
  Section \ref{3} is devoted to  the model \eqref{2000} and it is divided in four subsections for clarity.  The first and second subsections present the core of the analysis where increasing integrability of the solutions is shown by means of \textit{a priori} estimates.  The initial step is to prove instantaneous generation of $L^{\gamma}$-integrability. To be more specific, 
  we prove that $L^{1}$ initial data will lead to solutions lying in
$L^\infty\big(t_{\star},T;L^{\gamma}(\Omega)\big)$  for \text{any} $\gamma\in(1,\infty)$ and any positive times $0<t_{\star}\leq T\leq \infty$.  The bounds given for the $L^{\gamma}$-norm of the solution will depend on the \textit{structure of the system}, that is, on the data $N$, $c$, $v$ and more importantly, on the conserved quantity, that is 
   the initial mass of the total ant population $m_o=\int u_o\ud x+ \int w_o\ud x$.  Such bounds are independent of the time existence interval, a fact that can be used to
   justify that the solutions are globally defined.  Furthermore, it is  possible  to use such a result  to prove that solutions are actually uniformly bounded for any positive time. The proof of this
    fact relies on the De Giorgi energy level set method, \cite{DeG}.  The third subsection is concerned with 
    the global existence of classical solutions for the problem.  
     Given smooth initial data,  local in time existence  of solutions can be justified by using the classical Banach fixed-point theorem on suitable metric spaces. Next, the global \textit{a priori} estimates which are valid for classical solutions allow us to justify that the lifespan of these solutions is actually infinite. 
      We finish Section~\ref{3} by presenting a global well-posedness theorem  for weak solutions of \eqref{2000}
     with initial data in $L^{1}\cap L^{\gamma}$ with $\gamma>2$.  The proof uses approximation by the classical solutions just found.  Finally, in Section \ref{4} we  discuss  the well-posedness of the system \eqref{1000}. Our approach  uses elementary properties of the heat kernel which is, somehow, a different approach than the one used for the analysis of     \eqref{2000}.

\section{Notations, hypotheses and main results}\label{2}
The main results of this paper are concerned with the well-posedness of weak solutions of the  system \eqref{2000}.
\begin{definition}\label{dws}
We say that the triple $(u,w,p)$ is a \emph{weak solution} of  the system \eqref{2000} if it satisfies:
\begin{itemize}
\item [\it(i)] $(u,w)\in L^{2}\big(0,T; H^{1}(\Omega)\big)$ and $(\partial_t u ,\partial_t w )\in L^{2}\big(0,T; H^{1}(\Omega)^{\star}\big)$.
\item [\it(ii)] Equations \eqref{2000} are solved in the 
sense that, for any test function $\zeta\in C^\infty([0,\infty)\times\Omega)$, compactly supported in $[0,T)\times \overline\Omega$,
we have
\[\begin{array}{l}
{
\displaystyle\int_0^T\displaystyle\int_\Omega \big(-u\partial_t\zeta}
+(\nabla u-u\nabla p )  \cdot\nabla\zeta  \big)(t,x)\ud x\ud t- \displaystyle\int_\Omega u_o(x)\zeta(0,x)\ud x
\\[.3cm]
\qquad\qquad=\displaystyle\int_0^T\displaystyle\int_\Omega (-uc+wN)\zeta(t,x)\ud x\ud t,
\\[.3cm]
{\displaystyle\int_0^T\displaystyle\int_\Omega \big(-w\partial_t\zeta}
+(\nabla w-w\nabla v )   \cdot\nabla\zeta  \big)(t,x)\ud x\ud t- \displaystyle\int_\Omega w_o(x)\zeta(0,x)\ud x
\\[.3cm]
\qquad\qquad=\displaystyle\int_0^T\displaystyle\int_\Omega (uc-wN)\zeta(t,x)\ud x\ud t,
\\[.3cm]
\displaystyle\int_\Omega \nabla p\cdot\nabla \zeta\ud x=\displaystyle\int_\Omega (w-\delta p)\zeta\ud x.
\end{array}\]
\end{itemize}
\end{definition}
\noindent
\textbf{Main hypotheses.} The following assumptions will  be used throughout this Section:
\begin{itemize}
\item [\it(i)] The initial data $(u_o, w_o)$ is nonnegative with finite mass:
\begin{equation}\label{FinMass}
\int_{\Omega} u_o(x) 
 + \int_{\Omega} w_o(x) \ud x = m_o<\infty\,.
\end{equation}
\item [\it(ii)] The parameters of the  system  \eqref{2000}  are such that
\begin{equation}\label{HypH}\tag{\bf H}
\big(N,\,c,\, \nabla v,\, \Delta v\big) \in L^{\infty}(\Omega).
\end{equation}
The results can be extended under weaker assumptions on $v$ at the price of  more involved technicalities  in the estimates.  As a convention, let us agree here that when a constant is referred to depend on \eqref{HypH} it means that this constant  depends on the $L^{\infty}$-norms of $N$, $c$, $\nabla v$ and $\Delta v$.
\item [\it(iii)] The domain $\Omega$ is of class $C^{2}$.
\end{itemize}
In what follows we will use the shorthand notation $\gamma^{\pm}$ to denote a number close but strictly bigger/smaller that $\gamma$.  
Having all these in mind let us gather the main results regarding  \eqref{2000} in one single statement.
\begin{theorem}\label{TFINAL}
Let $\delta>0$ be fixed. Let  $(u_o,w_o)$ be a  pair of nonnegative functions in $L^{1}\cap L^{2^{+}}(\Omega)$.  Then, there exists a unique nonnegative weak solution to  \eqref{2000}.  In the case $\delta=0$ uniqueness continues holding up to a constant in the pheromone $p$ distribution.
Furthermore, the following estimates are satisfied by the solutions for any $0<t\leq T<\infty$:
\begin{itemize}
\item [\it(i)] $L^{\gamma}$-integrability 
\begin{equation*}
\|w(t)\|_{\gamma} + \|u(t)\|_{\gamma} \leq C(m_o,\gamma)\Big(1 + \frac{1}{t^{(1/\gamma')^{+}}}\Big)\,,\quad\text{for any $ \gamma\in[1,\infty)$},
\end{equation*}
\item [\it(ii)] $L^{\infty}$-bound
\begin{equation*}
\|w(t)\|_{\infty} + \|u(t)\|_{\infty} \leq C(m_o)\Big(1 + \frac{1}{t^{1^{+}}}\Big)\,.
\end{equation*}
The constants in $(i)$ and $(ii)$ depend  on \eqref{HypH} but do not depend on $T$.
\item [\it(iii)] If, in addition, the initial data are in $L^\gamma(\Omega)$ for $\gamma \in (1,\infty]$, then the previous estimates are improved to
\begin{equation*}
\|w(t)\|_{\gamma^+} + \|u(t)\|_{\gamma} \leq C'(m_o,\gamma),
\end{equation*}
\end{itemize}
where the constant $C'$ depends on \eqref{HypH} and on $\|w_o\|_{\gamma^+}, \|u_o\|_{\gamma}$ but not on $T$.
\end{theorem}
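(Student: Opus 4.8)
The plan is to build the solution from the bottom up: first classical solutions for smooth data via a fixed-point argument, then pass to the limit using the a priori estimates, and finally establish uniqueness. Let me sketch the key steps.

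Step 1: A priori estimates. The heart of the matter is the chain of integrability estimates (i) and (ii). The strategy is to derive these for classical solutions (so all manipulations are justified) with constants depending only on $m_o$ and the structure (H). For (i), the approach is a bootstrap in $\gamma$: multiply the $u$-equation by $u^{\gamma-1}$ and the $w$-equation by $w^{\gamma-1}$, integrate, and integrate by parts. The diffusion terms produce $\tfrac{4(\gamma-1)}{\gamma^2}\|\nabla u^{\gamma/2}\|_2^2$ (and similarly for $w$); the drift terms $\dive(u\nabla p)$ and $\dive(w\nabla v)$ are controlled by the diffusion plus lower-order terms after using $-\Delta p = w - \delta p$ (so $\|\nabla p\|$-type quantities are controlled by $\|w\|$ via elliptic regularity) and the $L^\infty$ bound on $\nabla v, \Delta v$; the reaction terms $-uc+wN$ and $uc-wN$ are benign because $c,N$ are bounded and the cross terms partially cancel when one adds the two inequalities. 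The resulting differential inequality has the form $\tfrac{d}{dt}\big(\|u\|_\gamma^\gamma + \|w\|_\gamma^\gamma\big) + (\text{coercive } H^1\text{-terms}) \le C\big(\|u\|_{\gamma}^\gamma + \|w\|_\gamma^\gamma\big)^{\theta}\cdot(\text{lower } L^p\text{ norms}) + \dots$, and one closes it using the Gagliardo–Nirenberg inequality in dimension $2$ to absorb superlinear terms into the gradient term, interpolating against the conserved $L^1$ mass $m_o$. Starting from $L^1$, one first reaches any $L^\gamma$ for $t>0$ with the stated singular-in-$t$ rate $t^{-(1/\gamma')^+}$ (this is the instantaneous smoothing; the exponent comes from the hypercontractivity-type estimate for the heat semigroup, tracked carefully through the iteration). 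Item (iii) is the same computation but starting the differential inequality at $t=0$ with finite initial $L^\gamma$ data, so no blow-up as $t\to 0$.

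Step 2: The $L^\infty$ bound (ii). Once $u,w \in L^\infty(t_\star, T; L^\gamma)$ for all $\gamma<\infty$ with $T$-independent constants, upgrade to $L^\infty$ via the De Giorgi level-set / Stampacchia iteration: for a truncation level $k$, test the equation for $(u-k)_+$ against itself, use that on the set $\{u>k\}$ the drift and reaction contributions are controlled (here one uses $\nabla v\cdot\nn \le 0$ on $\partial\Omega$ so the boundary term from integrating by parts the $w\nabla v$ drift has a good sign), and set up the recursion on the energies $\int (u-k_n)_+^2$ with $k_n \uparrow$; the nonlinearity in the recursion is supercritical enough (thanks to the high $L^\gamma$ integrability already in hand) that the sequence collapses to zero, giving $\|u\|_\infty \lesssim C(m_o)(1 + t^{-1^+})$, uniformly in $T$. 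The $w$ and $p$ bounds follow likewise (for $p$, elliptic regularity from $-\Delta p + \delta p = w \in L^\infty$).

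Step 3: Existence. For smooth, compatible data one gets a local-in-time classical solution by Banach fixed point on a parabolic Hölder (or $L^2(0,\tau;H^1)$) space: freeze $p$ and $v$-drifts, solve the two linear parabolic equations, solve the elliptic equation for $p$, and show the map is a contraction on a short time interval — standard, using that the coefficients stay bounded. The $T$-independent a priori estimates from Steps 1–2 then preclude finite-time blow-up of any norm, so these classical solutions are global. For general $(u_o,w_o) \in L^1 \cap L^{2^+}$, approximate by smooth nonnegative data, obtain the corresponding global classical solutions $(u_n,w_n,p_n)$, and pass to the limit: the a priori bounds give weak compactness in $L^2(0,T;H^1)$, the equations give $\partial_t u_n, \partial_t w_n$ bounded in $L^2(0,T;(H^1)^\star)$, Aubin–Lions yields strong $L^2$ convergence, which suffices to pass to the limit in the nonlinear drift terms (the product $u_n \nabla p_n$ converges since $\nabla p_n$ is compact in $L^2$ by elliptic regularity and $u_n$ converges strongly); nonnegativity and the mass identity pass to the limit. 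This produces the weak solution, and the estimates (i)–(iii), being uniform in $n$, survive.

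Step 4: Uniqueness. Take two weak solutions with the same data, write equations for the differences $U = u_1 - u_2$, $W = w_1 - w_2$, $Q = p_1 - p_2$, test with $U$ and $W$, and estimate. The dangerous term is $\dive(u_1 \nabla p_1) - \dive(u_2 \nabla p_2) = \dive(U \nabla p_1) + \dive(u_2 \nabla Q)$; the first piece is handled using $u_1 \in L^\infty$ (from (ii)) via Young's inequality against $\|\nabla U\|_2$, the second using $\|\nabla Q\|_{?}$ controlled by $\|W\|_2$ through elliptic regularity and $u_2 \in L^\infty$. One arrives at $\tfrac{d}{dt}(\|U\|_2^2 + \|W\|_2^2) \le C(t)(\|U\|_2^2 + \|W\|_2^2)$ with $C(t)$ integrable (it carries the $t^{-1^+}$ singularity from the $L^\infty$ bound near $t=0$, which is still integrable), so Grönwall gives $U=W\equiv 0$, hence $Q\equiv 0$ when $\delta>0$ and $Q$ constant when $\delta=0$.

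The main obstacle is Step 1: closing the $L^\gamma$ differential inequalities with constants that genuinely do not depend on $\gamma$ in a way that blows up, and tracking the precise time-singularity exponent $(1/\gamma')^+$ through the iteration — this requires careful use of Gagliardo–Nirenberg with sharp interpolation exponents in two dimensions and a delicate bookkeeping of how the elliptic estimate for $\nabla p$ in terms of $w$ feeds back into the $u$-equation. Everything downstream (De Giorgi, fixed point, compactness, Grönwall) is comparatively routine once these estimates are in place.
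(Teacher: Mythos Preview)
Your architecture matches the paper's: energy estimates $\to$ De Giorgi $\to$ local classical existence via fixed point $\to$ global extension via the a priori bounds $\to$ weak solutions by approximation $\to$ uniqueness by Gr\"onwall. Two points need correction.

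In Step~1 you test both equations with the \emph{same} power $\gamma-1$. This does not close. The paper's device is to multiply the pheromone equation by $u^\gamma$ and integrate by parts, yielding the clean inequality $\int_\Omega \nabla p\cdot\nabla(u^\gamma)\,\ud x\le\int_\Omega w\,u^\gamma\,\ud x$; after Young, the right-hand side of the $u$-inequality then contains $\int w^{\gamma+1}$, which an $L^\gamma$ energy for $w$ cannot absorb. The fix is to test the $w$-equation with $w^{\alpha-1}$ for some $\alpha\in(\gamma,\gamma+1)$ and run the coupled ODE for $Z=\|u\|_\gamma^\gamma+\|w\|_\alpha^\alpha$, closing via Lebesgue interpolation and a comparison lemma. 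This exponent mismatch is not cosmetic: it is what makes the loop close, and it is the source of the $\gamma^+$ on $w$ in item~(iii).

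In Step~4 you write that the Gr\"onwall coefficient ``carries the $t^{-1^+}$ singularity \dots\ which is still integrable''. It is not: $t^{-1^+}$ blows up non-integrably at $0$. The argument works because, under the hypothesis $(u_o,w_o)\in L^{2^+}$, one has the \emph{improved} short-time bounds $\|u(t)\|_\infty+\|w(t)\|_\infty\le C(1+t^{-1/2})$ and $\sup_t\|\nabla p(t)\|_\infty<\infty$ (elliptic regularity plus the uniform $L^{2^+}$ control on $w$). With these, the dangerous cross term $\|u_2\,\nabla(p_1-p_2)\|_2^2$ is bounded by $C\big(1+t^{-(1/2)^+}\big)\|w_1-w_2\|_2^2$, and \emph{that} exponent is integrable. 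This is exactly why the $L^{2^+}$ assumption on the data appears in the statement.

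A minor methodological difference: for the approximation step the paper uses the stability estimate itself to show the sequence of classical solutions is Cauchy in $L^\infty(0,T;L^2)$, rather than Aubin--Lions; either route works.
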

\begin{remark}
Once the $L^\infty$ estimate has been established, it can be used to investigate further regularity of the solution.
In particular, it implies that $\nabla u$ and $\nabla p$ are bounded
on $[t_\star,T]\times \Omega'$, for any $0<t_\star<T$ and any domain $\Omega'$ strictly included in $ \Omega$;
see  \cite[Th. VII.6.1]{LUS}.
Assuming that $N, c, v$ are $C^\infty$, we can boil down 
 a bootstrap argument to show that the solution $(u,w)$ is actually of class $C^\infty$ in any such subdomain $[t_\star,T]\times \Omega'$, see for instance \cite[Prop. A.1 \& Th. A. 1]{GV}.
\end{remark}
\section{Analysis of the  model \eqref{2000}}
\label{3}%
In this section we provide a series of \textit{a priori} estimates to build up enough regularity to prove the existence of classical solutions of
  \eqref{2000}.
\begin{definition} A \emph{classical solution} $(u,w,p)$ of the system \eqref{2000} is defined as a triple $(u,w,p)$ satisfying the following:
\begin{itemize}
\item [\it(i)] The triple $(u,w,p)\in C\big([0,T];L^{2}(\Omega)\big)$ and each of the terms in the  system  \eqref{2000}  (i.e. $\partial_t u ,\, \Delta u,\, \nabla\cdot(u\,\nabla p),$ and so forth) are well defined functions in $L^{2}\big((0,T)\times\Omega\big)$,
\item [\it(ii)] The equations \eqref{2000} are satisfied almost everywhere,
\item [\it(iii)] The initial data $(u,w)\big|_{t=0}=(u_o,w_o)$ and the boundary condition \eqref{2500} are satisfied almost everywhere.  
\end{itemize}
\end{definition}
\subsection{From $L^{1}$ to $L^{\gamma}$ regularity.}
In this section we prove instantaneous generation of $L^{\gamma}$ integrability, with $\gamma>1$, for nonnegative classical solutions of \eqref{2000} 
 associated to  initial data lying in $L^{1}$ only.  A crucial fact used in the argument is mass conservation, that is, classical solutions $\big(u(t),w(t)\big)$ satisfy \eqref{MCFPD} that we rewrite as
\begin{equation*}
\int_{\Omega} u(t,x)\ud x + \int_{\Omega} w(t,x)\ud x=\int_{\Omega} u_o(x)\ud x+ \int_{\Omega} w_o(x)\ud x=m_o \;\; \text{ for any }\; t>0.
\end{equation*}
\begin{proposition}\label{T1000}
Let $(u,w)$ be a classical nonnegative solution of the   system \eqref{2000} with boundary conditions \eqref{2500}.  Then, for any $\gamma\in[1,\infty)$ we have the estimate
\begin{equation*}
\|w(t)\|_{\gamma}  + \|u(t)\|_{\gamma} \le C(m_o,\gamma)\Big(1+\frac{1}{t^{(1/\gamma')^{+}}}\Big)\,,\quad t>0\,,
\end{equation*}
where the  constant $C(m_o,\gamma)$ depends additionally on \eqref{HypH}, but, is independent of time.  Furthermore, for any $\gamma\in[1,\infty)$ this estimate can be upgraded, provided we add the dependence of the integrability of the initial data to the constant, to
\begin{equation*}
\int_{\Omega} w^{\gamma^{+}} (t)  \, \ud x + \int_{\Omega} u^\gamma (t)  \ud x\leq C(m_o, \|u_o\|_{\gamma}, \|w_o\|_{\gamma^{+}})\,,\quad t>0\,.
\end{equation*}
The constant depends  on \eqref{HypH} (but is independent of time).
\end{proposition}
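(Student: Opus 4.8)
The strategy is an energy estimate on the functions $u^\gamma$ and $w^\gamma$ (or, more precisely, on $u^{\gamma-1}$ and $w^{\gamma-1}$ as Lebesgue-space multipliers), combined with a bootstrap on the exponent $\gamma$ and with smoothing properties of the elliptic equation for $p$. First I would multiply the $u$-equation by $\gamma u^{\gamma-1}$ and integrate over $\Omega$; using the zero-flux boundary conditions \eqref{2500}, the diffusion term produces $-\tfrac{4(\gamma-1)}{\gamma}\int|\nabla u^{\gamma/2}|^2$, the drift term $\dive(u\nabla p)$ integrates by parts against $\nabla(u^{\gamma-1})$ to yield, after another integration by parts in which $-\Delta p = w-\delta p$ is used, a term controlled by $\int u^\gamma(w-\delta p) \le \int u^\gamma w$ (dropping the favourable sign), and the reaction term $-uc+wN$ is bounded using \eqref{HypH} by $C\int(u^\gamma + w\,u^{\gamma-1}) \le C\int(u^\gamma + w^\gamma)$. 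I would do the analogous computation for $w$, where the field is the \emph{given} $\nabla v$, so integration by parts gives $\int w^\gamma \Delta v \le \|\Delta v\|_\infty \int w^\gamma$ and, crucially, a boundary term $\int_{\partial\Omega} w^\gamma\,\nabla v\cdot\nn$ which is $\le 0$ by \eqref{nv+} and hence harmless. The bad term on both sides is the cross term $\int u^\gamma w$ (and $\int w^\gamma u$ by symmetry of the reaction), which is the coupling that must be tamed.

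The key point is then to absorb $\int u^\gamma w$ using the parabolic dissipation $\int|\nabla u^{\gamma/2}|^2$ together with Gagliardo–Nirenberg–Sobolev in dimension two, at the cost of a low norm of $w$ that is controlled at the previous stage of the bootstrap. Writing $f = u^{\gamma/2}$, one has $\int u^\gamma w = \int f^2 w \le \|w\|_{q} \|f\|_{2q'}^2$ for suitable Hölder exponents; in $n=2$ the embedding $\|f\|_{2q'}^2 \le \varepsilon\|\nabla f\|_2^2 + C_\varepsilon\|f\|_1^2$ holds when $q$ is chosen appropriately (any $q>1$ works, with $q'$ finite), and $\|f\|_1 = \|u^{\gamma/2}\|_1 = \|u\|_{\gamma/2}^{\gamma/2}$ is controlled inductively. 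Thus, \emph{assuming} $L^{\gamma/2}$-type bounds on $u$ and $w$ (the induction hypothesis), I can choose $\varepsilon$ small to absorb the gradient term into the dissipation and close a differential inequality of the form $\frac{d}{dt}\big(\|u\|_\gamma^\gamma + \|w\|_\gamma^\gamma\big) + c\big(\|\nabla u^{\gamma/2}\|_2^2 + \|\nabla w^{\gamma/2}\|_2^2\big) \le C\big(\|u\|_\gamma^\gamma + \|w\|_\gamma^\gamma\big) + C$. The bootstrap starts from $\gamma = 1$ (mass conservation \eqref{MCFPD}) and the step roughly doubles $\gamma$ each time, so finitely many iterations reach any prescribed finite exponent; the $\gamma^+$ versus $\gamma$ asymmetry in the statement (with $w$ enjoying a slightly better exponent) comes from the fact that it is $w$, not $u$, that feeds the elliptic equation and hence the drift in the $u$-equation, so one needs a touch more integrability on $w$ than on $u$ at each level.

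To obtain the \emph{instantaneous generation} with the singular prefactor $t^{-(1/\gamma')^+}$ from $L^1$ data alone, rather than merely propagation of $L^\gamma$ bounds, I would use a time-weighted version of the energy inequality: multiply through by a power $t^{\alpha}$ and estimate $\frac{d}{dt}(t^\alpha \|u\|_\gamma^\gamma)$; the extra term $\alpha t^{\alpha-1}\|u\|_\gamma^\gamma$ is interpolated between the dissipation $\|\nabla u^{\gamma/2}\|_2^2$ and the lower-order norm supplied by the previous bootstrap stage, which produces the precise exponent $(1/\gamma')^+$ after optimizing. Standard ODE comparison (Grönwall, or a Bihari/Osgood-type argument for the nonlinear piece) then yields the stated bound uniformly in $T$, because the constant $C$ in the differential inequality depends only on $m_o$, on \eqref{HypH}, and on $\gamma$ — the mass being conserved means no quantity grows with $T$. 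The second, ``upgraded'' estimate with no $t$-singularity is the same computation but started from the assumed finite $\|u_o\|_\gamma, \|w_o\|_{\gamma^+}$, so no time weight is needed.

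The main obstacle I anticipate is bookkeeping the Hölder and Gagliardo–Nirenberg exponents so that (a) the gradient term really can be absorbed — this requires the space dimension to be exactly $2$, where the Sobolev conjugate of $2$ is ``infinite'', giving the needed slack — and (b) the low norm that appears is genuinely one controlled by the previous induction step and not a norm of equal or higher order; getting this self-consistent across the whole chain, while simultaneously tracking the $\gamma$ versus $\gamma^+$ discrepancy forced by the $w \to p \to \nabla p \to$ drift-in-$u$ pathway, is the delicate part. A secondary technical point is justifying all these manipulations for classical solutions (so that the integrations by parts, the use of $u^{\gamma-1}$ as a multiplier, and the boundary terms are all legitimate), but since the proposition is stated for classical solutions this is routine.
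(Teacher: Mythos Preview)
Your energy framework --- multiply by $u^{\gamma-1}$, integrate by parts, feed the elliptic equation for $p$ back in to turn the drift into $\int u^\gamma w$, and invoke two--dimensional Gagliardo--Nirenberg --- is exactly how the paper begins. But the mechanism you propose for \emph{closing} the estimate differs from the paper's, and as written it contains a gap at the ``uniformly in $T$'' claim.

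The paper does \emph{not} iterate on $\gamma$, nor does it use time weights. It couples $U=\int u^\gamma$ with $W=\int w^\alpha$ for a \emph{single} pair $\gamma<\alpha<\gamma+1$ (this choice is the origin of the $\gamma^+$). The key Gagliardo--Nirenberg inequality used is
\[
\int_\Omega \xi^{\alpha+1}\,\mathrm{d}x \;\le\; C\,\|\xi\|_1\,\|\xi^{\alpha/2}\|_{H^1}^2,
\]
which requires only the conserved mass $\|\xi\|_1\le m_o$, not a lower $L^{\gamma/2}$ norm from an earlier stage --- so no bootstrap is needed. After Lebesgue interpolation the system closes as a \emph{superlinear} scalar inequality
\[
Z'(t) + C_1\,Z(t)^{\alpha/(\alpha-1)} \le C_2,\qquad Z=U+W,
\]
and an ODE comparison lemma (Corollary~\ref{A2} in the appendix) yields $Z(t)\le C\big(1+t^{-(\alpha-1)}\big)$ directly, valid even for $Z(0)=+\infty$. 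Both the instantaneous smoothing and the uniform-in-$T$ bound come from the exponent $\alpha/(\alpha-1)>1$; the upgraded estimate is the same inequality read through Lemma~\ref{A1} with finite $Z(0)$.

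Your differential inequality $\frac{d}{dt}Z + c\,(\text{dissipation}) \le CZ + C$ is only linear on the right, and a plain Gr\"onwall argument gives $Z(t)\le e^{Ct}(Z(0)+1)$, which blows up with $T$. Your parenthetical ``Bihari/Osgood-type argument for the nonlinear piece'' is the right instinct, but you have not identified that piece: the only way to manufacture it is to convert the dissipation $\|\nabla u^{\gamma/2}\|_2^2$ into a power $Z^{\beta}$ with $\beta>1$ via the very GNS-plus-interpolation step above --- at which point the iteration and the time weights become superfluous. So your plan can be repaired, but only by importing the paper's decisive idea at the moment you try to close.
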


\proof
We start by multiplying equation \eqref{2000} by $ u^{\gamma-1}$, with $\gamma>1$, and integrating with respect to the space variable. We obtain
\begin{equation*}
\aligned
& \frac{1}{\gamma} \frac{\ud}{\ud t} \int_{\Omega} u^\gamma   \ud x - \frac{\gamma-1}{\gamma}\int_{\Omega} \nabla p \cdot \nabla (u^{\gamma})   \ud x
+ (\gamma-1) \int_{\Omega} u^{\gamma-2} |\nabla u|^2   \ud x
\\[.3cm]
& \qquad\qquad \qquad\qquad = - \int_{\Omega} u^\gamma c  \ud x + \int_{\Omega} N w u^{\gamma-1}  \ud x\,.
\endaligned
\end{equation*}
Now multiply the third (pheromone) equation by $u^\gamma$ and integrate by parts to conclude that
\begin{equation*}
\aligned
&  \int_{\Omega} \nabla p \cdot \nabla (u^{\gamma})  \, \ud x
 \le  \int_{\Omega} w u^\gamma  \ud x\,.
\endaligned
\end{equation*}
Next using
\begin{equation*}
\aligned
\int_{\Omega} u^{\gamma-2} |\nabla u|^2 \, \ud x
=  \frac{4}{\gamma^2}\int_{\Omega} |\nabla u^{\gamma/2}|^2  \ud x\,,
\endaligned
\end{equation*}
and the Young inequality with conjugate exponents $p=\gamma$,  $p'=\frac\gamma{\gamma-1}$, and
$p=\gamma+1$, $p'=\frac{\gamma+1}{\gamma}$, respectively,
we find 
\be\label{3000}\begin{array}{l}
\displaystyle\frac{\ud}{\ud t} \int_{\Omega} u^\gamma   \ud x
+  4 \frac{\gamma-1}{\gamma}\int_{\Omega} |\nabla u^{\gamma/2}|^2  \ud x
\\[.3cm]
\qquad\le \displaystyle \gamma \int_{\Omega}  N w u^{\gamma-1}   \ud x +  (\gamma-1) \int_{\Omega}   w u^{\gamma}  \ud x\\[.3cm]
\qquad \displaystyle
\leq \|N\|_{\infty} \int_{\Omega} w^{\gamma}  \ud x +  \gamma\,\|N\|_{\infty} \int_{\Omega} u^{\gamma}  \ud x   \\[.3cm]
\qquad\qquad+ \displaystyle\frac{1}{\varepsilon}\int_{\Omega}   w^{\gamma+1}  \ud x + \gamma\varepsilon^{1/\gamma}\int_{\Omega} u^{\gamma+1}  \ud x\,.
\end{array}\ee
We have also used weighted Young's inequality $$ab=\frac{a}{\varepsilon^{1/\beta}}\times 
\varepsilon^{1/\beta} b\leq \frac{1}{\varepsilon \beta}a^\beta + 
\varepsilon^{\beta'/\beta}\frac{b^{\beta'}}{\beta'},$$ with free parameter $\varepsilon>0$, in the last inequality.  A similar procedure on the second equation in \eqref{2000} gives, for any $\alpha > 1$, 
%
$$\begin{array}{l}
\displaystyle\frac{\ud}{\ud t} \int_{\Omega} w^\alpha  \ud x
+ 4 \frac{\alpha-1}{\alpha}\int_{\Omega} |\nabla w^{\alpha/2}|^2  \ud x\\[.3cm]
\qquad\le \displaystyle \alpha \int_{\Omega}   c u w^{\alpha-1} \ud x + \alpha (\alpha-1) \int_{\Omega} w^{\alpha-1} \nabla w\cdot \nabla v  \ud x.
\end{array}$$
The first integral of the right hand side is directly estimated by 
\[
\displaystyle\|c\|_{\infty}\left( \int_{\Omega} u^{\alpha}  \ud x+ \alpha \int_{\Omega} w^{\alpha}  \ud x\right),
\] 
as a consequence of H\"older and Young inequalities.
There are two ways to estimate the last integral, depending on the assumptions  
on $v$:
\begin{itemize}
\item[\small$\bullet$]
We 
recognize $ \alpha w^{\alpha-1} \nabla w=w^{\alpha/2}\times 2 \nabla w^{\alpha/2} $ and we
simply use the Cauchy-Schwarz inequality to obtain 
$$\begin{array}{l}\displaystyle
\Big| \alpha (\alpha-1) \int_{\Omega} w^{\alpha-1} \nabla w\cdot \nabla v  \ud x\Big|
\\[.3cm]
\leq
\displaystyle
 \|\nabla v\|_\infty \left(\frac{4(\alpha-1)}{\alpha} \int_\Omega|\nabla w^{\alpha/2}|^2\ud x \right)^{1/2}\left(\alpha(\alpha-1)\int_\Omega w^\alpha \ud x\right)^{1/2}
\end{array}$$
which leads to 
\be\label{4000-1}\begin{array}{l}
\displaystyle\frac{\ud}{\ud t} \int_{\Omega} w^\alpha  \ud x
+ 2 \frac{\alpha-1}{\alpha}\int_{\Omega} |\nabla w^{\alpha/2}|^2  \ud x\\[.3cm]
\qquad
\leq \displaystyle\|c\|_{\infty} \int_{\Omega} u^{\alpha}  \ud x +  \alpha \Big(\|c\|_{\infty}+\frac\alpha2
\|\nabla v\|^2_{\infty}\Big) \int_{\Omega} w^{\alpha}  \ud x
.
\end{array}\ee
\item[\small$\bullet$]
When \eqref{nv+} holds, we can integrate by parts so that 
$$
 \alpha (\alpha-1) \int_{\Omega} w^{\alpha-1} \nabla w\cdot \nabla v  \ud x=
 (\alpha-1) \int_\Omega \nabla w^\alpha \cdot \nabla v \leq
 (\alpha -1)\int_\Omega  w^\alpha \Delta v\ud x
$$
which yields 
\be\label{4000-2}\begin{array}{l}
\displaystyle\frac{\ud}{\ud t} \int_{\Omega} w^\alpha  \ud x
+ 4 \frac{\alpha-1}{\alpha}\int_{\Omega} |\nabla w^{\alpha/2}|^2  \ud x\\[.3cm]
\qquad\leq \displaystyle\|c\|_{\infty} \int_{\Omega} u^{\alpha}  \ud x +  \alpha\,\big(\|c\|_{\infty}+\|\Delta v\|_{\infty}\big) \int_{\Omega} w^{\alpha}  \ud x\,.
\end{array}\ee
\end{itemize}
Estimate \eqref{4000-1} is better in terms of required regularity for $v$, however, \eqref{4000-2} will play a role when $\alpha\rightarrow\infty$.  The latter will be used to show uniform propagation of the $L^{\infty}$-norm later on. 

Now, we use the fact that the space dimension is $n=2$ and we appeal to the following Gagliardo--Nirenberg--Sobolev inequality (see e.~g.~\cite[eq. (85) p.~195]{Brez}), which holds for any $\alpha\geq 1$,
 \begin{equation}
\label{4500}\begin{array}{lll}
 \displaystyle\int_{\Omega} \xi^{\alpha+1}  \ud x &\le& C(\Omega,\alpha) \| \xi \|_{1}  \
\| \xi^{\alpha/2}\|_{H^1}^2 
\\
[.3cm]
&\le & C(\Omega,\alpha) \displaystyle\int_{\Omega} \xi  \ud x
\left(\int_\Omega  \xi^{\alpha}\ud x +\int_\Omega |\nabla (\xi^{\alpha/2})|^2\ud x\right).
\end{array}\end{equation}
Combined with $\int u(t,x) \ud x+ \int w(t,x)\ud x  = m_o$, it allows us to absorb the higher exponent of $u$ in the right side of \eqref{3000} 
by choosing $\varepsilon>0$ sufficiently small.  We can thus find positive constants $C(m_o)$ and $C'(m_o)$, depending on \eqref{HypH}, 
  and on the
 exponents $\alpha$ and $\gamma$, such that 
\begin{align}\label{4600}
\begin{split}
\frac{\ud}{\ud t} \int_{\Omega} u^\gamma  \ud x &+ C'(m_o)\int_{\Omega} u^{\gamma+1}  \ud x \\[.3cm]
&\leq C(m_o) \Big( \int_{\Omega} u^{\gamma}  \ud x + \int_{\Omega} w^{\gamma}  \ud x + \int_{\Omega}   w^{\gamma+1}  \ud x \Big)\,,\\[.3cm]
\frac{\ud}{\ud t} \int_{\Omega} w^\alpha  \ud x &+ C'(m_o)\int_{\Omega} w^{\alpha+1}  \ud x
\leq C(m_o) \Big( \int_{\Omega} u^{\alpha}  \ud x + \int_{\Omega} w^{\alpha}  \ud x\Big)\,.
\end{split}
\end{align}
 In order to control the right hand side  of inequalities \eqref{4600} exponents in the left hand side have to be bigger than exponents in the right;
  therefore, we are forced to choose $\gamma<\alpha<\gamma+1$.  We shall use Lebesgue's interpolation inequalities
\[
\begin{array}{ll}
\|w\|_{\gamma}\leq \|w\|^{1-\theta_1}_{1}\|w\|^{\theta_1}_{\alpha+1}\,,\quad &\theta_1 = \frac{(\gamma-1)(\alpha+1)}{\gamma\alpha}\in(0,1)\,,\\[.3cm]
\|w\|_{\gamma+1}\leq \|w\|^{1-\theta_2}_{1}\|w\|^{\theta_2}_{\alpha+1}\,,\quad &\theta_2 = \frac{\gamma(\alpha+1)}{\alpha(\gamma+1)}\in(0,1)\,.\\[.3cm]
\|u\|_{\alpha}\leq \|u\|^{1-\theta_3}_{1}\|u\|^{\theta_3}_{\gamma+1}\,,\quad &\theta_3 = \frac{(\gamma+1)(\alpha-1)}{\gamma\alpha}\in(0,1)\,.
\end{array}\]
Let us introduce
\begin{equation*}
U(t):=\int_{\Omega} u^{\gamma}(t)\ud x,\qquad
W(t):=\int_{\Omega} w^{\alpha}(t)\ud x.
\end{equation*}
>From \eqref{4600} we are thus led to
\begin{align*}
\frac{\ud}{\ud t} U &+ C'(m_o)\|u\|^{\gamma+1}_{\gamma+1} \leq C(m_o) \Big( U + \|w\|^{(\alpha+1)\theta_{1}}_{\alpha+1} + \|w\|^{(\alpha+1)\theta_{2}}_{\alpha+1} \Big)\,,\\[.3cm]
\frac{\ud}{\ud t} W &+ C'(m_o)\| w \|^{\alpha+1}_{\alpha+1} \leq C(m_o) \Big( W +  \|u\|^{(\gamma+1)\theta_{3}}_{\gamma+1} \Big)\,.
\end{align*}
For any $\varepsilon>0,  \beta\geq 1$, we can find $C(\varepsilon, \beta)>0$ such that $s\leq \varepsilon s^\beta + C(\varepsilon, \beta)
$ for any $s\geq 0.$
Thus, adding up these inequalities and setting $Z:=U+W$, it follows that there exists constants $C_{1}(m_0)$ and $C_{2}(m_o)$ such that
\begin{equation}\label{4650}
\frac{\ud}{\ud t}Z + C_{1}(m_o)\Big(\|u\|^{\gamma+1}_{\gamma+1}+\| w \|^{\alpha+1}_{\alpha+1}\Big)  \leq C_{2}(m_o)\big(Z+1\big)\, .
\end{equation}
Using Lebesgue's interpolation again
\begin{align*}
\|u\|_{\gamma}&\leq \|u\|^{1-\theta_4}_{1}\|u\|^{\theta_4}_{\gamma+1}\,,\qquad \theta_4=\frac{\gamma^{2}-1}{\gamma^{2}}\,,\\[.3cm]
\|w\|_{\alpha}&\leq \|w\|^{1-\theta_5}_{1}\|u\|^{\theta_5}_{\alpha+1}\,,\qquad \theta_5=\frac{\alpha^{2}-1}{\alpha^{2}}\,,
\end{align*}
we readily infer
\begin{align*}
\|u\|^{\gamma+1}_{\gamma+1}+\| w \|^{\alpha+1}_{\alpha+1}&\geq C(m_o)\Big(\|u\|^{(\gamma+1)/\theta_{4}}_{\gamma}+\|w\|^{(\alpha+1)/\theta_5}_{\alpha}\Big)
\\[.3cm]&\geq C(m_o)\Big(U^{\frac{\gamma}{\gamma-1}} + W^{\frac{\alpha}{\alpha-1}} \Big)\geq C(m_o)\,Z^{\frac{\alpha}{\alpha-1}}- C(m_o)\,,
\end{align*}
by using the fact that $s\mapsto \frac{s}{s-1}$ is non increasing.
Using $s\leq \varepsilon s^\beta + C(\varepsilon, \beta)$ again, and coming back to  \eqref{4650}, we arrive at
\begin{equation*}
\frac{\ud}{\ud t}Z + C'_{1}(m_o)\,Z^{\frac{\alpha}{\alpha-1}}\leq C'_{2}(m_o)\,,
\end{equation*}
with constants $C'_{1}(m_o)$ and $C'_{2}(m_0)$ depending on the initial mass $m_o$, \eqref{HypH} and the exponents $\alpha, \gamma$.  Therefore,  the   comparison principle in Corollary~\ref{A2} yields
\begin{equation*}
Z(t)\leq C(m_o)\Big(1+\frac{1}{t^{\alpha-1}} \Big)\,.
\end{equation*}
In other words, for any $1<\gamma<\alpha<\gamma+1<\infty$, we have
\begin{align*}
\|w(t)\|_{\alpha}&\leq C(m_o)\Big(1+\frac{1}{t^{\frac1{\alpha'}}} \Big)\,,\\[.3cm]
\|u(t)\|_{\gamma}&\leq C(m_o)\Big(1+\frac{1}{t^{\frac{\alpha-1}{\gamma}}} \Big)=C(m_o)\Big(1+\frac{1}{t^{(\frac{1}{\gamma'})^{+}}} \Big)\,.
\end{align*}
In the last equality we have used the fact that $\gamma<\alpha$ can be taken as close as desired.  
Furthermore, by invoking Lemma \ref{A1} the estimate on $t\mapsto Z(t)$ can be upgraded to $\sup_{t\geq0}Z(t)\leq C$  provided we add the dependence on the norms $\|w_o\|_{\alpha}$ and $\|u_o\|_{\gamma}$ in the constant.  This completes the proof of Proposition~\ref{T1000}.
\endproof
\subsection{From $L^{\gamma}$ to $L^{\infty}$ regularity.}
In this section we prove further gain of boundedness  for \eqref{2000}, showing that classical solutions are in fact bounded for any positive time.  We adopt  De Giorgi's energy method
\cite{DeG}, which has been successfully adapted to investigate the regularity of PDEs systems see e.~g.~\cite{CV, CV2, GV}.  Let us start the discussion and we shall state the theorem at the end of this section.\\[.3cm]

Consider a classical non negative solution $w$ of the equation
\begin{equation}\label{LpLinfe1}
\partial_{t}w - \Delta w + \nabla\cdot(w\nabla v) = f\,,\quad \text{in}\;\; [0,T]\times\Omega\,,
\end{equation}
with $v$ and $f$ given functions.
For the boundary condition we assume that $(\nabla w-w\nabla v)\cdot \nn=0$.
Define the level sets 
\begin{equation*}
w_{\lambda}= (w-\lambda)\,\textbf{1}_{\{w>\lambda\}}\,,\quad \lambda\geq0\,.
\end{equation*}
Multiply \eqref{LpLinfe1} by $w_{\lambda}$.  Owing to the boundary conditions, we obtain
\begin{equation*}
\displaystyle\frac{\ud}{\ud t}
\int_{\Omega}w_{\lambda}^{2}\, \ud x +2 \int_{\Omega}\big|\nabla w_{\lambda} \big|^{2}\, \ud x \leq 2\int_{\Omega} w\nabla v\cdot\nabla w_{\lambda}\, \ud x + 2\int_{\Omega}f_{+}\,w_{\lambda}\, \ud x.
\end{equation*}
   Young's inequality leads to
\begin{equation}\label{LpLinfe2}
\displaystyle\frac{\ud}{\ud t}\int_{\Omega}w_{\lambda}^{2}\, \ud x + 
\int_{\Omega}\big|\nabla w_{\lambda} \big|^{2}\, \ud x \leq \int_{\Omega} \big| w\nabla v \big|^{2}\,\textbf{1}_{\{w>\lambda\}} \ud x + 2\int_{\Omega} f_{+}  \,w_{\lambda}\, \ud x.
\end{equation}

We split the reasoning  into two steps.
We start by  proving the $L^\infty$ bound  
on a given time interval $[t_\star,T]$.
Secondly, we shall extend the estimate to infinitely large time intervals.
Thus, let us consider $0<t_\star<T<\infty$.
Let $M>0$  and define the following sequence of levels and times
\begin{equation*}
\lambda_{k} = \big(1-1/2^{k}\big)M\,,\quad t_{k}= \big(1 - 1/2^{k+1} \big)t_{\star}\,,\quad k=0,1,2,... 
\end{equation*}
Define the following energy functional for the level sets
\begin{equation}\label{LpLinfe3}
W_{k}:= \sup_{t\in[t_{k},T]}\int_{\Omega}w^{2}_{k}\, \ud x +
\int^{T}_{t_{k}}\int_{\Omega} \big|\nabla w_{k} \big|^{2} \ud x\, \ud t\,,
\end{equation}
where we adopted the notation $w_{k}:=w_{\lambda_{k}}$.  With  $\lambda=\lambda_{k}$, let us integrate inequality \eqref{LpLinfe2} over the time interval $[s,t]$;  we obtain
\begin{align*}
\int_{\Omega}w^{2}_{k}(t,x) \ud x + 
 \int^{t}_{s}\int_{\Omega}&\big|\nabla w_{k} \big|^{2} \, \ud x\ud t' \leq 
 \int_{\Omega}w^{2}_{k}(s,x) \ud x 
\\[.3cm]
&\hspace{-.3cm}+ 
\int^{t}_{s}\int_{\Omega} \big| w\nabla v \big|^{2}\,\textbf{1}_{\{w>\lambda_{k}\}} \ud x\ud t' + 2\int^{t}_{s}\int_{\Omega} f_{+}  \,w_{k}\, \ud x\ud t'.
\end{align*}
We use this relation with $t_{k-1}\leq s\leq t_k\leq t\leq T$. It implies 
\[\begin{array}{l}
\displaystyle
W_{k}\leq  \int_{\Omega}w^{2}_{k}(s,x) \ud x \\[.3cm]
\qquad\quad+ \displaystyle\int^{T}_{t_{k-1}}\int_{\Omega} \big| w\nabla v \big|^{2}\,\textbf{1}_{\{w>\lambda_{k}\}} \ud x\ud t '+ 2\int^{T}_{t_{k-1}}\int_{\Omega} f_{+}  \,w_{k}\, \ud x\ud t'\,.
\end{array}\]
We take the mean over $s\in[t_{k-1},t_{k}]$, bearing in mind that $t_k-t_{k-1}=t_\star/2^{k+1}$. It yields
\begin{align}\label{LpLinfe4}
\begin{split}
W_{k}\leq & \frac{2^{k+1}}{t_{\star}}\int^{T}_{t_{k-1}}\int_{\Omega}w^{2}_{k} \ud x\ud s \\[.3cm]
&+ \int^{T}_{t_{k-1}}\int_{\Omega} \big| w\nabla v \big|^{2}\,\textbf{1}_{\{w>\lambda_{k}\}} \ud x\ud t '+ 2\int^{T}_{t_{k-1}}\int_{\Omega} f_{+}  \,w_{k}\, \ud x\ud t'\,.
\end{split}
\end{align}
We are going to make use of the  Gagliardo-Nirenberg interpolation inequality, see \cite[eq. (85), p. 195]{Brez},  
\begin{equation}\label{GNI}
\|w\|^{p}_{p} \leq C(\Omega,p,\alpha)\| w\|^{p\alpha}_{H^1}\|w\|^{(1-\alpha)p}_{2}\,,\quad 1=\Big(\frac{1}{2} - \frac{1}{n} \Big)\alpha\,p + \frac{1-\alpha}{2}\,p\,,
\end{equation}
which holds for any $\alpha\in[0,1]$ and $1\le p,q \le \infty$ (note that we perform the estimates without restricting the space dimension for the moment).  Thus, choosing $\alpha\,p=2$ it follows that
\begin{equation}\label{LpLinfe5}
\|w\|^{p}_{p} \leq C(\Omega,n)\| w\|^{2}_{H^1}\|w\|^{p-2}_{2}\,,\quad p=2\frac{n+2}{n}\,.
\end{equation}
Note that if $w_{k}>0$ then $w_{k-1}\geq 2^{-k}M$, and, as a consequence,
\begin{equation}
\label{alpha}
\textbf{1}_{\{w>\lambda_{k}\}} \leq \Big(\frac{2^{k}}{M}\,w_{k-1}\Big)^{a}\,,\quad \forall\,\,a\geq 0\,.
\end{equation}
Having this in mind one can play with the homogeneity of the right hand terms in the level set energy inequality \eqref{LpLinfe4}.  Indeed, 
with $a=4/n$, 
the first of them can be evaluated as follows
\begin{equation}
\begin{array}{l}\label{LpLinfe6}
\displaystyle\frac{2^{k+1}}{t_{\star}}\int^{T}_{t_{k-1}}\int_{\Omega}w^{2}_{k}\,\textbf{1}_{\{w>\lambda_{k}\}}\ud x\ud s \leq
\displaystyle \frac{2^{k+1}}{t_{\star}}\int^{T}_{t_{k-1}}\int_{\Omega}w^{2}_{k-1}\,\Big(\frac{2^{k}}{M}\,w_{k-1}\Big)^{\frac{4}{n}} \ud x\ud s\\[.3cm]
\qquad \displaystyle
\leq  2\frac{ 2^{\frac{4+n}{n}k} }{M^{\frac{4}{n}}t_{\star}}\int^{T}_{t_{k-1}}\int_{\Omega}w^{2\frac{n+2}{n}}_{k-1} \ud x\ud s \\[.3cm]
\qquad\displaystyle
 \leq
2C(\Omega,n)\ \frac{ 2^{\frac{4+n}{n}k} }{M^{\frac{4}{n}}t_{\star}}\int^{T}_{t_{k-1}}
\big(\|w_{k-1}\|_2^2+ \|\nabla w_{k-1}\|_2\big)\|w_{k-1}\|_2^{2\frac{n+2}{n}-2}
\ud s
\\[.3cm]
\qquad\displaystyle
 \leq 2C(\Omega,n)(1+T) \frac{ 2^{\frac{4+n}{n}k} }{M^{\frac{4}{n}}t_{\star}} W^{\frac{n+2}{n}}_{k-1},
\end{array}\end{equation}
by virtue of  \eqref{LpLinfe5} and the definition of $W_{k-1}$.  The last two terms 
in the right hand side of  \eqref{LpLinfe4} can be treated by using a similar procedure, together with 
 the application of H\"{o}lder's inequality. On the one hand, 
 bearing in mind that the $t_k$'s are all larger than $t_\star/2$,
 we get using \eqref{alpha} with $a =p$
\[\begin{array}{l}
\displaystyle\int^{T}_{t_{k-1}}\int_{\Omega} \big| w\nabla v \big|^{2} \,\textbf{1}_{\{w>\lambda_{k}\}} \ud x\ud t \leq \int^{T}_{t_{k-1}}\| w\nabla v \|^{2}_{2q'} \Big(\int_{\Omega}\textbf{1}_{\{w>\lambda_{k}\}} \ud x\Big)^{\frac{1}{q}}\ud t\\[.3cm]
\qquad\leq \displaystyle\frac{2^{\frac{p}{q}k}}{M^{\frac{p}{q}}}\sup_{t\geq \frac{t_{\star}}{2}}\| w\nabla v \|^{2}_{2q'}\int^{T}_{t_{k-1}}\Big(\int_{\Omega} w^{p}_{k-1} \ud x\Big)^{\frac{1}{q}}\ud t\\[.3cm]
\qquad
\leq \displaystyle C(\Omega,p,\alpha)^{\frac{1}{q}}\frac{2^{\frac{p}{q}k}}{M^{\frac{p}{q}}}
\left(\sup_{t\geq \frac{t_{\star}}{2}}\| w\nabla v \|^{2}_{2q'}
\right)
\int^{T}_{t_{k-1}}\| w_{k-1}\|^{\frac{p}{q}\alpha}_{H^1}\|w_{k-1}\|^{(1-\alpha)\frac{p}{q}}_{2}\ud t\,.
\end{array}\] 
We have used \eqref{GNI} in the last inequality.  We choose the parameters so that $\frac{p}{q}\alpha=2$ for some $0<\alpha<1$, which can be achieved as long as $1<q<\frac{n}{n-2}$ (more precisely,  going back to \eqref{GNI}, note that $\alpha=\frac{q}{1+2q/n}$
and $p=2(1+2q/n)$). It  follows that
\begin{equation}\label{LpLinfe7}\begin{array}{l}
\displaystyle
\int^{T}_{t_{k-1}}\int_{\Omega} \big| w\nabla v \big|^{2}\textbf{1}_{\{w>\lambda_{k}\}}\ud x\ud t 
\\[.3cm]
\qquad\leq 
(1+T)C(\Omega,p,\alpha)^{\frac{1}{q}}\frac{2^{\frac{2}{\alpha}k}}{M^{\frac{2}{\alpha}}}\left(\sup_{t\geq \frac{t_{\star}}{2}}\| w\nabla v \|^{2}_{2q'} \right)
W^{\frac{1}{\alpha}}_{k-1}\,.
\end{array}\end{equation}
On the other hand, since $w_{k} \leq w\,\textbf{1}_{\{w>\lambda_{k}\}}$, we have
\begin{align}\label{LpLinfe8}
\begin{split}
\int^{T}_{t_{k-1}}\int_{\Omega} f_{+}  \,w_{k}\, \ud x\ud t
& \leq \left(\sup_{t\geq \frac{t_{\star}}{2}}\|w\, f_+\|_{q'}\right)
\int^{T}_{t_{k-1}}\Big(\int_{\Omega} \textbf{1}_{\{w>\lambda_{k}\}}\, \ud x\Big)^{\frac{1}{q}}\ud t\\[.3cm]
&\hspace{-.5cm}\leq \frac{2^{\frac{p}{q}k}}{M^{\frac{p}{q}}}\left(\sup_{t\geq \frac{t_{\star}}{2}}\|w\,f_+\|_{q'}\right)
\int^{T}_{t_{k-1}}\Big(\int_{\Omega} w^{p}_{k-1}\, \ud x\Big)^{\frac{1}{q}}\ud t\\[.3cm]
&\hspace{-.8cm}\leq (1+T)C(\Omega,p,\alpha)^{\frac{1}{q}}\frac{2^{\frac{2}{\alpha}k}}{M^{\frac{2}{\alpha}}}
\left(\sup_{t\geq \frac{t_\star}{2}}\| f_{+} \|_{2q'}\| w \|_{2q'}\right)
 W^{\frac{1}{\alpha}}_{k-1},
\end{split}
\end{align}
still using the same relation between $p,q$ and $\alpha$.
We go back to \eqref{LpLinfe4},  with \eqref{LpLinfe6}, \eqref{LpLinfe7} and \eqref{LpLinfe8}: we arrive at
\begin{align}\label{LpLinf9}
\begin{split}
W_{k} \leq & (1+T)\Big[2C(\Omega,n)\frac{ 2^{\frac{4+n}{n}k} }{M^{\frac{4}{n}}t_{\star}} W^{\frac{n+2}{n}}_{k-1} \\[.3cm]
&\hspace{-.5cm}+ 2C(\Omega,p,\alpha)^{\frac{1}{q}}\frac{2^{\frac{2}{\alpha}k}}{M^{\frac{2}{\alpha}}}
\sup_{t\geq \frac{t_\star}{2}}
\Big(
\| \nabla v \|^{2}_{\infty} \| w \|_{2q'}^2+
\| f_{+} \|_{2q'} \| w \|_{2q'} \Big) W^{\frac{1}{\alpha}}_{k-1}\Big].
\end{split}
\end{align}
For the final step, we specialize to the case of space dimension $n=2$.
As a matter of fact, we observe that $1<q<\frac{n}{n-2}=\infty$.
We can then appeal to the estimates in Proposition~\ref{T1000} which imply  (particularizing to $f_{+}=u\,c$)
\begin{equation}\label{auxts}
\displaystyle\sup_{t\geq \frac{t_\star}{2}}
\Big(
\| \nabla v \|^{2}_{\infty} \| w \|_{2q'}^2+
\| f_{+} \|_{2q'} \| w \|_{2q'} \Big)
 \leq C(m_o)\Big(1+\frac{1}{t^{(\frac{q+1}{q})^{+}}_{\star}}\Big)\,.
\end{equation}
Therefore, \eqref{LpLinf9} becomes
\begin{equation}\label{DE}
W_{k} \leq C(1+T)\left(\frac{ 2^{3k} }{M^{2}t_{\star}} W^{2}_{k-1} + \Big(1+\frac{1}{t^{(\frac{q+1}{q})^{+}}_{\star}}\Big)\frac{2^{\frac{2(q+1)}{q}k}}{M^{\frac{2(q+1)}{q}}}W^{\frac{q+1}{q}}_{k-1}\right).
\end{equation}
We can take advantage of the fact that the powers $\frac{n+2}{n}=2$ and $\frac{q+1}{q}$ are strictly larger than 1 to conclude.  Indeed, a direct calculation shows that 
$W_{0}\,a^{k}$ is a super solution of the first order difference equation \eqref{DE} 
 with choices of $a\in(0,1)$ small, and then $M$ large (see e.~g.~\cite[Lemma~3.3 \&~3.4]{GV} for similar arguments) such that
\begin{equation*}
\begin{array}{l}
\displaystyle\max\Big\{ 2^{3 } a  ,\, 2^{\frac{2(q+1)}{q}} a^{ \frac{1}q } \Big\}<1\,,\\
[.3cm]
\displaystyle\max\Big\{\frac{C\,W_0 }{ a^{2 }\,M^{ 2 } t_{\star} } , \,\frac{C\,W^{ \frac{1}{q}  }_0 }{ a^{ \frac{q+1}{q} }\,M^{ 2\frac{q+1}{q} } }\Big(1+\frac{1}{t^{(\frac{q+1}{q})^+}_{\star}}\Big)   \Big\}\leq \frac{1}{2(1+T)}.\end{array}
\end{equation*}
By a comparison principle, we check that  $W_{0}\,a^{k}\geq W_{k}$ holds under these circumstances.  We conclude that 
\begin{equation*}
\lim_{k\rightarrow\infty} W_{k}=0\,.
\end{equation*}
Let us suppose  $0<t_\star\ll 1$. 
Then, observe that  such a choice of $M$ explicitly takes the form
\begin{equation}\label{Main1}
M= \max\Big\{\sqrt{ \frac{ 2C(1+T)W_{0} }{a^2 t_{\star} }}
 ,\, \sqrt{ \displaystyle\frac{ (2C(1+T))^{\frac{q}{q+1}}W _{0}^{\frac{1}{q+1}}}{a}}\  \displaystyle\frac{2}{ t_{\star}^{(1/2)^+}  }\Big)\Big\}\,,
\end{equation}
where  the  constant $C$ depends on $m_o$, \eqref{HypH}, and of the exponent $q$.
We clearly have
\[
W_k\geq \displaystyle\frac{1}{T-t_\star}\displaystyle\int_{t_\star}^T\displaystyle\int_\Omega w^2(t,x)\ \mathbf 1_{\{w(t,x)\geq M(1-1/2^k)\}}\ud x\ud t.
\]
Letting $k$ run to 0, by virtue of Fatou's lemma we deduce that 
\[
 \displaystyle\frac{1}{T-t_\star}\displaystyle\int_{t_\star}^T\displaystyle\int_\Omega w^2(t,x)\ \mathbf 1_{\{w(t,x)\geq M\}}\ud x\ud t=0,
\]
which eventually implies 
\begin{equation}\label{Main2}
0\leq w(t,x)\leq M\qquad\text{ a.~e. $(t_\star,T)\times\Omega$.}
\end{equation}
This proves most of the following result.
\begin{proposition}\label{P1}
Let $(u,w)$ be a classical nonnegative solution of  \eqref{2000} with boundary conditions \eqref{2500}.  Then, the following $L^{\infty}$-estimate holds
\begin{align}\label{Bound}
\|u(t)\|_{\infty}+ \|w(t)\|_{\infty} &\leq  C(m_o)\Big(1+ \frac{1}{t^{1^+ }} \Big)\,, \quad t\in(0,T]\,,
\end{align}
where the constant $C$ depends additionally on \eqref{HypH}, but, it is independent of $T>0$.  Furthermore, 
when \eqref{nv+} holds, 
estimate \eqref{Bound} can be upgraded by adding the dependence on the $L^{\infty}$-norms of the initial data in the constant,
\begin{equation}\label{Bound1}
\max\big\{\|w(t)\|_{\infty}, \|u(t)\|_{\infty} \big\} \leq C\big(m_o,\|w_o\|_{\infty},\|u_o\|_{\infty}\big)\,, \quad t\geq0,
\end{equation}
where the constant is still independent of the time $T>0$.
\end{proposition}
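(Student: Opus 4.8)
The $w$--part of~\eqref{Bound} is already in hand: it is the conclusion~\eqref{Main2}, once one checks the hypothesis~\eqref{auxts} of the De~Giorgi scheme, which follows from Proposition~\ref{T1000} with $f_+=u\,c$. The plan then has three parts: run the same scheme for $u$, make the bounds independent of $T$, and, under~\eqref{nv+}, upgrade to~\eqref{Bound1}. For the first part, note that $\partial_t u-\Delta u+\nabla\cdot(u\,\nabla p)=wN-uc$ has exactly the form~\eqref{LpLinfe1} with $\nabla v$ replaced by $\nabla p$ and source $f=wN-uc$, so the level--set computation yields~\eqref{LpLinf9} with the factor $\sup_t\big(\|\nabla v\|^2_\infty\|w\|^2_{2q'}+\|f_+\|_{2q'}\|w\|_{2q'}\big)$ replaced by $\sup_t\big(\|\nabla p\|^2_\infty\|u\|^2_{2q'}+\|(wN)_+\|_{2q'}\|u\|_{2q'}\big)$. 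In dimension two, elliptic regularity for $-\Delta p=w-\delta p$ gives $\|\nabla p(t)\|_\infty\le C\|w(t)\|_r$ for any $r>2$, and since Proposition~\ref{T1000} controls the $L^r$--norms of $u$ and $w$ for every finite $r$, the analogue of~\eqref{auxts} holds and the iteration produces the $u$--part of~\eqref{Bound}. (Alternatively, using $\nabla p\cdot\nn|_{\partial\Omega}=0$ one may integrate by parts, $\int_\Omega u\,\nabla p\cdot\nabla u_\lambda\,\ud x=\int_\Omega(w-\delta p)\big(\tfrac12 u_\lambda^2+\lambda u_\lambda\big)\ud x$, which needs only $L^\infty$--control of $w$ and $p$, the term $\lambda u_\lambda$ being absorbed via~\eqref{alpha}.)

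The dependence on $T$ enters through the factor $\sqrt{1+T}$ in~\eqref{Main1}, coming from the crude bound $\int_{t_{k-1}}^T(\cdot)\,\ud s\le(1+T)\sup(\cdot)$ in~\eqref{LpLinfe6}--\eqref{LpLinf9}, and through the energy $W_0$ computed over the whole window. To remove it I would localize: to estimate $u(t),w(t)$ at a given $t>0$, rerun the iteration on a time window of length $\asymp\min\{t,1\}$ containing $t$, with a margin of comparable size in the level--time scheme. Over such a window the factor $1+T$ becomes $O(1)$, while $W_0=\sup_{\text{window}}\|w\|_2^2+\int_{\text{window}}\|\nabla w\|_2^2\,\ud s$ is controlled by Proposition~\ref{T1000} and by the $L^2$ energy estimate~\eqref{4000-1} integrated over the window: it is $O(1)$ when $t\ge1$ and $O\big(1+\tfrac1{t^{1^+}}\big)$ when $t\le1$. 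Hence $M\le C(m_o)\big(1+\tfrac1{t^{1^+}}\big)$ with $C$ independent of $T$, which is~\eqref{Bound}, for both $u$ and $w$.

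For~\eqref{Bound1}, assume~\eqref{nv+}. When $t\ge1$, estimate~\eqref{Bound} already gives $\|u(t)\|_\infty+\|w(t)\|_\infty\le 2C(m_o)$, so only $t\in(0,1]$ uses the $L^\infty$ data, and there I would use an $L^\alpha$ estimate with constants linear in $\alpha$ and send $\alpha\to\infty$. Under~\eqref{nv+}, \eqref{4000-2} gives $\frac{\ud}{\ud t}\int_\Omega w^\alpha\,\ud x\le C\alpha\big(\int_\Omega u^\alpha\,\ud x+\int_\Omega w^\alpha\,\ud x\big)$ with $C$ depending only on~\eqref{HypH}; testing the $u$--equation against $u^{\alpha-1}$ and integrating the drift by parts, $-\tfrac{\alpha-1}{\alpha}\int_\Omega\nabla p\cdot\nabla u^\alpha\,\ud x=\tfrac{\alpha-1}{\alpha}\int_\Omega u^\alpha(\delta p-w)\,\ud x\le\delta\|p\|_\infty\int_\Omega u^\alpha\,\ud x$, and, $\delta\|p(t)\|_\infty$ being bounded uniformly in $t$ (elliptic regularity for $-\Delta p=w-\delta p$ together with the time--uniform $L^2$ bound of Proposition~\ref{T1000}; this term is absent if $\delta=0$), one obtains $\frac{\ud}{\ud t}\int_\Omega u^\alpha\,\ud x\le C\alpha\big(\int_\Omega u^\alpha\,\ud x+\int_\Omega w^\alpha\,\ud x\big)$. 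Adding the two and applying Gr\"onwall's lemma,
\[
\int_\Omega u^\alpha(t)\,\ud x+\int_\Omega w^\alpha(t)\,\ud x\le\big(\|u_o\|^\alpha_\alpha+\|w_o\|^\alpha_\alpha\big)e^{C\alpha t}\le\big(\|u_o\|^\alpha_\infty+\|w_o\|^\alpha_\infty\big)\,|\Omega|\,e^{C\alpha t},
\]
so taking the $\alpha$--th root and letting $\alpha\to\infty$ yields $\max\{\|u(t)\|_\infty,\|w(t)\|_\infty\}\le\max\{\|u_o\|_\infty,\|w_o\|_\infty\}\,e^{Ct}$, which on $(0,1]$ is~\eqref{Bound1}.

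The hard part is precisely the uniformity in $T$ (and, for~\eqref{Bound1}, the uniformity down to $t=0$): the bare De~Giorgi iteration generates the factor $\sqrt{1+T}$, and the bare $L^\alpha$ estimate~\eqref{4000-1} generates the coefficient $\alpha^2\|\nabla v\|^2_\infty$, whose exponential $e^{C\alpha^2t}$ has a divergent $\alpha$--th root. Both are overcome only thanks to the extra structure available here --- the time--uniform $L^\gamma$ bounds of Proposition~\ref{T1000} and, decisively for~\eqref{Bound1}, the possibility under~\eqref{nv+} of integrating the drift by parts so as to trade $\alpha^2\|\nabla v\|^2_\infty$ for $\alpha\|\Delta v\|_\infty$. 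The remaining items (verifying the De~Giorgi hypotheses for the $u$--equation, the elliptic bounds for $p$, and controlling $W_0$ by the $L^2$ energy estimates) are routine bookkeeping.
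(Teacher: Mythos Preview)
Your argument for~\eqref{Bound} is essentially the paper's: De~Giorgi for $w$, elliptic regularity to control $\|\nabla p\|_\infty$ by $\|w\|_{2^+}$, then De~Giorgi for $u$; your ``localize to a window of length $\asymp\min\{t,1\}$'' is equivalent to the paper's device of fixing $T=1$ and then time--shifting the solution (mass conservation keeps the constant $C(m_o)$ unchanged).

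For~\eqref{Bound1}, however, there is a sign error that breaks the argument. After testing the $u$--equation with $u^{\alpha-1}$ the drift term $-\tfrac{\alpha-1}{\alpha}\int_\Omega\nabla p\cdot\nabla u^\alpha\,\ud x$ sits on the \emph{left}; moving it to the right produces $+\tfrac{\alpha-1}{\alpha}\int_\Omega\nabla p\cdot\nabla u^\alpha\,\ud x=\tfrac{\alpha-1}{\alpha}\int_\Omega(w-\delta p)\,u^\alpha\,\ud x$, and since $p\ge0$ this is bounded above by $\|w\|_\infty\int_\Omega u^\alpha\,\ud x$, \emph{not} by $\delta\|p\|_\infty\int_\Omega u^\alpha\,\ud x$. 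Attractive chemotaxis means the destabilizing contribution is $+w$, not $+\delta p$. Thus the inequality $\tfrac{\ud}{\ud t}\int_\Omega u^\alpha\,\ud x\le C\alpha\big(\int_\Omega u^\alpha\,\ud x+\int_\Omega w^\alpha\,\ud x\big)$ cannot be obtained without a bound on $\|w(t)\|_\infty$, which is precisely part of what you are trying to prove.

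The paper closes this gap by an extra step you omitted: before running the $L^\gamma$ argument, it reruns the De~Giorgi iteration for $w$ (and then $u$) with the level--time sequence starting at $t_0=0$ rather than $t_0=t_\star/2$. Using the time--uniform $L^{2^+}$ bound from Proposition~\ref{T1000} (available since $u_o,w_o\in L^\infty$), this yields the integrable--in--time weak bound $\|w(t)\|_\infty\le C(m_o,A)\big(1+t^{-1/2}\big)$ on $(0,1]$. Plugging that into the $u$--estimate gives $\tfrac{\ud Z}{\ud t}\le C\gamma(1+t^{-1/2})Z$ with $Z=\int u^\gamma+\int w^\gamma$; since $\int_0^1(1+s^{-1/2})\,\ud s<\infty$, Gr\"onwall and $\gamma\to\infty$ then produce~\eqref{Bound1} on $(0,1]$, and~\eqref{Bound} handles $t\ge1$.
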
 
\proof
Proceeding as we did in \eqref{LpLinfe2}, we can establish the following inequality for $w$:
\begin{equation}\label{intW0}
\displaystyle\frac{\ud}{\ud t}\int_{\Omega}w^{2}\, \ud x + 
\int_{\Omega}\big|\nabla w \big|^{2}\, \ud x \leq \int_{\Omega} \big| w\nabla v \big|^{2} \ud x + 2\int_{\Omega} f_{+}  w \ud x\,.
\end{equation}
We integrate over $[s,t]$ and we get, with $f_+=uc$,
\[\begin{array}{l}
\displaystyle\int_{\Omega}w^{2}(t,x)\, \ud x + 
\int_s^t\int_{\Omega}\big|\nabla w(\sigma,x) \big|^{2}\, \ud x\ud\sigma 
\\[.3cm]
\qquad\leq
\displaystyle\int_{\Omega}w^{2}(s,x)\, \ud x 
+
\|\nabla v\|_\infty \int_s^t \int_{\Omega}  w^2(\sigma,x) \ud x \ud\sigma
\\[.3cm]
\qquad\qquad+
\|c\|_\infty\displaystyle\int_s^t 
 \int_{\Omega}  \big(u^2+w^2)(\sigma,x) \ud x\ud\sigma.
\end{array}\]
We use this relation with $s=t_\star/2\leq t\leq T$, which allows us to obtain the following estimate (recall the definition of $W_0$ in \eqref{LpLinfe3})
\begin{equation}\label{IC}\begin{array}{l}
W_{0} \leq 
\|w\big(t_\star/2)\|^{2}_{2} 
\\[.3cm]
\qquad+
\displaystyle
 \Big(\|\nabla v\|_\infty+
\|c\|_\infty
\Big)\Big(T-\frac{t_{\star}}{2}\Big)\Big(\sup_{ t\in[\frac{t_\star}{2},T] } \|w(t) \|^{2}_{2} + \sup_{ t\in[\frac{t_\star}{2},T] } \|u(t)\|^{2}_{2} \Big)\,.
\end{array}\end{equation}
By Proposition \ref{T1000}, $W_0$ is thus dominated by $C(m_0)(1+T)(1+1/t_\star^{1^+})$.

Now, we set $T=1$, say.
Going back to \eqref{Main1} and \eqref{Main2}, we conclude that 
\begin{equation}\label{Linf}
\|w(t)\|_{\infty}\leq M\leq C(m_o)\ \frac{1}{ t_\star^{1^+} } 
\end{equation}
holds for short times, say $0< t_\star\leq t\leq T=1$.
It is clear that the same reasoning can be applied for any $T$, and it would provide a bound depending on $T$.
In order to justify that the $L^\infty$ estimate can be made uniform with respect to $T$, we thus proceed differently, by extending the estimate obtained on $[t_\star,1]$.
Indeed,  for any $t_1>0$, the shifted function $(t,x)\mapsto w_{t_1}(t,x)=w(t+t_1,x)$ is still a solution of  equation \eqref{LpLinfe1}, with data $w_{t_1}(0,x)=w(t_1,x)$ and the appropriate right hand side $f$.
In particular mass conservation implies $\int_\Omega (u(t_1,x)+w(t_1,x))\ud x=m_0$, so that the constant $C(m_o)$ does not change.
Therefore, we pick $0<t_1<T$ and 
we can repeat the same arguments for $w_{t_1}$,
which   leads to the same $L^\infty$ estimate \eqref{Linf} as for $w$  on the time interval $[t_\star,T]$, that is to say
\eqref{Linf} holds for $t\in [t_1+t_\star,t_1+T]$.
We have thus extended \eqref{Linf} over $[t_\star,t_1+T]$, see Figure~\ref{Fig:TimeShift}, and we can repeat this procedure.
  This completes the proof of \eqref{Bound} for $w$.

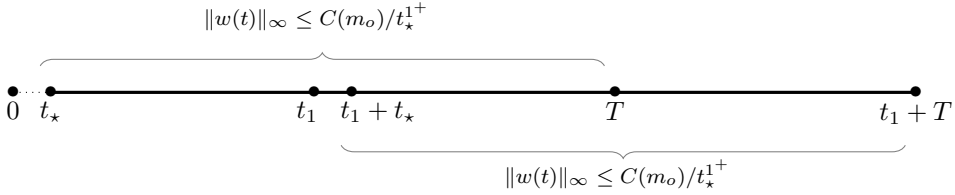
\begin{figure}[!ht]
\begin{center}
\begin{tikzpicture}
\draw [dotted] (-6,0) -- (-5.5,0);
\draw [very thick] (-5.5,0) -- (-2,0) -- (2,0) -- (6,0);
\draw (-6,0) node[below] {$0$} node[]{$\bullet$};
\draw (-5.5,0) node[below] {$t_\star$} node[]{$\bullet$};
\draw(-2,0) node[below] {$\!\! \! t_1$} node[]{$\bullet$};
\draw(-1.5,0) node[below] {$\qquad t_1+t_\star$} node[]{$\bullet$};
\draw(2,0) node[below] {$T$} node[]{$\bullet$};
\draw(6,0) node[below] {$t_1+T$} node[]{$\bullet$};
\draw [gray,decorate,decoration={brace,amplitude=5pt}, xshift=-4pt,yshift=10pt] 
(-5.5,0)  -- (2,0) 
   node [black,midway,above=10pt,xshift=-2pt]
{\footnotesize{$\|w(t)\|_\infty\leq C(m_o)/t_\star^{1^+}$}};
\draw [gray,decorate,decoration={brace,amplitude=5pt,mirror}, xshift=-4pt,yshift=-20pt] 
(-1.5,0)  -- (6,0) 
   node [black,midway,below=2pt,xshift=-2pt]
{\footnotesize{$\|w(t)\|_\infty\leq C(m_o)/t_\star^{1^+}$}};
\end{tikzpicture}
\caption{Extension of the local result.}
\label{Fig:TimeShift}\end{center}
\end{figure}

Next, using elliptic regularity for the pheromone equation, see \cite{ADN}, and Sobolev's embedding (still with space dimension $n=2$), it follows that
\begin{equation*}
\|\nabla p(t)\|_{\infty}\leq C\|p(t)\|_{W^{2,2^+}}
 \leq C(\Omega)\,\|w(t)\|_{2^+}\leq C(m_o)\,\Big(1 + \frac{1}{t^{(1/2)^+}}\Big)\,.
\end{equation*}
Equipped with this estimate we repeat for $u$ the arguments 
used for estimating $w$,
just changing $c$ to $N$ and replacing the uniform estimate on $\nabla v$ by 
this time-dependent estimate for $\nabla p$.
 This finishes the proof of estimate \eqref{Bound}.
  As a matter of fact, note that \eqref{Linf} applies in particular to $t=t_\star$:
 $\|w(t_\star)\|_\infty\leq C(m_o)/t_\star^{1^+}$ holds for any $0<t_\star\leq 1$.
\\

For proving  the uniform bound \eqref{Bound1} let us set
\begin{equation}\label{eqA}
A:=\max\Big\{\sup_{s\geq0}\|u(s)\|_{2^{+}},\,\sup_{s\geq0}\|w(s)\|_{2^{+}}\Big\}<\infty.
\end{equation}
We slightly modify the previous analysis.
In particular, we change the definition of the $t_k$'s
by setting $t_k=t_\star(1-1/2^k)$, so that it now starts from $t_0=0$.
Going back to \eqref{intW0}, we get
$W_0\leq \|w_0\|^{2}_{2}+C\,T$, with $C$ depending on \eqref{HypH} and the constant $A$ in \eqref{eqA}.
Furthermore, 
 the additional information in \eqref{eqA} allows us to control uniformly with respect to time the $L^{2q'}$ norms
(bearing in mind that making $q$ large means $q'$ close to 1) so that \eqref{auxts}
becomes
$$\displaystyle\sup_{t\geq 0}
\Big(
\| \nabla v \|^{2}_{\infty} \| w \|_{2q'}^2+
\| f_{+} \|_{2q'} \| w \|_{2q'} \Big)
 \leq C.
$$
Accordingly,
 inequality \eqref{DE} is changed into
\begin{equation*}
W_{k} \leq C_0\frac{ 2^{3k} }{M^{2}t_\star} W^{2}_{k-1} + C_{1}\frac{2^{\frac{2(q+1)}{q}k}}{M^{\frac{2(q+1)}{q}}}W^{\frac{q+1}{q}}_{k-1},
\end{equation*}
where $C_0$ and $C_1$ depend on \eqref{HypH}, but also on $T$ and  $A$, while they do not depend on $k$ and $t_\star$.
Reproducing  the same argumentation as above permits us to establish that
\begin{equation}\label{Bound+1}
\|w(t)\|_{\infty} \leq  C(m_o,A)\Big(1+ \frac{1}{\sqrt{t}} \Big)\,,
\end{equation}
holds for  $0<t\leq 1$.
Furthermore, elliptic regularity \cite{ADN} yields 
\begin{equation}
\label{SobEmb}
\|\nabla p(t)\|_{\infty}\leq C(\Omega)\|w(t)\|_{2^{+}}\leq A 
\end{equation}
and, as a consequence, we get a similar estimate for $u$:
\begin{equation}\label{Bound+2}
\|u(t)\|_{\infty} \leq  C(m_o,A)\Big(1+ \frac{1}{\sqrt{t}} \Big)
\end{equation}
holds for  $0<t\leq 1$, too.
Let us set $$U(t):=\int_\Omega u^{\gamma}(t,x)\ud x,\qquad
W(t):=\int_\Omega w^{\gamma}(t,x)\ud x.$$ 
Since \eqref{nv+} holds, we can make use of 
 inequalities \eqref{3000} and \eqref{4000-2} (with $\alpha=\gamma$), and  we get
\begin{align}\label{UNIBOUND}
\begin{split}
\frac{\ud U}{\ud t} &\leq \|N\|_\infty W + \gamma\big( \|N\|_\infty  + \|w(t)\|_{\infty}\big)U\\[.3cm]
&\leq \|N\|_\infty W + \gamma B\Big( 1 + \displaystyle\frac{1}{\sqrt{t}}\Big)U\,, \\[.3cm]
\frac{\ud W}{\ud t} &\leq \|c\|_\infty U + \gamma \big( \|c\|_\infty + \|\Delta v\|_{\infty}\big)W\,,
\end{split}
\end{align}
for $t\in (0,1)$, with $B>0$ independent of $\gamma>0$.  Adding the two inequalities in \eqref{UNIBOUND},
we deduce that  $Z:=U+W$ satisfies
\begin{equation*}
\frac{\ud Z}{\ud t}\leq C\gamma\Big( 1 + \displaystyle\frac{1}{\sqrt{t}}\Big)Z,
\end{equation*}
for $t\in (0,1)$, and a certain constant $C>0$ which does not depend on $\gamma$.
It leads to
\begin{equation*}
Z(t) \leq Z_0 \exp\Big(C\gamma\int^{t}_{0}\Big( 1 + \displaystyle\frac{1}{\sqrt{s}}\Big)\ud s \Big)
\leq Z_0e^{3C\gamma},
\end{equation*}
which eventually implies the estimate
\begin{equation*}
\max\{\|u(t)\|_{\gamma},\|w(t)\|_{\gamma}\}\leq \max\{\|u_o\|_{\gamma},\|w_o\|_{\gamma}\}e^{3C}
\end{equation*}
which thus  holds a.e.~$t\in (0,1)$.
Sending $\gamma\rightarrow\infty$ proves estimate \eqref{Bound1} for short times.

Going back to the second part of the statement in Proposition~\ref{T1000}, we realize that \eqref{eqA} 
is ensured when the initial 
data belong to $L^1\cap L^\infty(\Omega)$.
This observation ends the proof of the $L^\infty$ estimate for  $0\leq t\leq 1$.
Combined to \eqref{Bound}, it  provides a uniform estimate on the solution for any $t\geq 0$.
\endproof
%
\subsection{Construction of classical solutions}

We have now the tools to prove the global well-posedness of the  system  \eqref{2000}.  
Firstly, we establish the existence--uniqueness of solutions locally in time, and, secondly, we extend the result, as a consequence of the obtained estimates.
We start by working with smooth and bounded initial data, say $(u_o,w_o)\in C^\infty_c(\Omega)$.

\begin{theorem}[Classical solutions]\label{T500}
Let $u_o,\,w_o \in C^{\infty}_c(\Omega)$ be nonnegative initial data. Then, for every $T>0$ the system~\eqref{2000} with boundary conditions \eqref{2500} admits a unique nonnegative classical solution $(u,w,p) \in \mathcal{Y}$ which satisfies the estimates of Propositions \ref{T1000} and \ref{P1}.
\end{theorem}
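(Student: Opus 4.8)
The plan is to combine a Banach fixed-point argument for local existence with a regularity bootstrap, and then invoke the \emph{a priori} estimates already established in Propositions~\ref{T1000} and~\ref{P1} to make the solution global. First I would set up the fixed-point: fix $T>0$ (small, to be shrunk later) and work on a complete metric space $\mathcal{Y}$ of triples $(u,w,p)$, for instance $(u,w)\in C([0,T];L^2(\Omega))\cap L^2(0,T;H^1(\Omega))$ with $(\partial_t u,\partial_t w)\in L^2(0,T;H^1(\Omega)^\star)$, staying in a ball of radius $R$ around the initial data, and $p$ recovered from $w$ through the elliptic problem $-\Delta p+\delta p=w$ with Neumann boundary condition. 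Define the map $\Phi:(\bar u,\bar w)\mapsto(u,w)$ where $u,w$ solve the \emph{linear} parabolic problems obtained by freezing the coefficients: $\partial_t u-\Delta u+\dive(u\nabla\bar p)=-\bar u c+\bar w N$ with the zero-flux boundary condition, and similarly for $w$, where $\bar p$ solves the elliptic equation with datum $\bar w$. Linear parabolic theory (e.g.\ Lions--Magenes / Ladyzhenskaya--Solonnikov--Uraltseva) gives existence, uniqueness and the energy bounds for each linear problem, so $\Phi$ is well-defined; elliptic regularity for $-\Delta+\delta$ on the $C^2$ domain controls $\|\nabla\bar p\|_\infty$ by $\|\bar w\|_{2^+}$ via Sobolev embedding in $n=2$.

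Next I would check that $\Phi$ maps the ball into itself and is a contraction for $T$ small. The self-mapping follows from the linear energy estimates: testing the $u$-equation with $u$, the drift term $\int u\nabla\bar p\cdot\nabla u$ is absorbed by the diffusion using Young's inequality and the $L^\infty$ bound on $\nabla\bar p$, and Gronwall gives a bound that, after choosing $T$ small, keeps $(u,w)$ in the ball. For the contraction, I would take two inputs $(\bar u_1,\bar w_1)$, $(\bar u_2,\bar w_2)$, write the equations for the differences $u_1-u_2$, $w_1-w_2$, and $p_1-p_2$ (the latter controlled by $w_1-w_2$ through elliptic regularity), test with the differences, and use the $L^\infty$ control on the frozen drift together with the boundedness of $N,c,\nabla v$ to obtain $\|\Phi(\bar u_1,\bar w_1)-\Phi(\bar u_2,\bar w_2)\|\leq CT^\kappa\|(\bar u_1,\bar w_1)-(\bar u_2,\bar w_2)\|$ for some $\kappa>0$; shrinking $T$ makes this a strict contraction. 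Banach's fixed-point theorem then yields a unique solution on $[0,T]$. Nonnegativity is obtained by a standard argument: test the $u$- and $w$-equations with the negative parts $u_-$ and $w_-$; using $N,c\geq 0$ and the sign structure of the reaction terms ($-uc+wN$ and $uc-wN$), together with assumption~\eqref{nv+} to handle the boundary contribution in the $w$-equation, one gets $\frac{d}{dt}(\|u_-\|_2^2+\|w_-\|_2^2)\leq C(\|u_-\|_2^2+\|w_-\|_2^2)$, hence $u,w\geq 0$ since the initial data are; then $p\geq 0$ by the maximum principle for $-\Delta+\delta$.

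To reach a \emph{classical} solution (in the sense of the definition given) I would run a bootstrap: with $(u,w,p)$ now a fixed-point with $(u,w)\in L^2(0,T;H^1)$ and, from the equations, $(\partial_t u,\partial_t w)\in L^2(0,T;H^1(\Omega)^\star)$, parabolic regularity on the linear equations satisfied by $u$ and $w$ (with right-hand sides now known to be in better spaces, using the $L^\infty$ control of $\nabla p$ and the smoothness of $N,c,v$) upgrades the integrability; since $u_o,w_o\in C^\infty_c(\Omega)$ and $\Omega$ is $C^2$, iterating gives enough regularity that $\partial_t u,\Delta u,\dive(u\nabla p)$ etc.\ are genuine $L^2((0,T)\times\Omega)$ functions and the equations and boundary conditions \eqref{2500} hold a.e.; uniqueness among classical solutions follows from the same difference estimate as in the contraction step, now run globally with Gronwall. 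Finally, the globalization: the solution just constructed is a classical nonnegative solution on some maximal interval, so by Propositions~\ref{T1000} and~\ref{P1} it satisfies the stated $L^\gamma$ and $L^\infty$ bounds, with constants depending only on $m_o$, \eqref{HypH}, and (for $\gamma,\infty$ with the sharper constants) the norms of the data — crucially \emph{not} on $T$. A continuation criterion (if $T_{\max}<\infty$ then $\limsup_{t\to T_{\max}}(\|u(t)\|_\infty+\|w(t)\|_\infty)=\infty$) together with these time-independent bounds forces $T_{\max}=\infty$, giving a global classical solution on every $[0,T]$. The main obstacle I expect is the contraction/self-mapping step: one must carefully balance the chemotactic drift $\dive(u\nabla p)$ against the parabolic smoothing, which in two dimensions works because elliptic regularity plus Sobolev embedding gives $\nabla p\in L^\infty$ from $w\in L^{2^+}$, but getting the estimates uniform enough — and choosing the function space $\mathcal{Y}$ so that both the nonlinear drift and the coupling through $p$ are controlled — requires some care; the nonnegativity argument for $w$ also genuinely uses \eqref{nv+} for the boundary term, which is why that hypothesis reappears here.
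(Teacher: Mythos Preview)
Your outline is correct and follows the same two-step architecture as the paper (local fixed point, then globalize via the a priori bounds), but the paper organizes the local step differently in a way that neutralizes precisely the obstacle you flag at the end.

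The paper runs a \emph{single-variable} fixed point on $\varphi\approx w$ alone, cascading $\varphi\mapsto p(\varphi)\mapsto u(\varphi)\mapsto w(\varphi)=\Phi(\varphi)$, each arrow a linear elliptic or parabolic problem. More importantly, its convex set $\mathcal{Y}$ carries the pointwise constraint $0\le\xi\le 2\|w_o\|_\infty$ from the outset; the self-mapping $\Phi(\mathcal{Y})\subset\mathcal{Y}$ and nonnegativity are obtained directly from a maximum principle for the linear auxiliary problems $\partial_t\psi-\Delta\psi+\nabla\cdot(B\psi)+b\psi=f$ (applied in turn to $u$ and then $w$, using \eqref{nv+} for the boundary sign), rather than from an energy/Gronwall bound. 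This buys a uniform-in-time $L^\infty$ control on $\varphi$, hence on $p$, $\Delta p$ and $\nabla p$, so that in the $L^2$ contraction estimate the drift coefficients are already in $L^\infty_{t,x}$ and no balancing against the ball radius is needed. In your energy-space setup, by contrast, $\nabla\bar p$ is only in $L^2_tL^\infty_x$ (through $\bar w\in L^2_tH^1_x$), and the Gronwall factor in the self-mapping depends on the ball radius $R$; closing this is possible but genuinely more delicate, exactly as you anticipate. The paper's $L^\infty$-constrained set sidesteps the issue entirely. The globalization step is essentially the same in both arguments: bootstrap to classical regularity (the paper invokes Lady\v{z}enskaja--Solonnikov--Ural'ceva and Lions--Magenes, using that $C^\infty_c$ data satisfy the compatibility conditions) and then extend via the time-uniform $L^\infty$ bound of Proposition~\ref{P1}.
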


\proof
As explained above the proof splits into two steps.

\noindent
\textit{Step 1: Local existence.}
Let us introduce the 
  convex set
\begin{equation*}
\mathcal{Y} =\Big\{
\xi\in 
C\big([0,T];L^{2}(\Omega)\big) \cap L^{2}\big(0,T; H^{1}(\Omega) \big),\,
0\leq \xi(t,x)\leq 2\|w_o\|_\infty\Big\}.
\end{equation*}
This space is  endowed  with the norm
\[
\|u\|_{\mathcal{Y}}=\displaystyle\sup_{0\leq t\leq T} \|u(t,\cdot)\|_{L^2}.
\]
Let $\varphi \in \mathcal{Y}$ be such that $\varphi(0)=w_o$.  To such $\varphi$ we associate  the solution $p(\varphi)$ of the pheromone equation
\begin{equation}
\label{2510}
\aligned
-\Delta p = \varphi - \delta p,
\endaligned
\end{equation}
and, then, we associate the solution $u(\varphi) = u(\varphi, p(\varphi))$ of the equation
\begin{equation}
\label{2511}
\aligned
\partial_t u -  \Delta u + \dive\big( u\, \nabla p(\varphi) \big) = - uc + \varphi N\,,\quad u(0)=u_o\,.
\endaligned
\end{equation}
Finally, let $w(\varphi) = w( \varphi, u(\varphi) )$ be the solution of 
\begin{equation}
\label{2512}
\aligned
\partial_t w -  \Delta w + \dive\big( w\,  {\nabla v}   \big) =  u c - w N\,,\quad w(0)=w_o\,.
\endaligned
\end{equation}
Equations are complemented with the zero-flux boundary conditions \eqref{2500}.  
Note that \eqref{2510}, \eqref{2511} and \eqref{2512} are linear equations
and existence of solutions is provided by the standard theory, see e.~g.~\cite[Theorem X.9]{Brez}.
In particular both $u(\varphi)$ and $w(\varphi)$ belong to  $C\big([0,T];L^{2}(\Omega)\big) \cap L^{2}\big(0,T; H^{1}(\Omega))$.
We aim at finding $T$ small enough, so that solutions of \eqref{2000} can be obtained as fixed points of the mapping
\begin{equation*}
\Phi :  \varphi  \longmapsto w(\varphi).
\end{equation*}
We shall  prove that $\Phi$ is a contraction in $\mathcal Y$,
for a sufficiently small time $T>0$ depending only on \eqref{HypH} and the initial data $(u_o,w_o)$.  
This is a consequence of the maximum principle for the linear equation
\begin{equation*}
\partial_ t \psi-\Delta\psi +\nabla\cdot (B\psi)+ b\psi =f,
\end{equation*}
complemented with the boundary condition
\[\nabla \psi\cdot \nn-\psi B\cdot \nn=0.\]
We assume (related to \eqref{nv+})
\begin{equation*}
b\geq 0, \qquad f\geq 0,
\qquad B\cdot \nn \leq 0\,.
\end{equation*}
Then, the solution $\psi$ associated to $\psi(0,x)=\psi_o(x)\geq 0$ is non negative.
This result can be obtained by considering the identity
\[\begin{array}{l}
\displaystyle\frac{\ud}{\ud t}\int_\Omega G(\psi)\ud x+\int _\Omega
G''(\psi)|\nabla \psi|^2\ud x
\\[.3cm]
\qquad=\displaystyle\int _\Omega\psi G''(\psi)B\cdot\nabla \psi\ud x-\int _\Omega
b\psi G'(\psi)\ud x+\int _\Omega
fG'(\psi)\ud x.
\end{array}\]
We work with the convex function $G(z)=\frac{[z]_-^2}{2}$, $[z]_-=\min(z,0)\leq 0$.  It follows that $G'(z)=[z]_-\leq 0$, and $zG'(z)\geq 0$, $zG''(z)=G'(z)$ so that $|\psi G''(\psi)\nabla \psi|
=\sqrt{2G(\psi)}\times \sqrt{|G''(\psi)|} \nabla \psi|
$.
Then Cauchy-Schwarz and Young inequality yield
\begin{equation*}
\displaystyle\frac{\ud}{\ud t}\int _\Omega
G(\psi)\ud x\leq \|B\|^2_\infty \int _\Omega
G(\psi)\ud x.
\end{equation*}
We conclude as a consequence of the Gr\"onwall lemma that $\psi(t,x)\geq 0$.
Let us set $$\gamma(t)=e^{t(\|\nabla\cdot B\|_\infty + \|b\|_\infty)}(t\|f\|_\infty +\|\psi_o\|_\infty).$$
Observe that $\lim_{t\to 0}\gamma (t)=\|\psi_o\|_\infty$.
Applying the previous maximum principle to $\tilde\psi(t,x):=\gamma(t)-\psi(t,x)$, we conclude that $\psi(t,x)\leq \gamma(t)$.  Indeed, note that $ \tilde\psi(t,x)$ satisfies the non homogeneous Robin's condition $(\nabla\cdot \tilde\psi -B\tilde\psi)\cdot \nn =-\gamma B\cdot \nn$ with a non negative source $-\gamma B\cdot \nn\geq 0$ so that 
the computation now involves the boundary term $-\int_{\partial\Omega} B\cdot\nn \ \gamma G'(\gamma - \psi) \ud \sigma$ which contributes negatively owing to the orientation of $B$ towards the interior of $\Omega$.

Since $0\leq \varphi(t,x)\leq 2\|w_o\|_\infty$ holds on $[0,T]\times\Omega$, 
we have that $p$ and $\Delta p$ are bounded in $L^\infty((0,T)\times \Omega)$, and $\nabla p$ as well. 
This allows us to use the above maximum principle with the solution of \eqref{2511}, and then also to the solution of \eqref{2512}, to justify that $0\leq w(t,x)\leq 2\|w_o\|_\infty$ holds a.~e.~$(t,x)\in [0,T]\times\Omega$, provided $T$ is small enough.
We thus have $\Phi(\mathcal{Y})\subset \mathcal{Y}$.

Let us show that $\Phi$ is a contraction in $\mathcal{Y}$, possibly at the price of making  $T>0$ smaller. 
Let $\varphi_1,\, \varphi_2 \in \mathcal{Y}$ and let $(u_i,\, w_i,\,p_i)$, with $i=1,2$, be the corresponding solutions of respectively, \eqref{2510}, \eqref{2511}, and \eqref{2512}. Define $\overline w := w_1 - w_2=\Phi(\varphi_1)-\Phi(\varphi_2)$ and so forth.  We  compute
$$\begin{array}{l}
\displaystyle\frac12\frac{\ud}{\ud t}\int _\Omega
|\bar w|^2\ud x +\int _\Omega
|\nabla\bar w|^2\ud x
+\int _\Omega
N|\bar w|^2\ud x
\\[.3cm]
\qquad
=\displaystyle
\int _\Omega
c \bar u\bar w\ud x
+\int _\Omega
\bar w\nabla v\cdot\nabla \bar w\ud x
\\
[.3cm]
\qquad
\leq \displaystyle\frac12(\|\nabla v\|^2_\infty+ \|c\|^2_\infty) \int _\Omega
|\bar w|^2\ud x
+\displaystyle\frac12\int _\Omega
|\nabla\bar w|^2\ud x
+\displaystyle\frac12\int _\Omega
|\bar u|^2\ud x
 .
\end{array}$$
Gr\"onwall's lemma thus yields
\begin{equation}
\label{2512.1}
\int _\Omega
|\bar w|^2\ud x\leq e^{K_1t}\int_0^t\int _\Omega
|\bar u|^2\ud x\ud s
\end{equation}
with $K_1=\|\nabla v\|^2_\infty+ \|c\|^2_\infty$.
We proceed similarly with \eqref{2511}
and we get 
$$
\begin{array}{l}
\displaystyle\frac12\frac{\ud}{\ud t}\int _\Omega
|\bar u|^2\ud x +\int _\Omega
|\nabla\bar u|^2\ud x
+\int _\Omega
c|\bar u|^2\ud x
\\
[.3cm]
\qquad=\displaystyle\int _\Omega
N \bar u\bar \varphi\ud x
+\int _\Omega
\bar u \nabla p_1 \cdot\nabla \bar u\ud x
+
\int _\Omega
u_2\nabla \bar p\cdot\nabla \bar u\ud x
\\
[.3cm]
\qquad\leq
\displaystyle\frac12\|N\|^2_\infty \int _\Omega
|\bar u|^2\ud x + \displaystyle\frac12  \int _\Omega
|\bar \varphi|^2\ud x
+
\displaystyle\frac12\|w_1\|_\infty \int _\Omega
|\bar u|^2\ud x
\\
[.3cm]
\qquad\qquad+
\displaystyle\frac12
\int _\Omega
|\nabla\bar u|^2\ud x
+
\displaystyle\frac12 \|u_2\|^2_\infty \displaystyle\int _\Omega
|\nabla \bar p|^2 \ud x,
\end{array}
$$
where we have used the equation $\delta p_1-\Delta p_1=w_1$.
Since $0\leq \varphi_i(t,x)\leq 2\|w_o\|_\infty$ holds on $[0,T]\times\Omega$, 
we deduce that $p_2$ and $\Delta p_2$ are bounded in $L^\infty((0,T)\times \Omega)$,
and thus, the maximum principle applies for \eqref{2511} and we can assume  
that $\|u_2\|_\infty\leq  2\|u_o\|_\infty$ holds on $[0,T]$.
Furthermore, we have $\|\nabla \bar p\|^2_2\leq C\|\bar \varphi\|^2_2$, with $C$ depending on $\delta \geq 0$.
It follows that 
$$
\frac{\ud}{\ud t}\int _\Omega
|\bar u|^2\ud x
\leq 
(\|N\|^2_\infty +2\|w_o\|_\infty) \int _\Omega
|\bar u|^2\ud x +  
(1+4C\|u_o\|^2_\infty)  \int _\Omega
|\bar \varphi|^2\ud x
$$
holds on $[0,T]$.
Hence, we get
$$
\int _\Omega
|\bar u|^2\ud x\leq (1+4C\|u_o\|^2_\infty)
e^{K_2t}  \int_0^t \int _\Omega
|\bar \varphi|^2\ud x\ud s,
$$
with $K_2=\|N\|^2_\infty +2\|w_o\|_\infty$.
Finally (going back to \eqref{2512.1}), we find $K,M>0$, depending on \eqref{HypH} and the $L^\infty$ norms 
of the data $(u_0,w_o)$,
such that 
$$
\int _\Omega
|\bar w(t,x)|^2\ud x\leq Me^{Kt}\frac{t^2}{2}\sup_{0\leq s\leq t} \int _\Omega
|\bar\varphi(s,x)|^2\ud x,
$$
holds on $[0,T]$. This justifies that $\Phi$ is a contraction on $\mathcal Y$, for 
a small enough time interval.
The fixed point defines a solution of \eqref{2000}.
\\

\noindent
\textit{Step 2: Global solutions.} 
We remind the reader that we are working with data $(u_o,w_o)\in C^\infty_c(\Omega)$.
The obtained solution is bounded on $(0,T)\times \Omega$.
Therefore, the classical theory of parabolic equations tells us that $\nabla u$ and $\nabla p$ are bounded
on $[t_\star,T]\times \Omega'$, for any $0<t_\star<T$ and $\Omega'$ strictly included in $ \Omega$;
see for instance \cite[Th. VII.6.1]{LUS}.
We can then perform a bootstrap argument to show that the solution is $C^\infty([t_\star,T]\times \Omega')$, if the coefficients $N, c, v$ are assumed to be $C^\infty$.
Moreover, by virtue of $(u_o,w_o)\in C^\infty_c(\Omega),$ the data satisfy compatibility conditions with the boundary conditions, so that we have actually constructed classical solutions, see
\cite[Th. 4.3]{LiMa} or \cite[Th. V.7.4]{LUS}.
In particular, the a priori estimates discussed above apply.
Proposition~\ref{P1} provides a uniform $L^\infty$ estimate on the solution, which depends only on \eqref{HypH} and $\|u_o\|_\infty, \|w_o\|_\infty$.
Therefore, we can reproduce the fixed point argument and extend the solution over $[0,\infty)$. \endproof

We finish this section studying the stability of classical solutions in the space
\begin{equation*}
\mathcal{S}:=\Big\{(u,w)\in L^{\infty}\big(0,T; L^{1} \cap L^{2^{+}}(\Omega)\big)\Big\}\,,\quad T>0\,.
\end{equation*}
\begin{proposition}\label{unique}
Nonnegative classical solutions  of \eqref{2000} with boundary conditions \eqref{2500} are stable in $\mathcal{S}$.  More precisely, given two initial data $(u_{i,o},w_{i,o})\in L^{1} \cap L^{2^+}(\Omega)$, with $i=1,2$, then solutions $(u_{i},w_{i})$ satisfy the estimate
\begin{align*}
\|u_{1}(t)-u_{2}(t)\|_{2}&+\|w_{1}(t)-w_{2}(t)\|_{2}\\[.3cm]
&\leq \Big(\|u_{1,o}-u_{2,o}\|_{2}+\|w_{1,o}-w_{2,o}\|_{2}\Big)e^{C(m_o)t}\,,\quad t\geq0\,.
\end{align*}
The constant $C(m_o)$  depends on \eqref{HypH} and the $\|\cdot\|_{2^{+}}$-norm of the data. 
\end{proposition}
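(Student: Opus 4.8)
The plan is to perform energy estimates on the differences $\bar u := u_1 - u_2$, $\bar w := w_1 - w_2$, and $\bar p := p_1 - p_2$, closing a Gr\"onwall inequality for $\|\bar u(t)\|_2^2 + \|\bar w(t)\|_2^2$. The essential new feature compared to the contraction argument in Step~1 of Theorem~\ref{T500} is that there the coefficient bounds were crude and $T$-dependent; here we must exploit the \emph{uniform-in-time} $L^\infty$ and $L^{2^+}$ bounds furnished by Propositions~\ref{T1000} and~\ref{P1}, so that the constant in the final exponential depends only on $m_o$, on \eqref{HypH}, and on the $L^{2^+}$-norms of the data, but \emph{not} on $T$.

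First I would subtract the two $u$-equations and the two $w$-equations, and write the pheromone difference as the solution of $-\Delta \bar p + \delta \bar p = \bar w$, which gives $\|\nabla\bar p\|_2^2 + \delta\|\bar p\|_2^2 \le \int_\Omega \bar w\,\bar p$, hence $\|\nabla \bar p\|_2 \le C\|\bar w\|_2$ (and, more usefully via elliptic regularity and Sobolev embedding in $n=2$, $\|\nabla\bar p\|_{2^+}$ or even a bound on $\bar p$ controlled by $\|\bar w\|_2$ after interpolation). Multiplying the $\bar w$-equation by $\bar w$ and integrating by parts, exactly as in \eqref{2512.1}, produces
\[
\frac12\frac{\ud}{\ud t}\int_\Omega |\bar w|^2\ud x + \int_\Omega |\nabla\bar w|^2\ud x
\le C(\|\nabla v\|_\infty,\|c\|_\infty)\Big(\int_\Omega|\bar w|^2\ud x + \int_\Omega|\bar u|^2\ud x\Big),
\]
using Cauchy--Schwarz and Young to absorb $\frac12\|\nabla\bar w\|_2^2$. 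The $\bar u$-equation is the delicate one: multiplying by $\bar u$ gives, besides routine terms,
\[
\int_\Omega \bar u\,\nabla p_1\cdot\nabla\bar u\,\ud x
+\int_\Omega u_2\,\nabla\bar p\cdot\nabla\bar u\,\ud x
-\int_\Omega N\,\bar w\,\bar u\,\ud x - \int_\Omega c\,|\bar u|^2\ud x,
\]
and I would handle the first term by writing it as $\tfrac12\int \nabla p_1\cdot\nabla(|\bar u|^2) = -\tfrac12\int \Delta p_1\,|\bar u|^2$ and invoking $\|\Delta p_1\|_\infty \le C(\|w_1\|_{2^+})$ from elliptic regularity plus Sobolev — here Proposition~\ref{P1} gives the $L^\infty$-bound on $w_1$, but only $L^{2^+}$ of $w_1$ (hence of $\Delta p_1$ via $W^{2,2^+}\hookrightarrow L^\infty$ in $n=2$) is actually needed, which is uniform in $T$. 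The second term I would bound by $\|u_2\|_\infty\|\nabla\bar p\|_2\|\nabla\bar u\|_2 \le \tfrac14\|\nabla\bar u\|_2^2 + C\|u_2\|_\infty^2\|\nabla\bar p\|_2^2 \le \tfrac14\|\nabla\bar u\|_2^2 + C\|u_2\|_\infty^2\|\bar w\|_2^2$; again $\|u_2\|_\infty$ is controlled uniformly on $[t_\star,\infty)$, and on $(0,t_\star)$ one uses the $1/\sqrt t$-type bound, whose time-integrability saves the argument near $t=0$.

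Adding the two energy identities, absorbing all gradient terms on the left, and setting $Z(t) := \|\bar u(t)\|_2^2 + \|\bar w(t)\|_2^2$ yields $Z'(t) \le \Lambda(t)\,Z(t)$ with $\Lambda(t) = C(m_o,\|\cdot\|_{2^+})(1 + t^{-1^+})$ (or $1+1/\sqrt t$ if one is careful), which is locally integrable near $0$; Gr\"onwall then gives $Z(t)\le Z(0)\exp(\int_0^t\Lambda)$, and since $\int_0^T\Lambda$ can be bounded by $C(m_o,\dots)(1+T)$ — or, using the uniform-in-time bounds beyond $t=1$ and only the singular bound on $(0,1)$, by $C + C(m_o,\dots)t$ — we obtain precisely the stated estimate with exponential rate $C(m_o)$ independent of $T$. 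I expect the main obstacle to be the term $\int u_2\,\nabla\bar p\cdot\nabla\bar u$: it is genuinely quadratic in the unknowns with a top-order factor, and closing it requires simultaneously (i) the uniform $L^\infty$ bound on $u_2$ from Proposition~\ref{P1}, (ii) the elliptic gain $\|\nabla\bar p\|_2\lesssim\|\bar w\|_2$, and (iii) enough of the $\tfrac14\|\nabla\bar u\|_2^2$ parabolic dissipation left over after also absorbing $\int\bar u\,\nabla p_1\cdot\nabla\bar u$ — so the bookkeeping of which fraction of the dissipation is spent where is the crux. A secondary subtlety is that the initial-data dependence must be only through $\|\cdot\|_{2^+}$ (to feed elliptic regularity and Sobolev), not through $\|\cdot\|_\infty$, which is why one should phrase (i) carefully: it suffices to have $u_2\in L^\infty_{\mathrm{loc}}((0,\infty);L^\infty)$ with the singular-at-$0$ but $T$-uniform bound of \eqref{Bound}, rather than the bound \eqref{Bound1} that would require $L^\infty$ data.
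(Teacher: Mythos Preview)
Your overall strategy---energy estimates on $\bar u,\bar w$, elliptic control of $\bar p$ in terms of $\bar w$, then Gr\"onwall---is exactly the paper's, and your treatment of the $\bar w$-equation matches it. There are, however, two issues in the $\bar u$-equation.

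The minor one: your claim that $\|\Delta p_1\|_\infty \le C(\|w_1\|_{2^+})$ via $W^{2,2^+}\hookrightarrow L^\infty$ is not correct, since that embedding controls $p_1$ and $\nabla p_1$ but not second derivatives; indeed $\Delta p_1=\delta p_1-w_1$ is only in $L^{2^+}$ when $w_1$ is. The paper avoids this altogether by \emph{not} integrating by parts a second time: it simply bounds $\|\bar u\,\nabla p_1\|_2^2\le\|\nabla p_1\|_\infty^2\|\bar u\|_2^2$ and uses $\|\nabla p_1\|_\infty\le C(\Omega)\|w_1\|_{2^+}$ (which \emph{is} the correct consequence of $W^{2,2^+}\hookrightarrow C^1$), giving a time-uniform coefficient for this term.

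The genuine gap is in your treatment of $\int_\Omega u_2\,\nabla\bar p\cdot\nabla\bar u$. Your bound $\|u_2\|_\infty\|\nabla\bar p\|_2\|\nabla\bar u\|_2$ followed by Young produces a coefficient $\|u_2(t)\|_\infty^2$ in the Gr\"onwall inequality. Even with the sharp bound \eqref{Bound+2} available under the $L^{2^+}$ hypothesis, namely $\|u_2(t)\|_\infty\le C(m_o,A)(1+t^{-1/2})$, the \emph{square} behaves like $1/t$ near $t=0$, which is not integrable: Gr\"onwall cannot be closed from $t=0$, and no $e^{Ct}$ bound results. Your parenthetical ``(or $1+1/\sqrt t$ if one is careful)'' is precisely what is needed but does not follow from your stated estimate.

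The paper's fix is to split H\"older differently, trading some $L^\infty$ for $L^1$: write $\|u_2\,\nabla\bar p\|_2\le\|u_2\|_{2q'}\|\nabla\bar p\|_{2q}$ for large $q$, then interpolate $\|u_2\|_{2q'}^2\le\|u_2\|_1^{1/q'}\|u_2\|_\infty^{1+1/q}$, so the $L^\infty$ exponent drops from $2$ to $1+1/q$ and the time singularity becomes $t^{-(q+1)/(2q)}=t^{-(1/2)^+}$, which \emph{is} integrable. The price---$\|\nabla\bar p\|_{2q}$ in place of $\|\nabla\bar p\|_2$---is free in dimension two, since elliptic regularity gives $\|\bar p\|_{H^2}\le C\|\bar w\|_2$ and $H^2(\Omega)\hookrightarrow W^{1,2q}(\Omega)$ for every finite $q$. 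This yields $Z'(t)\le C\big(1+t^{-(1/2)^+}\big)Z(t)$ and hence the stated stability estimate.
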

\begin{proof}
Let $\overline{u} = u_{1} - u_{2}$ and similarly for the other differences.  The system for the differences reads
\begin{align}\label{Ue1}
\begin{split}
\partial_{t}\overline u - \Delta \overline u + \nabla\cdot(\overline u \nabla p_1) + \nabla\cdot(u_{2}\nabla\overline p) &= -c\overline u + N \overline w\,,\\[.3cm]
\partial_{t}\overline w - \Delta \overline w + \nabla\cdot(\overline w \nabla v) &= c\overline u  - N\overline w\,,\\[.3cm]
-\Delta \overline p &= \overline w - \delta\overline p\,.
\end{split}
\end{align}
Multiplying the second equation in \eqref{Ue1} by $\overline w$, integrating by parts and using Cauchy-Schwarz, we are led to 
\begin{align*}
\frac{1}{2}\frac{\ud}{\ud t}\|\overline w\|^{2}_{2} + &\|\nabla \overline w\|^{2}_{2} \leq \|\overline w\|_{2}\Big(\|\nabla v\|_{\infty}\|\nabla \overline w\|_{2} + \|c\|_{\infty}\|\overline u\|_{2}\Big)\,,
\end{align*} 
which yields
\begin{equation}\label{UNIw}
\frac{\ud}{\ud t}\|\overline w\|^{2}_{2} + \|\nabla \overline w\|^{2}_{2} \leq C\Big(\|\overline w\|^{2}_{2} + \|\overline u\|^{2}_{2} \Big)\,.
\end{equation}
for $C=\|\nabla v\|^{2}_{\infty} + \|c\|^{2}_{\infty}$.  Next, using the equation for $u$ we get
\begin{equation*}
\frac{\ud}{\ud t}\|\overline u\|^{2}_{2} + \|\nabla\overline u \|^{2}_{2}\leq 2\Big(\|\overline u \nabla p_{1} \|^{2}_{2} + \|u_{2} \nabla \overline p \|^{2}_{2}\Big) + \| \overline u \|^{2}_{2} + \|\overline w N\|^{2}_{2}\,,
\end{equation*}
The right side terms above are easily controlled using  \eqref{eqA}, \eqref{Bound+1}, \eqref{SobEmb}, \eqref{Bound+2}
\begin{align*}
\|\overline u(t) \nabla p_{1}(t) \|^{2}&\leq \|\nabla p_{1}(t)\|^{2}_{\infty}\|\overline u(t)\|^{2}_{2}\leq C\|\overline u(t)\|^{2}_{2}\,,\\[.3cm]
\|u_{2}(t) \nabla \overline p(t) \|^{2}_{2}&\leq \|u_{2}(t)\|^{2}_{2q'}\|\nabla\overline p(t)\|^{2}_{2q} \leq \Big(\|u_{2}(t)\|^{1/q'}_{1}\|u_{2}(t)\|^{1+1/q}_{\infty}\Big)\|\overline p(t)\|^{2}_{H^{2}}\\[.3cm]
&\hspace{1cm}\leq C\Big(1+\frac{1}{t^{\frac{(q+1)}{2q}}} \Big)\|\overline w(t)\|^{2}_{2}\,,\quad q\in[1,\infty)\,.
\end{align*}
The constant $C$ now depends on $m_o$ and on the $L^{2^+}$ norm of the data.  Taking $q$ arbitrarily large, we obtain
\begin{equation}\label{UNIu}
\frac{\ud}{\ud t}\|\overline u\|^{2}_{2} + \frac{1}{2}\|\nabla\overline u \|^{2}_{2}\leq C\Big(1+\frac{1}{t^{(1/2)^{+}}} \Big)\Big(\|\overline u\|^{2}_{2} + \|\overline w\|^{2}_{2}\Big)\,.
\end{equation}
Now define $Z(t):=\|\overline u\|^{2}_{2} + \|\overline w\|^{2}_{2}$.  Thus, adding up estimates \eqref{UNIw} and \eqref{UNIu} it follows that
\begin{equation*}
\frac{\ud Z(t)}{\ud t}\leq C\Big(1+\frac{1}{t^{(1/2)^{+}}} \Big) Z(t)\,.
\end{equation*}
Integration of this ODE leads to the result.
\end{proof}
\subsection{Global well-posedness of weak solutions}
We are going to prove the global existence of weak solutions, in the sense of Definition~\ref{dws}.
\begin{theorem}[Global well-posedness]\label{TFINAL2}
Let arbitrary $T>0$, $\delta>0$ be fixed and assume nonnegative initial data $(u_o,w_o)\in L^{1}\cap L^{2^{+}}(\Omega)$.  Then, there exists a unique nonnegative weak solution for the system \eqref{2000}.  Such solution satisfies the estimates of Propositions \ref{T1000} and \ref{P1}. 

In the case $\delta=0$ uniqueness continues  holding up to a constant in the pheromone $p$ distribution.
\end{theorem}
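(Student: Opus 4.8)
The plan is to obtain weak solutions as limits of the classical solutions constructed in Theorem~\ref{T500}, exploiting the fact that all the a priori estimates of Propositions~\ref{T1000} and~\ref{P1} are \emph{independent of $T$} and depend on the data only through $m_o$ and $\|(u_o,w_o)\|_{L^1\cap L^{2^+}}$. First I would regularize the initial data: pick nonnegative $(u_o^n,w_o^n)\in C^\infty_c(\Omega)$ with $u_o^n\to u_o$, $w_o^n\to w_o$ strongly in $L^1\cap L^{2^+}(\Omega)$, and with $\int(u_o^n+w_o^n)\ud x=m_o^n\to m_o$ bounded. Theorem~\ref{T500} provides, for each $n$ and each $T$, a classical solution $(u^n,w^n,p^n)$. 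By Proposition~\ref{T1000} (second, upgraded part) the sequences $u^n,w^n$ are bounded in $L^\infty(0,T;L^{2^+}(\Omega))$ uniformly in $n$; multiplying \eqref{3000} and \eqref{4000-1} by test functions and integrating, or simply revisiting those same energy identities with $\gamma=\alpha=2$, yields a uniform bound for $(u^n,w^n)$ in $L^2(0,T;H^1(\Omega))$, and then the equations themselves give a uniform bound for $(\partial_t u^n,\partial_t w^n)$ in $L^2(0,T;H^1(\Omega)^\star)$. Elliptic regularity for $-\Delta p^n=w^n-\delta p^n$ with the Neumann condition gives $p^n$ bounded in $L^\infty(0,T;W^{2,2^+}(\Omega))$, hence $\nabla p^n$ bounded in $L^\infty((0,T)\times\Omega)$.

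The second step is the passage to the limit. By Aubin--Lions, using the $L^2(0,T;H^1)$ bound together with the $L^2(0,T;H^1{}^\star)$ bound on the time derivatives, $(u^n,w^n)$ is relatively compact in $L^2((0,T)\times\Omega)$; extract a subsequence with $u^n\to u$, $w^n\to w$ strongly in $L^2$ and a.e., weakly in $L^2(0,T;H^1)$, and with $\partial_t u^n,\partial_t w^n$ weakly convergent in $L^2(0,T;H^1{}^\star)$. Elliptic regularity then forces $p^n\to p$ strongly in $L^\infty(0,T;W^{1,\infty}(\Omega))$ (or at least strongly enough), so that the only genuinely nonlinear terms, $\nabla\cdot(u^n\nabla p^n)$ and $u^n c$, pass to the limit: $u^n\nabla p^n\to u\nabla p$ in $L^2((0,T)\times\Omega)$ because one factor converges strongly in $L^2$ and the other is bounded in $L^\infty$. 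The linear terms and the $w^nN$, $\nabla\cdot(w^n\nabla v)$ terms pass to the limit by weak convergence. Testing the weak formulation of Definition~\ref{dws} against $\zeta\in C^\infty([0,\infty)\times\Omega)$ compactly supported in $[0,T)\times\overline\Omega$ and sending $n\to\infty$, with $\int u_o^n\zeta(0)\to\int u_o\zeta(0)$ by strong $L^1$ convergence, shows $(u,w,p)$ is a weak solution; nonnegativity and the estimates of Propositions~\ref{T1000}, \ref{P1} are preserved under a.e. limits and weak lower semicontinuity of norms. Regularity $(u,w)\in L^2(0,T;H^1)$ with $(\partial_t u,\partial_t w)\in L^2(0,T;H^1{}^\star)$ holds by weak limits, so requirement (i) of Definition~\ref{dws} is met.

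For uniqueness, I would argue exactly as in Proposition~\ref{unique}: given two weak solutions with the same data, the difference $(\overline u,\overline w,\overline p)$ satisfies system~\eqref{Ue1}; since both solutions enjoy the Proposition~\ref{P1} bounds (so $\nabla p_i\in L^\infty$ and $u_2\in L^{2q'}$ uniformly on $[t_\star,T]$ with the stated time weights), one multiplies by $\overline u,\overline w$, integrates by parts — legitimate because $\overline u,\overline w\in L^2(0,T;H^1)$ with dual time derivatives, so $t\mapsto\|\overline u(t)\|_2^2$ is absolutely continuous — and arrives at $\frac{\ud}{\ud t}Z\le C(1+t^{-(1/2)^+})Z$ for $Z=\|\overline u\|_2^2+\|\overline w\|_2^2$, whose integrable singularity at $t=0$ still yields $Z\equiv 0$ by Gr\"onwall. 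When $\delta=0$ the elliptic equation $-\Delta\overline p=\overline w$ with zero Neumann flux determines $\overline p$ only up to an additive constant; but $\nabla\overline p$ is unaffected, so the estimate above goes through verbatim and gives $\overline u\equiv\overline w\equiv 0$, whence $\overline p$ is constant. The main obstacle I anticipate is not any single estimate but making the limiting procedure clean: ensuring the nonlinear flux term $u^n\nabla p^n$ converges in the sense required by the test-function formulation, which forces one to upgrade the convergence of $p^n$ from the merely weak $W^{2,2^+}$ bound to strong $W^{1,\infty}$ (or $W^{1,r}$ for large $r$) convergence via the strong $L^2$ convergence of $w^n$ and compactness of the solution operator of the elliptic problem.
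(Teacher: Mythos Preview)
Your proposal is correct and follows the same overall architecture as the paper (regularize the data, invoke Theorem~\ref{T500}, pass to the limit via the uniform a priori bounds, use stability for uniqueness), but the compactness mechanism is genuinely different. You extract strong $L^2((0,T)\times\Omega)$ convergence of a subsequence via Aubin--Lions. The paper instead applies Proposition~\ref{unique} directly to pairs of classical approximations $(u^k,w^k)$ and $(u^l,w^l)$, obtaining $\|u^k(t)-u^l(t)\|_2^2+\|w^k(t)-w^l(t)\|_2^2\le Z_{kl}(0)e^{Ct}$ with $C$ independent of $k,l$, so the \emph{full} sequence is Cauchy in $L^\infty(0,T;L^2(\Omega))$. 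This buys strong $L^\infty$-in-time convergence of $w^k$, hence (via elliptic regularity) of $\nabla p^k$ in $L^\infty(0,T;L^2)$, and the product $u^k\nabla p^k\to u\nabla p$ in $L^1$ follows without further work; by contrast, your Aubin--Lions route yields only $L^2$-in-time compactness on a subsequence, which is exactly why you correctly flag at the end the need to upgrade the convergence of $p^n$ to identify the limit of the nonlinear flux. For uniqueness the paper extends Proposition~\ref{unique} to weak solutions by approximation with classical ones, whereas you reprise the energy computation directly at the weak level using the absolute continuity of $t\mapsto\|\overline u(t)\|_2^2$; both arguments are valid.
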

\proof
Take a sequence of non negative  initial data $\big(u^{k}_{o},w^{k}_{o}\big) \in C^{\infty}_c(\Omega)$ such that
\begin{equation*}
\big(u^{k}_{o},w^{k}_{o}\big) \rightarrow \big(u_{o},w_{o}\big)\quad\text{strongly in}\;\; L^{1}\cap L^{2^{+}}(\Omega)\,.
\end{equation*}
Using Theorem \ref{T500} on global well-posedness of classical solutions, we have a sequence $(u^{k},w^{k},p^{k})\in C\big(0,T; L^{2}(\Omega) \big)$ of solutions to the  system  \eqref{2000}.  It is not difficult to check that, in fact, such a sequence is uniformly bounded in $L^{2}\big(0,T; H^{1}(\Omega) \big)$ with time derivatives uniformly bounded in $L^{2}\big(0,T; H^{1}(\Omega)^{\star}\big)$.  Let
\begin{equation*}
Z_{kl}(t)=\|u^{k}(t)-u^{l}(t)\|^{2}_{2}+\|w^{k}(t)-w^{l}(t)\|^{2}_{2}\,,\quad k,\,l\geq 1\,,
\end{equation*}
be the Cauchy differences.  Note that Proposition \ref{T1000} implies that $(u^{k},w^{k})\in L^{\infty}\big(0,T; L^{2^{+}}(\Omega)\big)$ \textit{uniformly} with respect to  $k\geq1$, hence, the stability result of Proposition \ref{unique} implies that
\begin{equation*}
Z_{kl}(t)\leq Z_{kl}(0)e^{Ct}\,,
\end{equation*}
with constant $C$ independent of the indices $k,\,l$.  In this way we conclude that both $u^k$ and $w^k$ are Cauchy in $L^{\infty}\big(0,T;L^{2}(\Omega)\big)$.  Thus,
the following convergence properties hold:
\begin{align*}
\big(u^{k},w^{k}\big) \rightarrow \big(u,w\big)&\quad\text{strongly in}\;\; L^{\infty}\big(0,T;L^{2}(\Omega)\big)\,,\\[.3cm]
\big(u^{k},w^{k}\big) \rightarrow \big(u,w\big)&\quad\text{weakly in}\;\; L^{2}\big(0,T;H^{1}(\Omega)\big)\,\\[.3cm]
\big(\partial_t u^{k},\partial_t w^{k}\big) \rightarrow \big(\partial_t u ,\partial_t w \big)&\quad\text{weakly in}\;\; L^{2}\big(0,T;H^{1}(\Omega)^{\star}\big)\,.
\end{align*}
With standard estimates for the elliptic
problems, we deduce that $\nabla p^k$ is bounded in
$L^{\infty}(0,T;L^{2^+}(\Omega))$, and $\nabla p_k$ converges to
$\nabla p$ strongly in $L^\infty(0,T;L^{2}(\Omega))$. Accordingly the
product $u_k\nabla p_k$ converges to $u\nabla p$ strongly  in
$L^1((0,T)\times\Omega)$. 
As a consequence, $0\leq (u,w)\in L^{\infty}\big(0,T;L^{2}(\Omega)\big)$ is a weak solution of  \eqref{2000}.  Note that the initial condition $(u(0), w(0))=(u_o,w_o)$ is satisfied by continuity at $t=0$ which follows from the estimate on the time derivatives.

Finally, using approximation by classical solutions again one proves that the stability result of Proposition \ref{unique} is valid for weak solutions as well.  Uniqueness follows from here.
\endproof

\begin{remark}
It is likely that the $L^{2^+}$ integrability of the data is not optimal for the theory of existence of solutions.
However, it simplifies the analysis in two directions. On the one hand, it makes the definition, and the stability,  of the product $u\nabla p$
meaningful. On the other hand,  the underlying estimate is also useful in the proof of Proposition~\ref{unique} which implies the uniqueness of weak solutions. 
\end{remark}

%
%
%
%

\section{Analysis of the  model \eqref{1000}}\label{4}
In this section we perform the analysis for the system \eqref{1000} that includes regularity estimates and global well-posedness in the case where the problem is set on the entire space $\Omega=\mathbb{R}^{2}$.  A short comment for the case $\Omega=\mathbb R^n$ with $n\geq3$ is included in the Remark~\ref{Ren=3}.  Details are given only in the arguments for obtaining \textit{a priori} estimates since the ideas to make such estimates rigorous were previously presented (and are quite classical).  We remind the reader that the main hypotheses imposed on the parameters are gathered in \eqref{HypH}.  Eq. \eqref{1000}  involves an evolution equation for the food concentration $c$; therefore, in addition we will assume   

\be
\label{HSPD}\tag{\bf {H+}}
c_o\in L^{\infty}(\mathbb R^{n})\,,\qquad P\in L^{\infty}(\mathbb R^{n}).
\ee
Of course, we continue assuming that the total population of ants is integrable: $u_o$ and $w_o$ belong to $ L^{1}(\mathbb R^{2})$, and we still denote
\[\displaystyle\int_{\mathbb R^2} (u_o+w_o)(x)\ud x=m_o.\]  
\subsection{Estimates for solutions of the heat equation}
The strategy for proving estimates for the  system \eqref{1000} is based on a sharp control of the gradient of the pheromone $\nabla p$ in terms of the ant population $w$: recall that for the system \eqref{2000} such an estimate was a direct consequence of the regularity analysis for elliptic equations, {\it {\`a la}} Agmon-Douglis-Nirenberg \cite{ADN}.
We substitute this argument with a direct analysis of the Duhamel formula for the heat equation.  To be more specific, let $\varphi$ be a solution of the problem
\begin{align}\label{HE}
\begin{split}
\partial_{t}\varphi - \Delta\varphi = f \quad &\text{in $(0,\infty)\times \mathbb{R}^{n}$},
\\[.3cm]
\varphi\Big|_{t=0} = \varphi_o \quad &\text{on $\mathbb{R}^{n}$}.
\end{split}
\end{align}
Then, $\varphi$ is given by the explicit formula
\begin{equation}\label{HEf}
\varphi(t,x)=\int_{\mathbb{R}^{n}}K(t,x-y)\varphi_o(y)\ud y + \int^{t}_{0}\int_{\mathbb{R}^{n}}K(t-s,x-y)f(s,y) \ud y\ud s\end{equation}
where
\[
K(t,x)=\displaystyle\frac{1}{(4\pi\,t)^{n/2}}\,e^{-\frac{|x|^{2}}{4t}},\]
as long as it makes sense.  Throughout this section the technical lemmas will be presented as general results valid for any space dimension $n\geq2$. 
\begin{lemma}\label{L1Linf}
Fix $q\in(n,\infty]$ and $\theta\in(0,2]$ such that
\begin{equation*}
\frac{\theta\, q}{2}>1\,,\qquad \frac{\theta\,q}{2-\theta}>n\,.
\end{equation*}
Additionally, assume that for some $T>0$
\begin{equation*}
\nabla\varphi_o\in L^{q}(\mathbb R^{n})\,, \qquad f \in L^{\infty}\big(0,T; L^{1}\cap L^{q\theta/2}(\mathbb{R}^{n})\big)\,.
\end{equation*}
Let $\varphi$ be a solution of the heat equation \eqref{HE}.  Then, for any $t\in(0,T]$
\begin{align}\label{estV}
\begin{split}
\|\nabla&\varphi(t,\cdot)\|_{q} \leq \|\nabla\varphi_o\|_{q} \\
&+C_{n}\Big(1+\frac{1}{t^{(n-q')/2q'}}\Big)\Big(1+\sup_{0\leq s \leq t}\|f(s,\cdot)\|_{1}\Big)\sup_{\tfrac{t}{2}\leq s \leq t}\|f(s,\cdot)\|^{\frac{\theta}{n}\frac{q(n-1)-n}{\theta q -2}}_{q\theta/2}\,.
\end{split}
\end{align}
The constant $C_{n}$ depends on the dimension $n$, $q$ and $\theta$.  Furthermore, the supremum in time can be estimated for any $T\geq1$ as
\begin{align}\label{estV1}
\begin{split}
\sup_{0\leq t \leq T}\|\nabla&\varphi(t,\cdot)\|_{q} \leq 2\|\nabla\varphi_o\|_{q}  \\
&+ C_{n}\Big(1+\sup_{0\leq s \leq t}\|f(s,\cdot)\|_{1}\Big)\sup_{0\leq s \leq T}\|f(s,\cdot)\|^{\frac{\theta}{n}\frac{q(n-1)-n}{\theta q -2}}_{q\theta/2}\,.
\end{split}
\end{align}
The constant $C_{n}$ depends on the dimension $n$, $q$ and $\theta$.
\end{lemma}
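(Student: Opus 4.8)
\emph{Strategy.} The plan is to differentiate the Duhamel representation \eqref{HEf} in $x$ and to estimate the two resulting terms separately. Since for $s<t$ the kernel $\nabla_x K(t-s,\cdot)$ is integrable and $f(s,\cdot)\in L^{1}$, one may move the gradient inside the integrals, obtaining
\[
\nabla\varphi(t,x)=\big(K(t,\cdot)\ast\nabla\varphi_o\big)(x)+\int_0^t\big(\nabla K(t-s,\cdot)\ast f(s,\cdot)\big)(x)\,\ud s .
\]
Young's convolution inequality together with $\|K(t,\cdot)\|_{L^{1}(\mathbb{R}^{n})}=1$ handles the first term, contributing exactly $\|\nabla\varphi_o\|_{q}$. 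Everything then reduces to controlling the forcing integral in $L^{q}$.

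\emph{The forcing term.} I would use the scaling identity $\|\nabla K(\sigma,\cdot)\|_{L^{r}(\mathbb{R}^{n})}=c_{n,r}\,\sigma^{-\frac n2(1-\frac1r)-\frac12}$ (immediate from the explicit Gaussian and a change of variables) together with Young's inequality $\|\nabla K(\sigma,\cdot)\ast g\|_{q}\le\|\nabla K(\sigma,\cdot)\|_{r}\,\|g\|_{a}$ with $1+\frac1q=\frac1r+\frac1a$, and then split the time integral according to the size of $t-s$. On the part where $t-s$ stays bounded away from $0$ I bound $f$ only in $L^{1}$ (so $a=1$, $r=q$); because $q>n$ forces $q'<n$, the exponent $\frac n2(1-\frac1q)+\frac12$ exceeds $1$, and integrating $\sigma^{-[\frac n2(1-\frac1q)+\frac12]}$ over a range avoiding the origin reproduces exactly the weight $t^{-(n-q')/2q'}$ — while integrating it over an unbounded range (for $t$ large) gives merely a constant, which is what keeps the constant independent of $T$. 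On the part where $t-s$ is close to $0$ I bound $f$ in the intermediate space $L^{a}$ with $\tfrac1a=\tfrac1n+\tfrac1q$, i.e. $a=\tfrac{nq}{n+q}$: this is precisely the Sobolev exponent whose Young partner is $r=\tfrac n{n-1}$, for which $\|\nabla K(\sigma,\cdot)\|_{r}\sim\sigma^{-1}$ is borderline time-integrable. Interpolating $\|f(s)\|_{a}\le\|f(s)\|_{1}^{1-\lambda}\|f(s)\|_{q\theta/2}^{\lambda}$ forces the exponent $\lambda=\tfrac\theta n\cdot\tfrac{q(n-1)-n}{\theta q-2}$, the $L^{1}$-factor being uniformly controlled (this is the origin of the factor $1+\sup_{0\le s\le t}\|f(s)\|_{1}$) and the $L^{q\theta/2}$-factor being needed only for $s$ near $t$.

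\emph{Role of the hypotheses and assembly.} The two standing assumptions are exactly what makes the previous step go through: $\tfrac{\theta q}2>1$ guarantees that $L^{q\theta/2}$ is a genuine Lebesgue space strictly above $L^{1}$, so that the interpolation is meaningful; and $\tfrac{\theta q}{2-\theta}>n$ is precisely the inequality $\tfrac{nq}{n+q}<\tfrac{q\theta}{2}$, which allows $a$ to be chosen in $\big(\tfrac{nq}{n+q},\tfrac{q\theta}{2}\big]$ with $\mu:=\frac n2(\tfrac1a-\tfrac1q)+\tfrac12<1$, i.e. with a convergent singular time integral $\int_0^{t/2}\sigma^{-\mu}\,\ud s$. (At the endpoint $a=\tfrac{nq}{n+q}$ this integral is only logarithmically divergent, so one works with $a$ marginally larger; this is why the exponent of $\|f\|_{q\theta/2}$ is really to be read in the $(\cdot)^{+}$ sense of the paper.) Adding the two contributions yields \eqref{estV} for $t\in(0,T]$, the prefactor $1+t^{-(n-q')/2q'}$ absorbing both the small-time blow-up of the first piece and the bounded positive power of $t$ coming from the second. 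For \eqref{estV1} I combine this with the trivial bound $\|K(t,\cdot)\ast\nabla\varphi_o\|_{q}\le\|\nabla\varphi_o\|_{q}$: when $t\ge1$ the weight $t^{-(n-q')/2q'}\le1$, so \eqref{estV} already gives the claim with $\sup_{0\le s\le T}\|f(s)\|_{q\theta/2}$; when $0<t\le1$ the whole forcing integral has $t-s$ small, so the second-piece argument applies on all of $[0,t]$ and produces a factor $t^{1-\mu}$ with $\mu<1$, hence bounded. Taking the supremum in $t$ gives \eqref{estV1}.

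\emph{Main obstacle.} The genuinely delicate point is not any single estimate but the exponent bookkeeping: recognizing $a=nq/(n+q)$ as the correct Young partner, checking that the two hypotheses on $(\theta,q)$ are exactly the conditions $1<\tfrac{nq}{n+q}<\tfrac{q\theta}{2}$ and $\mu<1$, and tracking the powers of $t$ and of the Lebesgue norms of $f$ precisely enough that they match the stated formulas — in particular pinning the exponent of $\|f\|_{q\theta/2}$ to $\tfrac\theta n\tfrac{q(n-1)-n}{\theta q-2}$. A secondary technical point is keeping the constant independent of $T$, which forces the distinction between small and large $t$ and relies once more on $q>n$ (equivalently $q'<n$) to make $\sigma^{-\frac n2(1-\frac1q)-\frac12}$ integrable at infinity.
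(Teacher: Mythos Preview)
Your overall strategy---Duhamel representation, Young's convolution inequality with the scaling of $\|\nabla K(\sigma,\cdot)\|_r$, and a time splitting into a ``far'' piece (use $f\in L^1$) and a ``near'' piece (use higher integrability of $f$)---is exactly the paper's. The difference lies in how the near-singularity piece is handled.

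The paper splits the time integral at a \emph{free} parameter $\varepsilon\in(0,t/2]$, applies Young with $f$ in $L^{q\theta/2}$ \emph{directly} on the near piece (no interpolation of $\|f\|_a$), obtaining
\[
\|I_2^1\|_q\lesssim \varepsilon^{-(n-q')/2q'}\sup_{[0,t]}\|f\|_1,\qquad
\|I_2^2\|_q\lesssim \varepsilon^{(\sigma'-n)/2\sigma'}\sup_{[t-\varepsilon,t]}\|f\|_{q\theta/2},
\]
with $\sigma'=q\theta/(2-\theta)>n$, and then \emph{optimizes} $\varepsilon$ so as to equalize the two powers. It is precisely this optimization that produces the exact exponent $\tfrac{\theta}{n}\tfrac{q(n-1)-n}{\theta q-2}=\tfrac{\sigma'(n-q')}{n(\sigma'-q')}$ on $\|f\|_{q\theta/2}$.

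Your version fixes the split at $t/2$ and instead interpolates $\|f\|_a$ between $L^1$ and $L^{q\theta/2}$ on the near piece. As you correctly observe, the borderline choice $a=nq/(n+q)$ makes the time integral only logarithmically divergent, so you are forced to take $a$ strictly larger, and this shifts the interpolation exponent to $\lambda^+$ rather than the exact $\lambda$. You flag this yourself (``the exponent is really to be read in the $(\cdot)^+$ sense''), but the lemma as stated claims the sharp exponent, so strictly speaking your argument proves a slightly weaker estimate. For every application in the paper this loss is harmless, but to match the statement exactly you should replace the fixed split plus interpolation by the paper's variable-$\varepsilon$ optimization; the two hypotheses on $(q,\theta)$ then enter exactly as $q'<n$ (so the far integral converges) and $\sigma'>n$ (so the near integral converges), with no $(\cdot)^+$ needed.
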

\proof
Using the explicit solution of the heat equation \eqref{HEf}, we obtain
\begin{align*}
\nabla \varphi(t,x)&=\int_{\mathbb{R}^{n}}\nabla_{x}K(t,x-y)\varphi_o(y) \ud y  \\[.3cm]
&\hspace{-.5cm} + \int^{t}_{0}\int_{\mathbb{R}^{n}}\nabla_{x}K(t-s,x-y) f(s,y)\,\ud y\ud s =: I_{1}(t,x) + I_{2}(t,x)\,.
\end{align*}
Estimate of the term  $I_{1}$  simply follows from  integration by parts and Young's inequality for convolutions
\begin{align}\label{estv1}
\begin{split}
\big\| I_{1}(t,\cdot) \big\|_{q} &=\Big\| \int_{\mathbb{R}^{n}}\nabla_{x}K(t,x-y) \varphi_o(y)\ud y \Big\|_{q}\\[.3cm]
&\hspace{-.5cm} = \Big\| \int_{\mathbb{R}^{n}}K(t,x-y)\nabla\varphi_o(y)\ud y\Big\|_{q}\leq \|K(t,\cdot)\|_{1}\|\nabla \varphi_o\|_{q} = \|\nabla \varphi_o\|_{q}.
\end{split}
\end{align}
Let us focus on the term $I_{2}$.  Fix $0<\varepsilon\leq t$ and consider the decomposition
$$
I_{2}(t,x) = \int^{t-\varepsilon}_{0}\int_{\mathbb{R}^{n}} ...\ud y\ud s+ \int^{t}_{t-\varepsilon}\int_{\mathbb{R}^{n}}...\ud y \ud s
=:I^{1}_{2}(t,x) + I^{2}_{2}(t,x)\,.
$$
In what follows, we will repeatedly make use of Young's  inequalities for convolution \cite[Th. IV.30]{Brez}
\begin{equation}\label{Yo}
\|f\star g\|_q\leq \|f\|_p\|g\|_r,
\qquad 1/p+1/r=1+1/q.
\end{equation}
For $I^{1}_{2}(t,x)$, it  yields
\begin{equation}\label{estv3}
\begin{array}{l}
\displaystyle\big\| I^{1}_{2}(t,\cdot) \big\|_{q}
\\[.3cm]
 = \Big\|-\frac{1}{2(4\pi)^{n/2}}\int^{t-\varepsilon}_{0}\frac{1}{(t-s)^{(n+1)/2}}\int_{\mathbb{R}^{n}}e^{-\frac{|x-y|^{2}}{4(t-s)}} \frac{(x-y)}{\sqrt{(t-s)}}\,f(s,y)\ud y\ud s\Big\|_{q}\\[.3cm]
\leq\displaystyle \frac{1}{2(4\pi)^{n/2}}\int^{t-\varepsilon}_{0}\frac{1}{(t-s)^{(n+1)/2}}\Big\|\int_{\mathbb{R}^{n}}e^{-\frac{|x-y|^{2}}{4(t-s)}} \frac{(x-y)}{\sqrt{(t-s)}}\,f(s,y)\,\ud y\Big\|_{q}\ud s\\[.3cm]
\leq\displaystyle \frac{1}{2(4\pi)^{n/2}}\int^{t-\varepsilon}_{0}\frac{1}{(t-s)^{(n+1)/2}}
\left(
\displaystyle\int_{\mathbb R^n}\Big|\displaystyle\frac{x}{\sqrt{t-s}}e^{-\frac{x^2}{4(t-s)}}
\Big|^q
\ud x
\right)^{1/q}
\,\big\|f(s,\cdot)\big\|_{1}\,\ud s\\[.3cm]
\leq \displaystyle\frac{\big\|e^{-|x|^2}x\big\|_{q}} { 4^{n/2q'}\pi^{n/2}}\frac{2q'}{n-q'}\frac{\sup_{0\leq s \leq t}\|f(s,\cdot)\|_{1}}{ \varepsilon^{(n-q')/2q'} }\,.
\end{array}\end{equation}
Note that the case $q=\infty$ is valid.  Next, we get (using \eqref{Yo} with $p=q\theta/2$ and $r=\sigma$)
\begin{equation}\label{estv4}\begin{array}{l}
\displaystyle\big\| I^{2}_{2}(t,\cdot) \big\|_{q} 
\\[.3cm]
\displaystyle = \Big\|-\frac{1}{2(4\pi)^{n/2}}\int^{t}_{t-\varepsilon}\frac{1}{(t-s)^{(n+1)/2}}\int_{\mathbb{R}^{n}}e^{-\frac{|x-y|^{2}}{4(t-s)}} \frac{(x-y)}{\sqrt{(t-s)}} f(s,y)\,\ud y\ud s\Big\|_{q}\\[.3cm]
\displaystyle\leq\frac{1}{2(4\pi)^{n/2}}\int^{t}_{t-\varepsilon}\frac{1}{(t-s)^{(n+1)/2}}\Big\|\int_{\mathbb{R}^{n}}e^{-\frac{|x-y|^{2}}{4(t-s)}} \frac{(x-y)}{\sqrt{(t-s)}} f(s,y)\,\ud y\Big\|_{q}\ud s\\[.3cm]
\displaystyle\leq \frac{1}{2(4\pi)^{n/2}}\int^{t}_{t-\varepsilon}\frac{1}{(t-s)^{(n+1)/2}}
\left(\displaystyle\int_{\mathbb R^n}\Big|\displaystyle\frac{x}{\sqrt{t-s}}e^{-\frac{x^2}{4(t-s)}}\Big|^\sigma\ud x\right)^{1/\sigma}
\,\big\|f(s,\cdot)\big\|_{q\theta/2} \ud s\\[.3cm]
\displaystyle\leq \frac{\big\|e^{-|x|^{2}}x\big\|_{\sigma'}} {4^{n/2\sigma'}\pi^{n/2}}\frac{2\sigma'}{\sigma'-n}\,\sup_{t-\varepsilon\leq s \leq t}\|f(s,\cdot)\|_{q\theta/2}\,\varepsilon^{(\sigma'-n)/2\sigma'}\,,
\end{array}
\end{equation}
where $\sigma'=\frac{q\theta}{2-\theta}>n$. (Note that the case $\theta=2$, that is $\sigma'=\infty$, is a valid choice.)  Thus, gathering \eqref{estv1}, \eqref{estv3} and \eqref{estv4} we are led to 
\begin{equation}\label{estv5}
\begin{array}{l}
\big\|\nabla\varphi(t,\cdot)\big\|_{q} 
= \big\|I_{1}(t,\cdot) + I_{2}(t,\cdot)\big\|_{q}\\[.3cm]
\qquad\displaystyle\leq \big\|\nabla\varphi_o\big\|_{q} + C_{n}\Bigg(\frac{\sup_{0\leq s \leq t}\|f(s,\cdot)\|_{1}}{\varepsilon^{(n-q')/2q'}} + \varepsilon^{(\sigma'-n)/2\sigma'}\sup_{ t-\varepsilon \leq s \leq t}\|f(s,\cdot)\|_{q\theta/2}\Bigg),
\end{array}
\end{equation}
with $C_n$ depending on $n, q,\theta$.  We shall choose $\varepsilon\in(0,\tfrac{t}{2}]$ defined by
\begin{equation*}
\varepsilon  =  \min\big\{1,\tfrac{t}{2}\big\} \Bigg(1 + \sup_{ \tfrac{t}{2} \leq s \leq t}\|f(s,\cdot)\|_{q\theta/2}\Bigg)^{-\frac{2}{n}\frac{\sigma'q'}{\sigma'-q'}}
\end{equation*}
to equalize the homogeneity.  Consequently, we obtain the estimate 
\begin{align*}
\|\nabla\varphi(t,\cdot)\|_{q} &\leq 
\|\nabla\varphi_o\|_{q} \\
&\hspace{-1cm}+C\Big(1+\frac{1}{t^{(n-q')/2q'}}\Big)\Big(1+\sup_{0\leq s \leq t}\|f(s,\cdot)\|_{1}\Big)\sup_{\tfrac{t}{2}\leq s \leq t}\|f(s,\cdot)\|^{\frac{\sigma'(n-q')}{n(\sigma'-q')}}_{q\theta/2}\,.
\end{align*}
where the constant $C>0$ depends only on the dimension $n$, on the exponent $q$ and $\theta$.  Estimate \eqref{estV} follows by noticing that
\begin{equation*}
\frac{\sigma'(n-q')}{n(\sigma'-q')} 
=\frac{q\theta}{2-\theta}\ \frac1n\ \frac{n-q/(q-1)}{q\theta/(2-\theta)-q/(q-1)}
=\frac{\theta}{n}\frac{q(n-1)-n}{\theta q -2}\,.
\end{equation*}
Now, for estimate \eqref{estV1} fix $T\geq1$ and gather \eqref{estv1}, \eqref{estv3} and \eqref{estv4} to obtain 
\begin{equation*}
\begin{array}{l}
\displaystyle\sup_{\varepsilon \leq t \leq T}\big\|\nabla\varphi(t,\cdot)\big\|_{q} 
= \displaystyle\sup_{\varepsilon \leq t \leq T}\big\|I_{1}(t,\cdot) + I_{2}(t,\cdot)\big\|_{q}\\[.3cm]
\displaystyle\leq \big\|\nabla\varphi_o\big\|_{q} + C_{n}\Bigg(\frac{\sup_{0\leq s \leq T}\|f(s,\cdot)\|_{1}}{\varepsilon^{(n-q')/2q'}} + \varepsilon^{(\sigma'-n)/2\sigma'}\sup_{ 0 \leq s \leq T}\|f(s,\cdot)\|_{q\theta/2}\Bigg),
\end{array}
\end{equation*}
with $C_n$ depending on $n, q,\theta$.  Note that $\varepsilon\in(0,T]$.  Similarly, when $t\in[0,\varepsilon]$ it follows that
\begin{align}\label{estv6}
\begin{split}
\sup_{0\leq t\leq \varepsilon}\|\nabla\varphi(t,\cdot)\|_{q} &= \sup_{0\leq t\leq \varepsilon }\|I_{1}(t,\cdot) + I_{2}(t,\cdot)\|_{q}\\[.3cm]
&\hspace{-1cm} \leq \|\nabla\varphi_o\|_{q} + C_{n}\,\varepsilon^{(\sigma'-n)/2\sigma'}\, \sup_{ 0 \leq s \leq T}\|f(s,\cdot)\|_{q\theta/2}\,.
\end{split}
\end{align}
Estimates \eqref{estv5} and \eqref{estv6} lead to
\begin{equation}\label{estv7}
\begin{array}{l}
\displaystyle\sup_{0\leq s\leq T}\|\nabla\varphi(s,\cdot)\|_{q} 
\\[.3cm]
\displaystyle\leq 2\|\nabla\varphi_o\|_{q} 
+ C_{n}\Bigg(\frac{\sup_{0\leq s \leq T}\|f(s,\cdot)\|_{1}}{\varepsilon^{(n-q')/2q'}} + \varepsilon^{(\sigma'-n)/2s'} \sup_{ 0 \leq s \leq T}\|f(s,\cdot)\|_{q\theta/2}\Bigg)\,.
\end{array}
\end{equation}
Again, we shall choose $\varepsilon\in(0,1]\subset(0,T]$ defined by
\begin{equation*}
\varepsilon  =  \Bigg(1 + \sup_{ 0 \leq s \leq T}\|f(s,\cdot)\|_{q\theta/2}\Bigg)^{-\frac{2}{n}\frac{\sigma'q'}{\sigma'-q'}}\end{equation*}
to equalize the homogeneity.  Consequently, we obtain the estimate 
$$\begin{array}{l}
\sup_{0\leq s\leq T}\|\nabla\varphi(s,\cdot)\|_{q}
\\[.3cm] \leq 
2\|\nabla\varphi_o\|_{q} 
+C\Big(1+\sup_{0\leq s \leq T}\|f(s,\cdot)\|_{1}\Big)\sup_{0\leq s \leq T}\|f(s,\cdot)\|^{\frac{\sigma'(n-q')}{n(\sigma'-q')}}_{q\theta/2}\,.
\end{array}$$
where the constant $C>0$ depends only on the dimension $n$, on the exponent $q$ and $\theta$.  This proves \eqref{estV1}.
\endproof
\begin{remark}
The same result applies to the damped equation
\begin{align}\label{HEd}
\begin{split}
\partial_{t}\varphi - \Delta\varphi = f -\delta \varphi\quad &\text{in $(0,\infty)\times \mathbb{R}^{n}$},
\\[.3cm]
\varphi\Big|_{t=0} = \varphi_o \quad &\text{on $\mathbb{R}^{n}$}.
\end{split}
\end{align}
Indeed, it suffices to apply  Lemma~\ref{L1Linf} to $e^{-\delta t}\varphi(t,x)$.
\end{remark}
\begin{lemma}\label{L2Linf}
Fix
\begin{equation*}
\begin{array}{c}
\varphi_o\in L^{\infty}(\mathbb{R}^{n})\,,\quad \nabla v\in L^{\infty}\big((0,T)\times\mathbb R^n\big)\,,\\ [5pt]
f\in L^{\infty}\big(0,T; L^{1} (\mathbb R^n)\big)\cap L^{\infty}\big((0,T)\times\mathbb R^n\big)\,.
\end{array}
\end{equation*} 
Let $\varphi$ be a nonnegative solution of the problem
\begin{align}\label{HEAdv}
\begin{split}
\partial_{t}\varphi - \Delta\varphi + \nabla\cdot(\varphi \nabla v) = f \;\; &\text{in}\;\;\mathbb{R}^{n}\times(0,\infty)\\[.3cm]
\varphi = \varphi_o \;\; &\text{on}\;\;\mathbb{R}^{n}\times\{t=0\}\,.
\end{split}
\end{align}
Then, we have for any $T\geq1$
\begin{align}\label{estW}
&\sup_{0\leq t\leq T}\|\varphi(t)\|_{\infty} \leq 2\|\varphi_{o}\|_{\infty} \\[.3cm] 
{}+C^1_{n}&\Big(\sup_{0\leq s \leq T}\|f_{+}(s)\|_{1}+ \sup_{0\leq s \leq T}\|\varphi(s)\|_{1}\Big)\|\nabla v\|^{n}_{\infty}  + C^2_{n}\sup_{0\leq s \leq T}\|f_{+}(s)\|_{\infty}^{\frac{n-1}{n+1}}\,.\nonumber
\end{align}
The constant $C^1_{n}$ is proportional to $\ln(T+1)$ in dimension $n=2$ and independent of time for $n\geq3$.
\end{lemma}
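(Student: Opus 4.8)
The plan is to argue directly on the Duhamel formula \eqref{HEf}, transferring the divergence in the drift term onto the heat kernel, exactly as in the proof of Lemma~\ref{L1Linf}, but now estimating $\varphi$ itself in $L^\infty$ instead of $\nabla\varphi$ in $L^q$. Writing \eqref{HEAdv} as $\partial_t\varphi-\Delta\varphi = f-\nabla\cdot(\varphi\nabla v)$ and using \eqref{HEf} together with an integration by parts in the last term, one gets
\[
\varphi(t,x) = \int_{\mathbb{R}^n} K(t,x-y)\varphi_o(y)\ud y
+ \int_0^t\!\!\int_{\mathbb{R}^n} K(t-s,x-y) f(s,y)\ud y\ud s
+ \int_0^t\!\!\int_{\mathbb{R}^n}\nabla_x K(t-s,x-y)\cdot(\varphi\nabla v)(s,y)\ud y\ud s.
\]
Since $\varphi\ge 0$ and the heat semigroup preserves positivity, replacing $f$ by $f_+$ in the second integral only increases the right-hand side pointwise; hence it suffices to bound from above the $L^\infty_x$ norms of the three pieces, say $I_1,I_2,I_3$ (with $f_+$ in $I_2$). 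The term $I_1$ is controlled at once by $\|K(t,\cdot)\|_1\|\varphi_o\|_\infty=\|\varphi_o\|_\infty$.

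For $I_2$ and $I_3$ I would fix a splitting parameter $\varepsilon\in(0,t]$ and cut each time integral into $\int_0^{t-\varepsilon}$ (far-diagonal) and $\int_{t-\varepsilon}^t$ (near-diagonal), then apply Young's convolution inequality \eqref{Yo} with the elementary bounds $\|K(\tau,\cdot)\|_1=1$, $\|K(\tau,\cdot)\|_\infty=C_n\tau^{-n/2}$, $\|\nabla K(\tau,\cdot)\|_1=C_n\tau^{-1/2}$, $\|\nabla K(\tau,\cdot)\|_\infty=C_n\tau^{-(n+1)/2}$. For $I_2$ this produces a near-diagonal contribution $\le\varepsilon\sup_{[0,t]}\|f_+\|_\infty$ and a far-diagonal contribution $\le C_n\big(\int_\varepsilon^t\tau^{-n/2}\ud\tau\big)\sup_{[0,t]}\|f_+\|_1$, the integral being $\ln(t/\varepsilon)$ when $n=2$ (the sole source of the logarithm) and $\le C_n\varepsilon^{-(n-2)/2}$ when $n\ge3$. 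For $I_3$, bounding $|\varphi\nabla v|\le\|\nabla v\|_\infty\varphi$, the near-diagonal part is $\le C_n\|\nabla v\|_\infty\sqrt\varepsilon\,\sup_{[0,T]}\|\varphi\|_\infty$ (using $\|\nabla K(\tau)\|_1$ with $\|\varphi\|_\infty$) and the far-diagonal part is $\le C_n\|\nabla v\|_\infty\varepsilon^{-(n-1)/2}\sup_{[0,t]}\|\varphi\|_1$ (using $\|\nabla K(\tau)\|_\infty$ with $\|\varphi\|_1$, the time integral $\int_\varepsilon^t\tau^{-(n+1)/2}\ud\tau$ being finite and $\le C_n\varepsilon^{-(n-1)/2}$ for all $n\ge2$).

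Collecting these, with $A:=\sup_{0\le t\le T}\|\varphi(t)\|_\infty$, one reaches an inequality of the form
\[
\|\varphi(t)\|_\infty \le \|\varphi_o\|_\infty + \varepsilon\sup\|f_+\|_\infty + C_n\, g_n(\varepsilon)\sup\|f_+\|_1 + C_n\|\nabla v\|_\infty\varepsilon^{-(n-1)/2}\sup\|\varphi\|_1 + C_n\|\nabla v\|_\infty\sqrt\varepsilon\,A,
\]
with $g_n(\varepsilon)=\ln(t/\varepsilon)$ for $n=2$ and $g_n(\varepsilon)=\varepsilon^{-(n-2)/2}$ for $n\ge3$. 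I would first require $C_n\|\nabla v\|_\infty\sqrt\varepsilon\le\tfrac12$, i.e. $\varepsilon\lesssim\|\nabla v\|_\infty^{-2}$; taking the supremum over $t\in[0,T]$ then absorbs the last term and leaves $A$ bounded by twice the remaining quantities, which is where the factor $2\|\varphi_o\|_\infty$ in \eqref{estW} comes from (the small-time regime $t<\varepsilon$ is treated without splitting, exactly as for \eqref{estv6}). Under that constraint the factor $\|\nabla v\|_\infty\varepsilon^{-(n-1)/2}$ becomes $\|\nabla v\|_\infty^{n}$, producing the term $C^1_n\|\nabla v\|_\infty^{n}\big(\sup\|\varphi\|_1+\sup\|f_+\|_1\big)$, with the $n=2$ logarithm absorbed into $C^1_n$ (whence its $\ln(T+1)$ behaviour) and an essentially time-independent constant for $n\ge3$. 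Finally, using the remaining freedom in $\varepsilon$ to balance $\varepsilon\sup\|f_+\|_\infty$ against the $\varepsilon^{-(n-1)/2}$ term yields the exponent $(n-1)/(n+1)$, i.e. the contribution $C^2_n\sup\|f_+\|_\infty^{(n-1)/(n+1)}$.

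I expect the main obstacle to be precisely this last optimization: one must pick a single $\varepsilon=\varepsilon(t)$ that is at once small enough to absorb $\sup_{[0,T]}\|\varphi\|_\infty$ arising from the near-diagonal drift piece and tuned to balance the far-diagonal drift against the near-diagonal source, so that the powers $\|\nabla v\|_\infty^{n}$ and $\sup\|f_+\|_\infty^{(n-1)/(n+1)}$ come out exactly as in \eqref{estW}, all while keeping the $t\to0$ behaviour and the dimension-two logarithm under control. Everything else is a routine application of Young's inequality for convolutions and the explicit Gaussian bounds; as in the rest of Section~\ref{4}, these manipulations are justified rigorously on the smooth approximating solutions, for which moreover $\sup_{[0,T]}\|\varphi\|_1\le\|\varphi_o\|_1+T\sup\|f_+\|_1$ holds by integrating the equation when $\varphi_o\in L^1$.
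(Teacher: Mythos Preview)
Your proposal is correct and follows essentially the same approach as the paper: Duhamel representation with the divergence moved onto $\nabla K$, the same near/far splitting at $t-\varepsilon$ for $I_2$ and $I_3$ with the same Young-inequality pairings, the separate (unsplit) treatment of $t\in[0,\varepsilon]$, and absorption of the $\sqrt{\varepsilon}\,\|\nabla v\|_\infty\,A$ term. The paper resolves the final ``single-$\varepsilon$'' issue you flag by taking $\varepsilon=\min\{1,\delta^{2},T\}\big(1+\sup_{[0,T]}\|f_+\|_\infty\big)^{-2/(n+1)}$ with $\delta^{-1}:=2C_n\|\nabla v\|_\infty$, so the absorption constraint and the balancing that yields the exponent $(n-1)/(n+1)$ are imposed simultaneously rather than sequentially.
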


\proof
Fix $T \geq \varepsilon > 0$ and consider $t \in [\varepsilon,T]$.  We shall use the Duhamel formula for the heat equation 
 \eqref{HEf} with the source
term 
\begin{equation*}
\tilde{f} = f - \nabla\cdot(\varphi\,\nabla v).
\end{equation*}
We split the corresponding expression as $\varphi=\sum^{3}_{j=1} I_{j}$:  
let us concur here that $I_{1}$ is the term corresponding to the initial data, $I_{2}$ is the term corresponding to the source  $f$, and
 $I_{3}$ corresponds to the convection term $-\nabla\cdot(\varphi\,\nabla v)$.  For $I_{1}$, we immediately obtain
\begin{equation}\label{estI1}
0\leq I_{1}(t,x)=\int_{\mathbb{R}^{n}}K(t,x-y)\varphi_o(y)\ud y\leq \|\varphi_o\|_{\infty}\,.
\end{equation}
For $I_{2}$, we split the time integral as follows
\begin{equation*}
I_{2}(t,x) =\int^{t}_{0}\int_{\mathbb{R}^{n}}K(t-s,x-y)f(s,y)\ud y\ud s
=\underbrace{
 \int^{t-\varepsilon}_{0}\int_{\mathbb{R}^{n}}...\ud y\ud s}_{:=I^{1}_{2}} + 
 \underbrace{
 \int^{t}_{t-\varepsilon}\int_{\mathbb{R}^{n}}...\ud y\ud s}_{:=I^{2}_{2}}.
\end{equation*}
A direct computation
 shows that
\begin{align*}
|I^{1}_{2} | \leq C_{n}
\Bigg(
\begin{array}{cc}
\ln(t/\varepsilon) & \text{if}\;n=2\\[.3cm]
\varepsilon^{-\frac{n-2}{2}} & \text{if}\;n\geq 3
\end{array}
\Bigg)
\times\sup_{0\leq s\leq t-\varepsilon}\|f_{+}(s)\|_{1}
=: C_{n}\Phi(\varepsilon,t)\sup_{0\leq s\leq t-\varepsilon}\|f(s)\|_{1},
\end{align*}
while
\begin{align*}
|I^{2}_{2} | &\leq \int^{t}_{t-\varepsilon}\Bigg(\int_{\mathbb{R}^{n}}K(t-s,x-y)\ud y\Bigg)\ud s \sup_{t-\varepsilon\leq s \leq t}\|f_{+}(s)\|_{\infty}\\[.3cm]
&\leq
\int^{t}_{t-\varepsilon}1\ud s  \sup_{t-\varepsilon\leq s \leq t}\|f(s)\|_{\infty}=\varepsilon\sup_{t-\varepsilon\leq s \leq t}\|f_{+}(s)\|_{\infty}\,.
\end{align*}
As a consequence, we get
\begin{equation}\label{estI2}
I_{2}(t,x)\leq C_{n}\Bigg(\Phi(\varepsilon,t) \sup_{0\leq s \leq T}\|f_{+}(s)\|_{1} + \varepsilon\sup_{t - \varepsilon \leq s \leq T}\|f_{+}(s)\|_{\infty}\Bigg)\,.
\end{equation}
Additionally, integration by parts implies that
\begin{align*}
I_{3}(t,x) &= -\int^{t}_{0}\int_{\mathbb{R}^{n}}K(t-s,x-y)\nabla_{y}\cdot\big(\varphi(s,y) \nabla v(s,y)\big)\ud y\ud s\\[.3cm]
&= \int^{t}_{0}\int_{\mathbb{R}^{n}}\nabla_{y}K(t-s,x-y)\cdot\nabla v(s,y)\,\varphi(s,y)\ud y\ud s \\[.3cm]
&= \int^{t-\varepsilon}_{0}\int_{\mathbb{R}^{n}} ... \ud y\ud s+ \int^{t}_{t-\varepsilon}\int_{\mathbb{R}^{n}}...\ud y\ud s=:I^{1}_{3}+I^{2}_{3}\,.
\end{align*}
Similar  arguments lead to
\begin{align}\label{estI31}
\begin{split}
|I^{1}_{3}(t,x)| &= \left|\frac{1}{2(4\pi)^{n/2}}
\int^{t-\varepsilon}_{0}\frac{1}{(t-s)^{(n+1)/2}}\int_{\mathbb{R}^{n}}e^{-\frac{|x-y|^{2}}{4(t-s)}} \frac{(x-y)}{\sqrt{(t-s)}}\cdot\nabla v\,\varphi\,\ud y\ud s \right|\\[.3cm]
& \leq\frac{e^{-1/2}\|\nabla v\|_{\infty}} {\sqrt{2}(4\pi)^{n/2}(n-1)}\frac{\sup_{0\leq s \leq T}\|\varphi(s)\|_{1}}{\varepsilon^{(n-1)/2}}\,,
\end{split}
\end{align}
and
\begin{align}\label{estI32}
\begin{split}
I^{2}_{3}(t,x) &\leq \left|\frac{1}{2(4\pi)^{n/2}}\int^{t}_{t-\varepsilon}\frac{1}{(t-s)^{(n+1)/2}}\int_{\mathbb{R}^{n}}e^{-\frac{|x-y|^{2}}{4(t-s)}} \frac{(x-y)}{\sqrt{(t-s)}}\cdot\nabla v\,\varphi\,\ud y\ud s\right|
 \\[.3cm]
&\leq \frac{2\|e^{-|x|^{2}}x\|_{1}\|\nabla v\|_{\infty}} {\pi^{n/2}}\,\sqrt{\varepsilon} \sup_{t-\varepsilon\leq s \leq T}\|\varphi(s)\|_{\infty}.
\end{split}
\end{align}
Gathering estimates \eqref{estI31} and \eqref{estI32} it follows that
\begin{equation}\label{estI3}
I_{3}(t,x)\leq C_{n}\|\nabla v\|_{\infty}\Bigg(\frac{\sup_{0\leq s \leq T}\|\varphi(s)\|_{1}}{\varepsilon^{(n-1)/2}} + \sqrt{\varepsilon} \sup_{t-\varepsilon\leq s \leq T}\|\varphi(s)\|_{\infty}\Bigg)\,.
\end{equation}
Thus, combining \eqref{estI1}, \eqref{estI2} and \eqref{estI3} we obtain
\begin{align}\label{estw1}
\begin{split}
\sup_{\varepsilon \leq t \leq T}\|\varphi(t)&\|_{\infty} = \sup_{\varepsilon \leq t \leq T}\|I_{1}(t) + I_{2}(t) + I_{3}(t)\|_{\infty}\\[.3cm]
&\hspace{-1.2cm}\leq \|\varphi_o\|_{\infty} + C_{n}\Bigg(\Phi(\varepsilon,T) \sup_{0\leq s \leq T}\|f_{+}(s)\|_{1} + \varepsilon\sup_{ 0 \leq s \leq T}\|f_{+}(s)\|_{\infty}\Bigg) \\[.3cm]
&+ C_{n}\|\nabla v\|_{\infty}\Bigg(\frac{\sup_{0\leq s \leq T}\|\varphi(s)\|_{1}}{\varepsilon^{(n-1)/2}} + \sqrt{\varepsilon} \sup_{ 0 \leq s \leq T}\|\varphi(s)\|_{\infty}\Bigg).
\end{split}
\end{align}
Now assume $t \in [0,\varepsilon]$.  A simpler, yet analog, procedure shows that
\begin{align}\label{estw2}
\begin{split}
\sup_{0\leq t\leq \varepsilon}\|& \varphi(t)\|_{\infty} = \sup_{0\leq t\leq \varepsilon }\|I_{1}(t) + I_{2}(t) + I_{3}(t)\|_{\infty}\\[.3cm]
& \leq \|\varphi_o\|_{\infty} + \varepsilon\,\sup_{ 0 \leq s \leq T}\|f_{+}(s)\|_{\infty} + \|\nabla v\|_{\infty}\,\sqrt{\varepsilon}\, \sup_{ 0 \leq s \leq T}\|\varphi(s)\|_{\infty}\,.
\end{split}
\end{align}
Noticing that
\begin{equation*}
\sup_{0\leq t\leq T}\|\varphi(t)\|_{\infty} \leq \sup_{0\leq t\leq \varepsilon}\|\varphi(t)\|_{\infty} + \sup_{\varepsilon\leq t \leq T}\|\varphi(t)\|_{\infty}\,,
\end{equation*}
we can add inequalities \eqref{estw1} and \eqref{estw2} to obtain
\begin{align}\label{estw3}
\sup_{0\leq t\leq T}&\|\varphi(t)\|_{\infty} \leq 2\|\varphi_o\|_{\infty} + C_{n}\Bigg(\Phi(\varepsilon,T)\sup_{0\leq s \leq T}\|f_{+}(s)\|_{1} + \varepsilon\sup_{ 0 \leq s \leq T}\|f_{+}(s)\|_{\infty}\Bigg)\nonumber\\[.3cm]
&\qquad\qquad+ C_{n}\|\nabla v\|_{\infty}\Bigg(\frac{\sup_{0\leq s \leq T}\|\varphi(s)\|_{1}}{\varepsilon^{(n-1)/2}} + \sqrt{\varepsilon} \sup_{ 0 \leq s \leq T}\|\varphi(s)\|_{\infty}\Bigg).
\end{align}
Let us choose in all dimensions
\begin{equation*}
\varepsilon  =  \min\big\{1,\,\delta^{2},\, T\big\} \Bigg(1 + \sup_{ 0 \leq s \leq T}\|f_{+}(s)\|_{\infty}\Bigg)^{-\frac{2}{n+1}}
\end{equation*}
with $1/\delta := 2C_{n}\|\nabla v\|_{\infty}$, to discover that
\begin{align*}
&\sup_{0\leq t\leq T}\|\varphi(t)\|_{\infty} \leq 2\|\varphi_{o}\|_{\infty} \\[.3cm] 
{}+C^1_{n}&\Big(\sup_{0\leq s \leq T}\|f_{+}(s)\|_{1}+ \sup_{0\leq s \leq T}\|\varphi(s)\|_{1}\Big)\|\nabla v\|^{n}_{\infty}  + C^2_{n}\sup_{0\leq s \leq T}\|f_{+}(s)\|_{\infty}^{\frac{n-1}{n+1}}\,.\nonumber
\end{align*}
The constant $C^{1}_{n}$ inherits the dependence of $T$ from $\Phi(\varepsilon,T)$.  Thus, for any $T\geq 1$, we get
\begin{equation*}
C^1_{n}=\Bigg\{
\begin{array}{cc}
\tilde {C}^1_{n}\big(\ln(T+1)\big) & \text{if}\;n=2\,, \\[.3cm]
\text{Independent of }\;T  & \text{if}\; n\geq 3.
\end{array}
\end{equation*}
This proves the lemma.
\endproof
\subsection{Analysis of the system \eqref{1000} in dimension $n=2$}\label{finalsubsect}
\subsubsection{From $L^{1}$ to $L^{\gamma}$ integrability}
\begin{proposition}\label{SPDInt}
Fix a time $T>0$ and let $(u,w)$ be a classical nonnegative solution of the SPD system \eqref{1000} in the interval $[0,T]$.  Then, for any $\gamma>1$ there exists an explicitly computable exponent $\beta:=\beta(\gamma)$ such that
\begin{align*}
\|u(t)\|_{\gamma} + \|w(t)\|_{\gamma}  \le C\big(m_o,\gamma\big)\,\Big(1+\frac{1}{t^{\beta}}\Big)\,,\quad t\in(0,T]\,,
\end{align*}
with constant $C$ depending additionally on\eqref{HypH}-\eqref{HSPD} and $\|\nabla p_o\|_{L^{2(\gamma+1)}}$ but  independent of $T$.  If additionally $(u_o, w_o)\in L^{\gamma}\times L^{\gamma^{+}}$ for some $\gamma>\sqrt{2}$, it follows that
\begin{align*}
\sup_{0\leq t \leq T}\Big(\|u(t)\|_{\gamma} + \|w(t)\|_{\gamma^{+}}\Big)  \le C\big(m_o,\gamma,\|u_o\|_{\gamma},\|w_o\|_{\gamma^{+}}\big)\,,
\end{align*}
where, as before, the constant $C$ depends additionally on\eqref{HypH}-\eqref{HSPD} and $\|\nabla p_o\|_{L^{2(\gamma+1)}}$ but it is independent of $T$.
\end{proposition}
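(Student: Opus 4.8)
The plan is to run the scheme of Proposition~\ref{T1000} almost verbatim, the one structural change being that the elliptic gradient bound $\|\nabla p\|_{2^+}\lesssim\|w\|_{2^+}$ available there must be replaced by the Duhamel estimate of Lemma~\ref{L1Linf} applied to the parabolic pheromone equation $\partial_t p-\Delta p=w-\delta p$ (that is, to $e^{-\delta t}p$, as in the remark following that lemma). Two preliminary observations: $\partial_t c=-uc\le0$ gives $0\le c(t,x)\le c_o(x)$, so every $\|c\|_\infty$ below is controlled by \eqref{HSPD}; and mass is still conserved, so $\|u(t)\|_1,\|w(t)\|_1\le m_o$.

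Multiplying the $u$- and $w$-equations by $u^{\gamma-1}$ and $w^{\alpha-1}$ (with $\gamma<\alpha<\gamma+1$, or simply $\alpha=\gamma$) one gets the usual $L^\gamma$/$L^\alpha$ energy identities; the reaction terms and the $\nabla v$-drift are treated exactly as in \eqref{3000}--\eqref{4000-2}, and the only genuinely new work concerns the chemotactic term. Writing $\nabla u^\gamma=2u^{\gamma/2}\nabla u^{\gamma/2}$ and applying H\"older with exponents $\bigl(2(\gamma+1),\ \tfrac{2(\gamma+1)}{\gamma},\ 2\bigr)$ gives $\int\nabla u^\gamma\cdot\nabla p\le C\|\nabla p\|_{2(\gamma+1)}\,\|u\|_{\gamma+1}^{\gamma/2}\,\|\nabla u^{\gamma/2}\|_2$; inserting the Gagliardo--Nirenberg--Sobolev inequality \eqref{4500} (which, thanks to $\|u\|_1\le m_o$, bounds $\|u\|_{\gamma+1}^{\gamma+1}$ by $C(m_o)\bigl(U+\|\nabla u^{\gamma/2}\|_2^2\bigr)$, with $U:=\int u^\gamma$) and then two Young inequalities with a free parameter lets one absorb every $\|\nabla u^{\gamma/2}\|_2^2$ contribution into the dissipation, at the cost of the pure forcing $C(m_o)\,\|\nabla p(t)\|_{2(\gamma+1)}^{2(\gamma+1)}$ on the right. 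One more appeal to \eqref{4500} turns the surviving dissipation into $C(m_o)U^{\gamma/(\gamma-1)}-C$ (and similarly for $w$), so that, after adding the inequalities and absorbing the lower-order terms, $Z:=U+\int w^\alpha$ satisfies $\frac{\ud}{\ud t}Z+c(m_o)\,Z^{\alpha/(\alpha-1)}\le C(m_o)\bigl(1+\|\nabla p(t)\|_{2(\gamma+1)}^{2(\gamma+1)}\bigr)$.

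It remains to estimate $\|\nabla p\|_{2(\gamma+1)}$. Lemma~\ref{L1Linf} with $q=2(\gamma+1)>n=2$ and $\theta\in(2/(\gamma+2),2]$ (the constraint $\theta q/(2-\theta)>n$ forcing $\theta>2/(\gamma+2)$) yields $\|\nabla p(t)\|_{2(\gamma+1)}\le\|\nabla p_o\|_{2(\gamma+1)}+C(1+t^{-\kappa})(1+m_o)\sup_{t/2\le s\le t}\|w(s)\|_{(\gamma+1)\theta}^{\mu}$ with explicit $\kappa,\mu$. For the first (decay) statement one does not need any restriction on $\gamma$: since the $w$-equation is, for fixed $u$, a linear parabolic equation with bounded drift $\nabla v$ and $L^1$-source $uc$, the smoothing of the heat semigroup gives $\|w(t)\|_r\le C\,(1+t^{-(1-1/r)})$ for every $r<\infty$ (with $C$ depending on $m_o$, \eqref{HypH}, \eqref{HSPD} and $\|w_o\|_1$, and $T$-independent after the time-shift trick of Proposition~\ref{P1}); feeding this into the above makes the forcing an explicit function of $t$, and the comparison principle of Corollary~\ref{A2} delivers $\|u(t)\|_\gamma+\|w(t)\|_\gamma\le C(m_o,\gamma)(1+t^{-\beta})$ for an explicitly computable $\beta=\beta(\gamma)$. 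For the second statement one works with data in $L^\gamma\times L^{\gamma^+}$, estimates $w$ at a level $\alpha=\gamma^+\in(\gamma,\gamma+1)$ directly from $\|w_o\|_{\gamma^+}$, controls $\|\nabla p\|_{2(\gamma+1)}$ time-uniformly through estimate \eqref{estV1} of Lemma~\ref{L1Linf} (which requires $\sup_s\|w(s)\|_{(\gamma+1)\theta}$, hence one needs $(\gamma+1)\theta\le\alpha$, i.e.\ $2(\gamma+1)/(\gamma+2)<\alpha$ --- this is the origin of the restriction $\gamma>\sqrt2$), and closes with Lemma~\ref{A1} in place of Corollary~\ref{A2} to turn $\sup_{t\ge0}Z(t)$ into a finite constant.

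The delicate point --- and what I expect to be the main obstacle --- is the exponent bookkeeping in the uniform-in-time case: once the chemotactic forcing $\|\nabla p\|_{2(\gamma+1)}^{2(\gamma+1)}$ is rewritten through $\sup_s\|w(s)\|_{(\gamma+1)\theta}$ and then, by interpolation between $L^1$ ($\le m_o$) and $L^\alpha$, through $\sup_s Z(s)$, it contributes a power $Z^{\gamma/(\alpha-1)}$ of $\sup Z$, which lies \emph{strictly} below the dissipative exponent $Z^{\alpha/(\alpha-1)}$ precisely because $\alpha>\gamma$; this is exactly why the sharp statement carries $\|w\|_{\gamma^+}$ rather than $\|w\|_\gamma$ and why $\gamma>\sqrt2$ cannot be dispensed with there. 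Making the value of $\beta(\gamma)$ explicit in the first part, and checking that the time-singularities $(1+t^{-\kappa})^{2(\gamma+1)}$ coming from Lemma~\ref{L1Linf} remain integrable enough for Corollary~\ref{A2} to apply, is the remaining routine bookkeeping.
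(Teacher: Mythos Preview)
Your outline is largely correct and the uniform-in-time half matches the paper almost exactly; the difference lies in how the first (decay) half is closed. You propose to \emph{decouple}: first extract $\|w(t)\|_r\lesssim 1+t^{-(1-1/r)}$ from the $w$-equation alone (``smoothing of the heat semigroup''), then feed this into the pheromone bound and solve an ODE for $\int u^\gamma$ with an explicit time-dependent forcing. The paper does \emph{not} decouple: it keeps the $w$-contribution live inside the forcing by choosing $\theta=\gamma/(\gamma+1)$ in Lemma~\ref{L1Linf} (so that $q\theta/2=\gamma$ and the output is exactly $\sup_{t/2\le s\le t}\|w(s)\|_\gamma^{\gamma'}$), obtains a single differential inequality for $Z=\int u^\gamma+\int w^\alpha$ with a $\sup_{t/2\le s\le t}Z^{(\gamma/\alpha)\alpha'}$ on the right, and applies Corollary~\ref{A2} in the genuinely nonlinear form. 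This coupled route is self-contained within the tools already developed; it also forces $\gamma>\sqrt2$ in the first part (the constraint $\theta q/(2-\theta)>2$ with $\theta=\gamma/(\gamma+1)$ is precisely $\gamma^2>2$), with the range $1<\gamma\le\sqrt2$ recovered afterwards by Lebesgue interpolation.

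Your decoupled route is legitimate, but the sentence ``the smoothing of the heat semigroup gives $\|w(t)\|_r\le C(1+t^{-(1-1/r)})$'' hides real work: the $w$-equation carries the drift $\nabla\cdot(w\nabla v)$, and a single pass of Duhamel from $L^1$ data and $L^1$ source only reaches $r<2$ in two dimensions (the convolution with $\nabla K$ contributes an extra $(t-s)^{-1/2}$, and the time integral diverges for $r\ge2$). To get all $r<\infty$ you would need either an Aronson-type Gaussian bound on the fundamental solution of the drift--diffusion operator, or an iteration of the Duhamel step; neither is developed in the paper. So while your approach buys you the absence of any auxiliary restriction on $\gamma$ in the first half, it does so at the cost of an external ingredient, whereas the paper's coupled argument closes with the lemmas already in hand.

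Two minor remarks. Your three-exponent H\"older on $\int\nabla u^\gamma\cdot\nabla p$ is equivalent to the paper's Cauchy--Schwarz followed by H\"older with $(\gamma+1,(\gamma+1)/\gamma)$; both land on the forcing $\|\nabla p\|_{2(\gamma+1)}^{2(\gamma+1)}$. And your derivation of the threshold $\gamma>\sqrt2$ from $(\gamma+1)\theta\le\alpha=\gamma^+$ combined with $\theta>2/(\gamma+2)$ is exactly the same constraint as the paper's, just read from the other side.
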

\proof 
For the population $u$, we obtain the following estimate
\begin{align*}
\frac{\ud}{\ud t}\int_{\mathbb{R}^{n}} & u^\gamma   \ud x + 4 \frac{\gamma-1}{\gamma}\int_{\mathbb{R}^{n}} |\nabla u^{\gamma/2}|^2  \ud x\\[.3cm]
& \quad \leq  (\gamma-1)\int_{\mathbb{R}^{n}} \nabla p \cdot \nabla (u^{\gamma})   \ud x + \gamma \int_{\mathbb{R}^{n}} N w u^{\gamma-1}  \ud x\,.
\end{align*}
With Cauchy-Schwarz and Young  inequalities, we get
\[\begin{array}{l}
\displaystyle(\gamma-1)\int_{\mathbb{R}^{n}} \nabla p \cdot \nabla (u^{\gamma})   \ud x 
= 2(\gamma-1)\int_{\mathbb{R}^{n}} \nabla p \cdot \nabla (u^{\gamma/2})u^{\gamma/2}   \ud x\\[.3cm]
 \displaystyle
 \qquad\leq \gamma(\gamma-1)\int_{\mathbb{R}^{n}}\big|\nabla p\big|^{2}u^{\gamma} \ud x + \frac{\gamma-1}{\gamma}\int_{\mathbb{R}^{n}}\big|\nabla (u^{\gamma/2})\big|^{2}\ud x\,,
\end{array}\]
so that
\begin{align*}
\frac{\ud}{\ud t} \int_{\mathbb{R}^{n}} u^\gamma   \ud x
+ & 3 \frac{\gamma-1}{\gamma}\int_{\mathbb{R}^{n}} |\nabla u^{\gamma/2}|^2  \ud x
\nonumber\\[.3cm]
&\leq  \gamma \int_{\mathbb{R}^{n}}  N w u^{\gamma-1}   \ud x +  \gamma(\gamma-1)\int_{\mathbb{R}^{n}}\big|\nabla p\big|^{2}u^{\gamma} \ud x.
\end{align*}
We make use of the H\"older inequality with conjugate exponents $\gamma$ and $\gamma'=\gamma/(\gamma-1)$ in the first integral of the right hand side
and with the pair $\gamma+1$, $(\gamma+1)/\gamma$ in the second.
Combined with the convexity inequality $ab\leq a^p/p+b^{p'}/p'$, we find, for any $\varepsilon>0$,
\begin{align*}
\frac{\ud}{\ud t} \int_{\mathbb{R}^{n}} u^\gamma   \ud x
+ & 3 \frac{\gamma-1}{\gamma}\int_{\mathbb{R}^{n}} |\nabla u^{\gamma/2}|^2  \ud x
\\[.3cm]
&\hspace{-1cm}\leq
\gamma \|N\|_{\infty} \int_{\mathbb{R}^{n}} w^{\gamma}  \ud x +  \gamma\,\|N\|_{\infty} \int_{\mathbb{R}^{n}} u^{\gamma}  \ud x  +  {}\nonumber\\[.3cm]
&\hspace{1.2cm}  \frac{\gamma}{\varepsilon^{\gamma+1}}\int_{\mathbb{R}^{n}}   \big|\nabla p\big|^{2(\gamma+1)}  \ud x + \gamma^{2}\varepsilon^{\frac{\gamma+1}{\gamma}}\int_{\mathbb{R}^{n}} u^{\gamma+1}  \ud x\,.\nonumber
\end{align*}
In order to control $\nabla p$ we use estimate \eqref{estV} in Lemma \ref{L1Linf} with $n=2$, $q=2(\gamma+1)$, $\theta=\frac{\gamma}{\gamma+1}$, and $f=P\,w$, to conclude that
\begin{equation*}
\begin{array}{l}\displaystyle
\int_{\mathbb{R}^{n}}\big|\nabla p(t,x)\big|^{2(\gamma+1)}\ud x
\\[.3cm]
\displaystyle
\leq \|\nabla p_o\|_{2(\gamma+1)}^{2(\gamma+1)} 
+C(m_o)\Big(1+ \frac{1}{t^{\gamma}}\Big)\sup_{\tfrac{t}{2}\leq s\leq t}\Big(\int_{\mathbb{R}^{n}}\big|w(s,x)\big|^{\gamma}\ud x\Big)^{\frac{\gamma}{\gamma-1}}
\end{array}\end{equation*}
holds for any $\gamma>\sqrt{2}$.  The constant $C(m_o)$ depends on \eqref{HypH} and \eqref{HSPD}.  This leads to
\begin{align}\label{SDEP1}
\begin{split}
\frac{\ud}{\ud t} &\int_{\mathbb{R}^{n}} u^\gamma(t,x)    \ud x
+ 3 \frac{\gamma-1}{\gamma}\int_{\mathbb{R}^{n}} |\nabla u^{\gamma/2}(t,x)|^2  \ud x
\\[.3cm]
& \leq \gamma\|N\|_{\infty} \int_{\mathbb{R}^{n}} w^{\gamma}(t,x)  \ud x + {}\\[.3cm]
 &\hspace{-.2cm}\gamma\,\|N\|_{\infty} \int_{\mathbb{R}^{n}} u^{\gamma}(t,x)  \ud x 
 + \frac{C(m_o)\,\gamma}{\varepsilon^{\gamma+1}}\Big(1+\frac{1}{t^{\gamma}}\Big)\sup_{\tfrac{t}{2}\leq s\leq t}
 \Big(\int_{\mathbb{R}^{n}}w^{\gamma}(s,x)\ud x\Big)^{\frac{\gamma}{\gamma-1}} \\[.3cm]
&\qquad+ \gamma^{2}\varepsilon^{\frac{\gamma+1}{\gamma}}\int_{\mathbb{R}^{n}} u^{\gamma+1}(t,x)  \ud x\,
+ \frac{\gamma}{\varepsilon^{\gamma+1}} \|\nabla p_o\|^{2(\gamma+1)}_{2(\gamma+1)}\,.
\end{split}
\end{align} 
We also have the following  estimate for $w$
\begin{align}\label{SDEP2}
\frac{\ud}{\ud t} \int_{\mathbb{R}^{n}} & w^\alpha(t,x)  \ud x 
+ 4 \frac{\alpha-1}{\alpha}\int_{\mathbb{R}^{n}} |\nabla w^{\alpha/2}(t,x)|^2  \ud x\\[.3cm]
&\leq \|c\|_{\infty} \int_{\mathbb{R}^{n}} u^{\alpha}(t,x)  \ud x 
+ \alpha\big(\|c\|_{\infty}+\|\Delta v\|_{\infty}\big) \int_{\mathbb{R}^{n}} w^{\alpha}(t,x)  \ud x\,.\nonumber
\end{align}
In other to control the right side with the left side we are forced to select $\gamma<\alpha<\gamma+1$.  
We shall also use Gagliardo-Nirenberg-Sobolev's inequality:  in the whole space $\mathbb R^2$, inequality \eqref{4500} reduces to $\int \xi^{\alpha+1}\ud x\leq C\int \xi\ud x\int \nabla(\xi^{\alpha/2})\ud x$.  Thus, adding inequalities \eqref{SDEP1} and \eqref{SDEP2} we obtain, after similar computations to those of the FPD system, the following estimate which holds for any $t\in(0,T]$
\begin{align*}
\frac{\ud}{\ud t}Z(t) + C(m_o)& Z^{\alpha'}(t)\leq C\big(1+\|\nabla p_o\|^{2(\gamma+1)}_{2(\gamma+1)} \big)+ C(m_o)\Big(1+\frac{1}{t^{\gamma}}\Big)\sup_{\tfrac{t}{2}\leq s \leq t}\|w\|^{\gamma\gamma'}_{\gamma}\\
&\hspace{-1cm}\leq C\big(1+\|\nabla p_o\|^{2(\gamma+1)}_{2(\gamma+1)} \big)+ C(m_o)\Big(1+\frac{1}{t^{\gamma}}\Big)\sup_{\tfrac{t}{2}\leq s \leq t}\|w\|^{\gamma\alpha'}_{\alpha} \\
&\leq C\big(1+\|\nabla p_o\|^{2(\gamma+1)}_{2(\gamma+1)} \big)+ C(m_o)\Big(1+\frac{1}{t^{\gamma}}\Big)\sup_{\tfrac{t}{2}\leq s \leq t}Z^{\frac{\gamma}{\alpha}\alpha'}(t)\,.
\end{align*}
where
\begin{equation*}
Z(t):=\int_{\mathbb{R}^{n}}u^{\gamma}(t)\ud x + \int_{\mathbb{R}^{n}}w^{\alpha}(t)\ud x\,.
\end{equation*}
Now apply the comparison Lemma \ref{A2} to obtain for any $\gamma>\sqrt{2}$
\begin{equation}\label{SDEGenL1}
Z(t)\leq C(m_o,\gamma)\Big(1+\frac{1}{t^{\frac{\alpha-1}{\alpha-\gamma}\gamma}}\Big)\,,\qquad 0<t\leq T\,.
\end{equation}
The constant $C$ depends additionally on\eqref{HypH}-\eqref{HSPD} and $\|\nabla p_o\|_{L^{2(\gamma+1)}}$ but it is independent of $T>0$.  The case $\gamma\in(1,\sqrt{2}]$ follows by Lebesgue's interpolation between estimate \eqref{SDEGenL1} and the mass conservation.

Finally, uniform propagation of the $L^{\gamma}$ and $L^{\gamma^{+}}$ norms of $u$ and $w$ follows using previous computations with the estimate \eqref{estV1} instead of estimate \eqref{estV} which give us the bound
\begin{equation*}
\frac{\ud}{\ud t}Z(t) + C(m_o) Z^{\alpha'}(t)\leq C\big(1+\|\nabla p_o\|^{2(\gamma+1)}_{2(\gamma+1)} \big)+ C(m_o)\sup_{0\leq s \leq T}Z^{\frac{\gamma}{\alpha}\alpha'}(s)\,,
\end{equation*}
valid for any $t\in[0,T]$ with $T\geq1$ and $\sqrt{2}<\gamma<\alpha<\gamma+1$.  The comparison Lemma \ref{A1} gives
\begin{align*}
&\sup_{0\leq s \leq T}Z(s)\leq\\
&\hspace{.5cm}\max\Big\{Z(0), \Big(C\big(1+\|\nabla p_o\|^{2(\gamma+1)}_{2(\gamma+1)} \big)+ C(m_o)\sup_{0\leq s \leq T}Z^{\frac{\gamma}{\alpha}\alpha'}(s)\Big)^{\frac{1}{\alpha'}}\Big\}\,,
\end{align*}
which implies, since $\tfrac{\gamma}{\alpha}<1$, that $\sup_{0\leq s \leq T}Z(s)$ is finite and uniform in $T$. 
\endproof
\subsubsection{From $L^{\gamma}$ to $L^{\infty}$ integrability}
\begin{proposition}\label{SPDP1}
Let  $T>0$. Consider initial data $\big(u_o, w_o, \nabla p_o\big)\in L^{3} \times L^{3^{+}}\times L^{8}$ and, let $(u,w)$ be a classical nonnegative solution of the  system \eqref{1000} in the interval $[0,T]$.  Then, the following $L^{\infty}$-estimate holds
\begin{align}\label{SPDBound}
\|u(t)\|_{\infty}+ \|w(t)\|_{\infty} &\leq  C(m_o)\Big(1+ \frac{1}{\sqrt{t}} \Big)\,, \quad t\in(0,T]\,,
\end{align}
where the constant $C$ depends on \eqref{HypH}, \eqref{HSPD} and the initial data, but, it is independent of $T>0$.  In particular, for any $\gamma\in[1,\infty]$ it follows that
\begin{align}\label{SPDinterpolation}
\|u(t)\|_{\gamma}+ \|w(t)\|_{\gamma} &\leq  C(m_o)\Big(1+ \frac{1}{t^{\frac{1}{2\gamma'}}} \Big)\,, \quad t\in(0,T]\,.
\end{align}
Furthermore, estimate \eqref{SPDBound} can be upgraded by adding the dependence on the $L^{\infty}$-norms of the initial data in the constant,
\begin{equation}\label{SPDBound1}
\sup_{0\leq s\leq T}\|w(s)\|_{\infty} + \sup_{0\leq s \leq T} \|u(t)\|_{\infty} \leq C\big(m_o,\|w_o\|_{\infty},\|u_o\|_{\infty}\big)\,,
\end{equation}
The constant depend on \eqref{HypH}, \eqref{HSPD} and $\|\nabla p_o\|_{\infty}$, but, it is independent of the time.
\end{proposition}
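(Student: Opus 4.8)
The plan is to reproduce, step by step, the proof of Proposition~\ref{P1}, the only structural novelty being that the control of $\nabla p$ obtained there from elliptic regularity is now replaced by the heat-kernel estimates of Lemmas~\ref{L1Linf} and~\ref{L2Linf}. First I would record two uniform-in-time ingredients. From the second part of Proposition~\ref{SPDInt} applied with $\gamma=3>\sqrt2$ — whose hypotheses are precisely $(u_o,w_o,\nabla p_o)\in L^3\times L^{3^+}\times L^8$, i.e.\ those of the present statement — one gets $\sup_{0\le t\le T}\big(\|u(t)\|_3+\|w(t)\|_{3^+}\big)=:A<\infty$; interpolating with mass conservation controls $\|u(t)\|_r$ for $r\in[1,3]$ and $\|w(t)\|_r$ for $r\in[1,3^+]$. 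Feeding $f=Pw$ into Lemma~\ref{L1Linf}, estimate~\eqref{estV1} (in its damped form), with $n=2$, $q=8$ and $\theta=3/4$ — so that $q\theta/2=3$ and the two conditions $\theta q/2>1$, $\theta q/(2-\theta)=24/5>2$ hold — yields $\sup_{0\le t\le T}\|\nabla p(t)\|_8\le 2\|\nabla p_o\|_8+C(1+m_o)A^{9/4}=:A'<\infty$, also uniformly in $T$.

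For the bound~\eqref{SPDBound}, on the interval $(0,1]$ I would write the Duhamel formula for $w$ with base time $t/2$: $w(t)=K(t/2)\star w(t/2)+\int_{t/2}^{t}K(t-s)\star\big(-\nabla\!\cdot(w\nabla v)+uc-wN\big)(s)\,\ud s$. The first term is bounded in $L^\infty$ by $\|K(t/2)\|_{L^2}\|w(t/2)\|_{L^2}\le C\,t^{-1/2}A$, and this is exactly where the exponent $1/\sqrt t$ comes from, the dimension being $n=2$. The advective part of the source is treated as the term $I_3$ in the proof of Lemma~\ref{L2Linf}, integrating by parts and using $\|\nabla K(\tau)\|_{L^{\rho'}}\le C\tau^{-3/2+1/\rho'}$ for $\rho>2$ together with the uniform $L^3$ bound on $w$; the reaction part by $\|K(\tau)\|_{L^{\mu'}}\le C\tau^{-1/\mu}$ for $\mu>1$ and the uniform $L^3$ bounds on $u,w$. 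All the source contributions are $O(t^{c})$ with $c>0$, hence bounded on $(0,1]$, so $\|w(t)\|_\infty\le C(1+t^{-1/2})$ there; the same computation, with $\nabla v$ replaced by $\nabla p$, works for $u$, the point being that $u\,\nabla p\in L^{24/11}$ uniformly (from $u\in L^3$, $\nabla p\in L^8$) with $24/11>2$. The extension to all $t>0$ with a $T$-independent constant is then carried out by the time-translation argument of Proposition~\ref{P1} (Figure~\ref{Fig:TimeShift}), using that $A$, $A'$ and $m_o$ are translation invariant; this gives~\eqref{SPDBound}, and~\eqref{SPDinterpolation} follows by interpolating $\|u(t)\|_1\le m_o$ against~\eqref{SPDBound}.

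For the upgraded bound~\eqref{SPDBound1} I would additionally assume $u_o,w_o,\nabla p_o\in L^\infty$ and run a coupled bootstrap on the \emph{fixed} window $[0,2]$. Since $(u,w,p)$ is a classical solution, $X:=\sup_{[0,2]}\|u\|_\infty$ and $Y:=\sup_{[0,2]}\|w\|_\infty$ are finite. Lemma~\ref{L1Linf} with $q=\infty$, $\theta\in(0,2)$ and $f=Pw\in L^\infty\big(0,2;L^1\cap L^\infty\big)$ gives $\sup_{[0,2]}\|\nabla p\|_\infty\le 2\|\nabla p_o\|_\infty+C(1+m_o)Y^{1/2}$. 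Lemma~\ref{L2Linf} applied to $w$ (drift $\nabla v\in L^\infty$, source $uc$) gives $Y\le 2\|w_o\|_\infty+C+CX^{1/3}$, and applied to $u$ (drift $\nabla p\in L^\infty$, source $wN$; note that in $n=2$ the factor $\|\nabla v\|_\infty^{n}$ in~\eqref{estW} becomes $\|\nabla p\|_\infty^{2}$) gives $X\le 2\|u_o\|_\infty+C(1+m_o)\big(\|\nabla p_o\|_\infty+Y^{1/2}\big)^2+CY^{1/3}\le C(1+Y)$; the $T$-dependent constant $C^1_n$ of Lemma~\ref{L2Linf} is harmless because $T=2$ is fixed. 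Combining and using $X<\infty$ with Young's inequality to absorb the sublinear terms yields $X\le C$ with $C$ depending only on $\|u_o\|_\infty,\|w_o\|_\infty,\|\nabla p_o\|_\infty,m_o$ and~\eqref{HypH}--\eqref{HSPD}; $Y$ and $\sup_{[0,2]}\|\nabla p\|_\infty$ are bounded likewise. Finally, for $t\ge1$, the already-proved~\eqref{SPDBound} gives $\|u(t)\|_\infty+\|w(t)\|_\infty\le 2C(m_o)$ uniformly in $T$; patching this with the $[0,2]$ bound gives~\eqref{SPDBound1} for every $t\ge0$.

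The main obstacle I anticipate is part~\eqref{SPDBound1}: the bootstrap between $X$, $Y$ and $\sup\|\nabla p\|_\infty$ is genuinely circular and is admissible only because the solution is already known to be classical (hence bounded on each compact time interval), and one must moreover be careful that the final constant not depend on $T$ — which is why the argument is confined to a fixed window, neutralizing the $\ln(T+1)$ factor in Lemma~\ref{L2Linf}, and then matched with~\eqref{SPDBound} for $t\ge1$ rather than propagated by iterating time translations, whose constants would accumulate. A secondary, routine point is checking in the Duhamel step for $u$ that the Hölder exponents can be chosen so that $\nabla p$ is needed only in $L^8$ and $u$ only in $L^{r}$ with $r\le 3$ — exactly what Lemma~\ref{L1Linf} and Proposition~\ref{SPDInt} deliver uniformly in time.
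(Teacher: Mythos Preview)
Your proposal is correct, and for the upgraded bound \eqref{SPDBound1} it coincides with the paper's argument: the same three-way bootstrap via Lemmas~\ref{L1Linf} and~\ref{L2Linf}, producing the sublinear closure $X\le C_o+C_1X^{1/3}+C_2X^{1/9}$ in dimension $n=2$, run on a fixed time window to neutralise the $\ln(T+1)$ factor and then patched with \eqref{SPDBound} for large times. (One harmless slip: with $n=2$, $q=8$, $\theta=3/4$ in \eqref{estV1} the exponent on $\|f\|_{q\theta/2}$ is $9/16$, not $9/4$.)

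For \eqref{SPDBound} you diverge from the paper. The paper simply re-runs the De~Giorgi level-set machinery of Proposition~\ref{P1}, now fed with the uniform $L^{3}\times L^{3^+}$ control from Proposition~\ref{SPDInt} and the $L^{8}$ bound on $\nabla p$ (playing the role of~\eqref{eqA} and~\eqref{SobEmb}), which yields the $t^{-1/2}$ rate as in \eqref{Bound+1}--\eqref{Bound+2}. You instead write the Duhamel formula from base time $t/2$ and estimate each piece with Young's inequality for convolutions: the contribution $K(t/2)\star w(t/2)$ produces the $t^{-1/2}$ exactly, while the advective and reactive sources are integrable in $t-s$ thanks to the uniform $L^{3}$/$L^{3^+}$/$L^{8}$ bounds, and the time-translation device of Figure~\ref{Fig:TimeShift} globalises. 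Your route is more elementary and is arguably more in keeping with the heat-kernel viewpoint of Section~\ref{4}; the paper's choice has the advantage of reusing verbatim a mechanism already laid out in Section~\ref{3}. Either way, the key input is the same: the uniform-in-$T$ integrability from Proposition~\ref{SPDInt} and the resulting control of $\nabla p$ via Lemma~\ref{L1Linf}.
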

\proof
The proof of estimate \eqref{SPDBound} is a direct consequence of De Giorgi level set technique presented in the proof of Proposition \ref{P1} and Proposition \ref{SPDInt}.  The integrability of the initial data assures that the delicate nonlinear term $u\,\nabla p$ will remain uniformly bounded as required in these arguments. 
\\

Interestingly, the uniform bound \eqref{SPDBound1} is not straightforward to prove.  The main reason is the lack of knowledge about the regularity of $\Delta p$, which was a consequence of  elliptic regularity for  the FPD system.  
This is where we make use of Lemma~\ref{L2Linf}.  Indeed, let us apply Lemma \ref{L2Linf} to the population $u$, that is, $\varphi = u$, $f_{+}=w\,N$, and recalling that conservation of mass implies
\begin{equation*}
\sup_{0\leq s \leq T}\|w(s)\|_{1}+ \sup_{0\leq s \leq T}\|u(s)\|_{1} \leq 2m_o\,.
\end{equation*}
Thus, we get
\begin{align}\label{SPDLinf1}
\begin{split}
\sup_{0\leq s \leq T}&\|u(s)\|_{\infty} \leq 2\|u_o\|_{\infty} \\
&{}+C^{1}_{n}\sup_{0\leq s \leq T}\|\nabla p(s)\|^{n}_{\infty} + C^{2}_{n}\sup_{0\leq s \leq T}\|w(s)\|^{\frac{n-1}{n+1}}_{\infty}\,.
\end{split}
\end{align}
Now, apply Lemma \ref{L1Linf}, with $q=\infty$ and $\theta=2$, to the pheromone equation to obtain
\begin{equation}\label{SPDLinf2}
\sup_{0\leq s \leq T}\|\nabla p(s)\|_{\infty} \leq 2\|\nabla p_o\|_{\infty} + C^{3}_{n}\sup_{0\leq s \leq T}\|w(s)\|^{\frac{n-1}{n}}_{\infty}\,.
\end{equation}
Noticing that $c\leq c_o$, we may apply Lemma \ref{L2Linf} to the population $w$ as well to conclude that
\begin{equation}\label{SPDLinf3}
\sup_{0\leq s \leq T}\|w(s)\|_{\infty} \leq 2\|w_o\|_{\infty} +
C^{4}_{n}\|\nabla v\|^{n}_{\infty} + C^{5}_{n}\sup_{0\leq s \leq T}\|u(s)\|^{\frac{n-1}{n+1}}_{\infty}\,.
\end{equation}
In addition to the mass, the constants depend on the $L^{\infty}$-norms of the parameters and grow logarithmically in time.  Plugging successively \eqref{SPDLinf3} in \eqref{SPDLinf2} and the result in \eqref{SPDLinf1} the estimate for the population $u$ reduces to
\begin{equation}\label{SPDLinf4}
\sup_{0\leq t\leq T}\|u(t)\|_{\infty} \leq C_o + C_{1} \sup_{0\leq s\leq T}\|u(s)\|^{\frac{(n-1)^{2}}{n+1} }_{\infty} + C_{2} \sup_{0\leq s\leq T}\|u(s)\|^{\frac{(n-1)^{2}}{(n+1)^{2}} }_{\infty}
\end{equation}
where $C_o$ depends on the initial data and $\|\nabla v\|_{\infty}$.  Since $\frac{1}{3}=\frac{(n-1)^{2}}{n+1}<1$ the result follows for short $T>0$ with constants growing at most $\ln(T)$.  For large $T>0$ the result follows after simple interpolation with estimate \eqref{SPDBound}.
\endproof

\endproof

\begin{remark}\label{Ren=3}
The maximal exponent in the right side of \eqref{SPDLinf4} $\frac{(n-1)^{2}}{n+1}\geq1$ for $n\geq 3$.  This suggest, for these dimensions, a finite in time Dirac concentration of mass similar to that of the Keller-Segel model in two or more dimensions.  Similarly, such concentration could be avoided by smallness conditions on the initial data and model parameters.
\end{remark}

\appendix
\section{A useful comparison lemma}
\begin{lemma}[ODE comparison]\label{A1} Assume $Y$ and $X$ are absolutely continuous functions in $[0,T]$ and such that
\begin{align}
\label{supersub}
\begin{split}
Y'(t)+ a\,Y^{\alpha}(t)&\geq b + \delta + c\Big(1+\frac{1}{t^{\gamma}}\Big)\sup_{\tau \leq s \leq t}Y^{\alpha_o}(s)\\[0.1cm]
X'(t)+ a\,X^{\alpha}(t)&\leq b + c\Big(1+\frac{1}{t^{\gamma}}\Big)\sup_{\tau \leq s \leq t}X^{\alpha_o}(s) \,,
\end{split}
\end{align}
with  $b\geq0$, $c\geq0$, $a>0$, $\delta>0$, $\alpha>\alpha_o\geq0$, $\gamma\geq0$ and $t\geq \tau \geq 0$.  If $Y(0) > X(0)$ then $Y\geq X$ in $[0,T]$.  In particular, if $\gamma=0$
\begin{equation}\label{subest}
\sup_{t\in[0,T]}X(t) \leq \max\{X(0), C\}\,,
\end{equation}
where the constant $C>0$ depends on all parameters but $\tau$, $\delta$ and $T$.
\end{lemma}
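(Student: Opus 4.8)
The first assertion is a comparison (ordering) statement, and the plan is to prove it by the standard device of examining the first time at which $Y$ and $X$ could meet; the only non-standard ingredient, the memory term $\sup_{\tau\le s\le t}(\cdot)^{\alpha_o}$, turns out to be harmless because it respects the ordering $Y\ge X$. The bound in the second assertion I would then obtain from the first one by inserting an explicit \emph{constant} function as a supersolution. Throughout I treat $X,Y$ as nonnegative (as in all the applications in the paper), so that the powers make sense and $s\mapsto s^{\alpha_o},\,s\mapsto s^{\alpha}$ are non-decreasing on $[0,\infty)$; for clarity I would first carry out the argument for $\tau=0$, the general case being identical once one starts at $t=\tau$.

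\emph{Step 1 (ordering).} Since $X,Y$ are continuous on $[0,T]$ they are bounded, $0\le X,Y\le R$. I would argue by contradiction: assuming $E:=\{t\in[0,T]:Y(t)\le X(t)\}\neq\emptyset$, set $t^\star:=\inf E$; from $Y(0)>X(0)$ and continuity one gets $t^\star\in(0,T]$, $Y(t^\star)=X(t^\star)=:v$, and $D:=Y-X>0$ on $[0,t^\star)$. On $[0,t^\star]$ one has $Y\ge X\ge0$, hence $Y^{\alpha_o}\ge X^{\alpha_o}$ there, and therefore $\sup_{0\le s\le t}Y^{\alpha_o}(s)\ge\sup_{0\le s\le t}X^{\alpha_o}(s)$ for every $t\le t^\star$. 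Subtracting the two differential inequalities and using this together with $b,c\ge0$ should give, for a.e.\ $t\in(0,t^\star]$,
\[
D'(t)\ \ge\ a\big(X^\alpha(t)-Y^\alpha(t)\big)+\delta .
\]
The plan is then to reach a contradiction near $t^\star$. If $v>0$, continuity provides $\eta\in(0,t^\star)$ with $X(t),Y(t)\in[v/2,R]$ on $[t^\star-\eta,t^\star]$, where $s\mapsto s^\alpha$ is Lipschitz with some constant $L$, so $|X^\alpha-Y^\alpha|\le L\,D$; shrinking $\eta$ so that $aLD\le\delta/2$ there (possible since $D(t^\star)=0$) yields $D'\ge\delta/2$ a.e., hence $0=D(t^\star)\ge D(t^\star-\eta)+\tfrac{\eta\delta}{2}>0$, a contradiction. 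If $v=0$, then $Y(t^\star)=0$ and, since $Y$ is continuous and $\alpha>0$, one can pick $\eta$ with $aY^\alpha\le\delta/2$ on $[t^\star-\eta,t^\star]$; the inequality $Y'+aY^\alpha\ge b+\delta\ge\delta$ then gives $Y'\ge\delta/2$ a.e.\ there and $0=Y(t^\star)\ge Y(t^\star-\eta)+\tfrac{\eta\delta}{2}>0$, again a contradiction. Hence $E=\emptyset$, i.e.\ $Y\ge X$ on $[0,T]$.

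\emph{Step 2 (the bound for $\gamma=0$).} Since $\alpha>\alpha_o\ge0$, I would fix $C>0$, depending only on $a,b,c,\alpha,\alpha_o$, with $a\,m^\alpha\ge b+2c\,m^{\alpha_o}+1$ for all $m\ge C$ (for $c=0$ this is $am^\alpha\ge b+1$; for $c>0$ it holds once $C$ is large, as $m\mapsto am^\alpha-2cm^{\alpha_o}-b$ tends to $+\infty$ and is eventually non-decreasing). Setting $M_0:=\max\{X(0),C\}$ and, for $\varepsilon>0$, taking the constant function $Y_\varepsilon\equiv M_0+\varepsilon$, one checks that with $\gamma=0$ the hypothesis required on $Y_\varepsilon$ in Step~1, namely $a(M_0+\varepsilon)^\alpha\ge b+\delta'+2c(M_0+\varepsilon)^{\alpha_o}$, holds with $\delta'=\tfrac12$ for all small $\varepsilon$ (because $aM_0^\alpha\ge b+2cM_0^{\alpha_o}+1$), while $Y_\varepsilon(0)=M_0+\varepsilon>X(0)$. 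Step~1 then gives $X\le M_0+\varepsilon$ on $[0,T]$, and letting $\varepsilon\downarrow0$ produces $\sup_{[0,T]}X\le\max\{X(0),C\}$ with $C$ independent of $\tau$, $\delta$ and $T$.

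I expect no genuine difficulty; the two points that require care are that the memory term is non-local in time — which is dealt with by the monotonicity observation, making it enter $D'=Y'-X'$ with the right sign — and that $X,Y$ are merely absolutely continuous, so the first-contact argument must be run in integrated form and split according to whether $X$ and $Y$ would meet at a positive value or at $0$.
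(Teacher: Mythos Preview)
Your proof is correct and follows the same overall strategy as the paper: a first-crossing-time argument for the comparison, then a constant supersolution for the bound when $\gamma=0$.

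There is one small tactical difference worth noting. You work on the interval $[t^\star-\eta,t^\star]$ \emph{before} the first contact time; there the ordering $Y\ge X$ gives the memory term the right sign for free, but the term $a(X^\alpha-Y^\alpha)$ has the wrong sign, which forces your case split $v>0$ versus $v=0$ and the Lipschitz/smallness argument. The paper instead works on a short interval $(t_o,t_o+\varepsilon)$ \emph{after} the first contact, where $X>Y$: then $a(X^\alpha-Y^\alpha)\ge0$ comes for free, while the memory term may go slightly the wrong way and must be absorbed into the slack $\delta$ by continuity (choosing $t_\star$ close enough to $t_o$). Either choice works; yours is arguably cleaner on the memory term, the paper's avoids the case split. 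In Step~2 both arguments insert a constant supersolution and let the perturbation parameter tend to zero; your choice of $C$ via $a m^\alpha\ge b+2cm^{\alpha_o}+1$ is equivalent to the paper's explicit $C_\delta$ obtained through Young's inequality.
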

\begin{proof}
Define
\begin{equation*}
t_o:=\sup\big\{ t: Y(s)\geq X(s),\;s\in[0,t] \big\}\,.
\end{equation*}
Note that $t_{o}>0$ since $Y(0)>X(0)$.  Let us argue by contradiction assuming that there exists $t_{1}\in(0,T]$ such that $X(t_{1})>Y(t_{1})$.  Since $X$ and $Y$ are continuous $t_o\in(0,T)$ and $X(t_o)=Y(t_o)$.  Also, there exists interval $I=(t_{o},t_{o}+\varepsilon)$ such that $X(t)>Y(t)$ for any $t\in I$.  Thus, the fundamental theorem of calculus implies
\begin{equation*}
\int^{t}_{t_o}X'(s)\ud s = X(t) - X(t_o) > Y(t) - Y(t_o) = \int^{t}_{t_o}Y'(s)\ud s\,,\quad t\in I\,.
\end{equation*}
Therefore, there exists $t_{\star}\in I$ sufficiently close to $t_o$ such that: (1) $X$ and $Y$ are differentiable at $t_{\star}$ with $X'(t_{\star})>Y'(t_{\star})$, and (2) $c\big(1+\frac{1}{t^{\gamma}_{*}}\big)\big(\sup_{\tau \leq s \leq t_{*}}Y^{\alpha_o}(s) - \sup_{\tau \leq s \leq t_{*}}X^{\alpha_o}(s)\big) \geq -\delta$.  Thus, using \eqref{supersub}
\begin{align*}
0 > Y'(t_{\star}) & - X'(t_{\star}) \geq a\big( X^{\alpha}(t_{\star}) - Y^{\alpha}(t_{\star}) \big)  \\
&+\delta + c\Big(1+\frac{1}{t^{\gamma}_{*}}\Big)\Big(\sup_{\tau \leq s \leq t_{*}}Y^{\alpha_o}(s) - \sup_{\tau \leq s \leq t_{*}}X^{\alpha_o}(s)\Big) \geq 0\,.
\end{align*}
This contradicts the existence of $t_{1}$.  Finally, the estimate \eqref{subest} follows by taking $Y:=Y_{\delta}$ as the constant function in $[0,T]$ given by $\max\{X(0)+\delta,C_{\delta}\}$ where
\begin{equation*}
C_{\delta} = \frac{2}{a}\Big(b+\delta+\frac{1}{\varepsilon^{(\alpha/\alpha_o)'}}\Big)\,,\quad \varepsilon^{\alpha/\alpha_o} = \frac{a}{4c} \,.
\end{equation*}
Then, $X(t) \leq Y_{\delta}$ for any $\delta>0$.  The result follows by sending $\delta\rightarrow0$.
\end{proof}
\begin{corollary}\label{A2}
Assume $X$ be an absolutely continuous function in $[0,T]$ such that
\begin{equation*}
X' + a\,X^{\alpha}\leq b + c\Big(1+\frac{1}{t^{\gamma}}\Big)\sup_{\tfrac{t}{2} \leq s \leq t}X^{\alpha_o}(s)\,,
\end{equation*}
with  $b\geq0$, $c\geq0$, $a>0$, $\alpha>\alpha_o\geq0$ and $\gamma\geq0$.  Then, 
\begin{equation*}
X(t)\leq C\Big(1+\frac{1}{t^{\beta}} \Big)\,,\quad \beta=\max\big\{\tfrac{1}{\alpha-1},\tfrac{\gamma}{\alpha-\alpha_o}\big\}\,.
\end{equation*}
The constant $C>0$ depends on all parameters but it is independent of $T$.  
\end{corollary}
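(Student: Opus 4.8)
The plan is to prove the bound by comparison with an explicit barrier. I would set
\[
Y(t):=C\big(1+t^{-\beta}\big),\qquad \beta=\max\Big\{\tfrac{1}{\alpha-1},\ \tfrac{\gamma}{\alpha-\alpha_o}\Big\}
\]
(so that implicitly $\alpha>1$, as in all the applications), and show that for $C$ large enough, depending only on $a,b,c,\alpha,\alpha_o,\gamma$, the function $Y$ is a supersolution:
\[
Y'(t)+a\,Y^{\alpha}(t)\ \ge\ b+c\Big(1+\tfrac{1}{t^{\gamma}}\Big)\sup_{t/2\le s\le t}Y^{\alpha_o}(s),\qquad t>0.
\]
A comparison argument mimicking the proof of Lemma~\ref{A1}, but modified to accommodate the blow-up of $Y$ at $t=0$, then gives $X\le Y$ on $(0,T]$, which is the claim; the constant $C$ is manifestly independent of $T$.

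To verify the supersolution property, note $Y$ is decreasing, so $\sup_{t/2\le s\le t}Y^{\alpha_o}(s)=Y^{\alpha_o}(t/2)\le (2^{\beta}C)^{\alpha_o}(1+t^{-\beta})^{\alpha_o}$. Writing $x=t^{-\beta}$, one has $-Y'=C\beta\,x^{1+1/\beta}$, $aY^{\alpha}=aC^{\alpha}(1+x)^{\alpha}$ and $t^{-\gamma}=x^{\gamma/\beta}$. The two choices encoded in $\beta$ are exactly what makes the exponents close up: $\beta\ge\frac1{\alpha-1}$ gives $1+\frac1\beta\le\alpha$, hence $x^{1+1/\beta}\le(1+x)^{\alpha}$, while $\beta\ge\frac{\gamma}{\alpha-\alpha_o}$ gives $\frac{\gamma}{\beta}+\alpha_o\le\alpha$, hence $\big(1+x^{\gamma/\beta}\big)(1+x)^{\alpha_o}\le 2(1+x)^{\alpha}$. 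The desired inequality therefore reduces to the homogeneous bound $C\big(aC^{\alpha-1}-\beta\big)(1+x)^{\alpha}\ge b+2c\,2^{\alpha_o\beta}C^{\alpha_o}(1+x)^{\alpha}$, valid for all $x>0$ once $C\ge\max\{(2\beta/a)^{1/(\alpha-1)},(4b/a)^{1/\alpha},(8c\,2^{\alpha_o\beta}/a)^{1/(\alpha-\alpha_o)}\}$ (using $(1+x)^{\alpha}\ge1$ to absorb the constant $b$).

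For the comparison step: since $X$ is continuous hence bounded on $[0,T]$ while $Y(t)\to+\infty$ as $t\to0^+$, we have $Y>X$ on some $(0,\eta)$; put $t_o:=\sup\{t\le T:\ Y>X\text{ on }(0,t)\}>0$. If $t_o<T$ then $Y(t_o)=X(t_o)$ and $X>Y$ immediately to the right of $t_o$, so there is $t_\star$, as close to $t_o$ as we like, at which $X$ and $Y$ are differentiable, $X(t_\star)>Y(t_\star)$ and $X'(t_\star)>Y'(t_\star)$. For $t_\star$ close enough to $t_o$ (so $t_\star<2t_o$) one still has, using continuity of $X$ together with $Y$ decreasing and $Y(t_o)=X(t_o)$, that $\sup_{t_\star/2\le s\le t_\star}X^{\alpha_o}(s)\le Y^{\alpha_o}(t_\star/2)=\sup_{t_\star/2\le s\le t_\star}Y^{\alpha_o}(s)$. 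Subtracting the sub- and supersolution inequalities at $t_\star$ then yields $Y'(t_\star)-X'(t_\star)\ge a\big(X^{\alpha}(t_\star)-Y^{\alpha}(t_\star)\big)+0>0$, because $z\mapsto z^{\alpha}$ is strictly increasing on $[0,\infty)$; this contradicts $X'(t_\star)>Y'(t_\star)$. Hence $t_o=T$, i.e. $X(t)\le C(1+t^{-\beta})$ for all $t\in(0,T]$.

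I expect the main obstacle to be pinning down the barrier, i.e. checking that $\beta=\max\{1/(\alpha-1),\gamma/(\alpha-\alpha_o)\}$ is precisely the exponent that balances $Y'$ against $aY^{\alpha}$ and, simultaneously, balances the loss $2^{\alpha_o\beta}$ coming from taking the supremum over $[t/2,t]$ against $aY^{\alpha}$ again. The comparison argument itself is routine and, unlike in Lemma~\ref{A1}, needs no auxiliary slack $\delta$: strict monotonicity of $z\mapsto z^{\alpha}$ already delivers the strict inequality, and the degenerate behaviour at $t=0$ is taken care of by the blow-up of the barrier rather than by matching initial data.
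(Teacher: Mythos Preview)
Your proposal is correct and follows essentially the same route as the paper: both construct the identical barrier $Y(t)=C(1+t^{-\beta})$, verify it is a supersolution for $C$ large, and conclude by comparison. The paper is terser --- it simply asserts the supersolution property and then, rather than arguing directly, invokes Lemma~\ref{A1} on $[s,T]$ for small $s$ (using that $Y(s)>\sup_{[0,T]}X$ since $Y$ blows up at $0$) and lets $s\to 0$. You instead reproduce the comparison argument in place, and you give the explicit computation checking that $\beta$ is exactly the exponent making $Y'$ and the nonlocal term both dominated by $aY^{\alpha}$; the paper omits this.

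One small wrinkle in your comparison step: with $t_o=\sup\{t:\,Y>X\text{ on }(0,t)\}$ defined via the \emph{strict} inequality, the assertion ``$X>Y$ immediately to the right of $t_o$'' need not hold (one could have $X=Y$ on a right interval). This is fixed either by using $Y\ge X$ in the definition, or --- more cleanly --- by starting from a contradiction hypothesis $X(t_1)>Y(t_1)$, setting $t_o=\sup\{t\le t_1:\,X\le Y\text{ on }(0,t]\}$, picking $s_n\downarrow t_o$ with $X(s_n)>Y(s_n)$, letting $r_n=\sup\{r\in[t_o,s_n]:\,X(r)\le Y(r)\}$, and choosing $t_\star\in(r_n,s_n)$ where $X>Y$ and $X'>Y'$ on a set of positive measure. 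With that tweak your argument is complete, and your observation that the auxiliary slack $\delta$ of Lemma~\ref{A1} is not needed here (strict monotonicity of $z\mapsto z^{\alpha}$ together with the strict decrease of $Y$ suffice) is correct.
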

\begin{proof}
First note that the function
\begin{equation*}
Y(t)=C\Big(1+\frac{1}{t^{\beta}} \Big)
\end{equation*}
satisfies $Y'(t)+ a\,Y^{\alpha}(t)\geq b + \delta + \big(1+\frac{1}{t^{\gamma}}\big)\sup_{\frac{t}{2} \leq s \leq t}Y^{\alpha_o}(s)$ for $C$ sufficiently large depending on all parameters.  Second, since $X$ is bounded on $[0,T]$, there exists sufficiently small $t_o>0$ such that $Y(s)>\sup_{t\in[0,T]} X(t)$ for $s\in(0,t_o)$.  Applying Lemma \ref{A1} in the interval $[s,T]$ it follows that $Y\geq X$ in $[s,T]$.  The result follows since $s$ can be taken arbitrarily small.
\end{proof}

\section*{Acknowledgements}

We acknowledge support form the Brazilian-French Network in Mathematics, which has made
possible a visit in Rio de Janeiro where a large part of this work has been done. Th. G. thanks
both the Math. Dept. at PUC and IMPA for their warm welcome. P.A.~was partially supported by FAPERJ grant no. APQ1 - 111.400/2014  and CNPq grant no. Universal - 442960/2014-0.

\bibliographystyle{plain}
\bibliography{./AAG}
\end{document}